\definecolor{bgcolor}{rgb}{0.8,1,1}
\definecolor{bgcolor2}{rgb}{0.8,1,0.8}
\newcommand{\myred}[1]{{\color{red}#1}}
\newcommand{\argmin}{\mathop{\arg\!\min}}
\newtheorem{assumption}{Assumption}
\newcommand{\eqdef}{\vcentcolon=}
\newcommand{\cO}{\mathcal{O}}
\DeclareMathOperator{\dom}{\mathrm{dom}}
\def\<#1,#2>{\langle #1,#2\rangle}
\begin{document} 
\title{Method with Batching 
for
Stochastic 
\\
Finite-Sum Variational
Inequalities 
\\
in Non-Euclidean Setting\thanks{The work of A. Pichugin and M. Pechin was supported by a grant for research centers in the field of artificial intelligence, provided by the Analytical Center for the Government of the Russian Federation in accordance with the subsidy agreement (agreement identifier 000000D730321P5Q0002) and the agreement with the Moscow Institute of Physics and Technology dated November 1, 2021 No. 70-2021-00138.}}
\titlerunning{Method with Batching for Stochastic VIs in Non-Euclidean Setting}
%
\author{Alexander Pichugin\inst{1}\and
Maksim Pechin\inst{1}\and
Aleksandr Beznosikov\inst{1,2,3} \and
Vasilii Novitskii\inst{1}\and 
Alexander Gasnikov\inst{3,1,2}
}
\authorrunning{M. Pechin, A. Pichugin, A. Beznosikov, A. Gasnikov}
%
\institute{Moscow Institute of Physics and Technology, Moscow, Russia, \and
Ivannikov Institute for System Programming of RAS, Moscow, Russia \and
Innopolis University, Innopolis, Russia
}
\maketitle              
\begin{abstract}
Variational inequalities are a universal optimization paradigm that incorporate classical minimization and saddle point problems. Nowadays more and more tasks require to consider stochastic formulations of optimization problems. In this paper, we present an analysis of a method that gives optimal convergence estimates for monotone stochastic finite-sum variational inequalities. In contrast to the previous works, our method supports batching, does not lose the oracle complexity optimality and uses an arbitrary Bregman distance to take into account geometry of the problem. Paper provides experimental confirmation to algorithm's effectiveness.
\keywords{stochastic optimization, variational inequalities, finite-sum problems, batching, Bregman distance}
\end{abstract}

\section{Introduction}
In this paper, we consider the following variational inequality (VI) problem: 
\begin{equation}
    \label{eq:VI}
    \text{Find } x^* \in \mathcal{X} \text{ such that } \langle F(x^*), x - x^* \rangle + g(x) - g(x^*) \geq 0, \text{ for all } x \in \mathcal{X},
\end{equation}
where $F$ is some operator, $g$ is a proper convex lower semicontinuous function with domain $\operatorname{dom} g$. Such problem is the classical formulation of a variational inequality  \cite{stampacchia1964formes}. Moreover, we use the composite scheme for which $g$ is responsible. These kinds of variational inequalities have a wide usage.

\textbf{Application of variational inequalities.} 
Variational inequalities found significant implementations  across various disciplines.  In particular, they are used for classical problems such as equilibrium theory, games and economics \cite{NeumannGameTheory1944,HarkerVIsurvey1990,VIbook2003,rudeva2007variational}. 
Later, the machine learning community discovered a wide range of implications such as reinforcement learning \cite{omidshafiei2017deep,jin2020efficiently}, GANs (notably, Wasserstein GANs \cite{arjovsky2017wasserstein}), adversarial training \cite{madry2017towards}, supervised learning, unsupervised learning, discriminative clustering \cite{joachims2005support}, matrix factorization \cite{bach2008convex} and robust optimization \cite{ben2009robust}.
Some of these problems specfically require to utilize a stochastic setup of variational inequalities \cite{madry2017towards,sadiev2023high,beznosikov2023bregman}. Also stochastic variational inequalities are an instrumental in addressing several fundamental challenges, including balance problems in finances, management, and engineering contexts \cite{shapiro2021lectures,sun2021two,you2023prediction}.

\textbf{Our contribution and related works.} The simplest method for solving variational inequalities is the classical Gradient method. Unfortunately, it converges only for the cases of strongly monotone $F$, in the non-strongly monotone case convergence is lacking \cite{chavdarova2022continuous}. 
A big improvement in algorithms for VIs happened when the Extragradient method was proposed \cite{korpelevich1976extragradient}. It used the idea of the extrapolation for the Gradient method. Remarkably, it solved the convergence issue of the Gradient method.
Another big step was done in \cite{popov1980modification}, where the Optimistic method was proposed.
It is worth noticing a significant difference in these two methods, as Extragradient required us to call the oracle twice, while the algorithm from \cite{popov1980modification} is a single-call scheme. Later, A. Nemirovsky proposed exploiting the Bregman divergence for variational inequalities \cite{nemirovski2004prox}. This approach allowed to take into account generalized geometry that could be non-Euclidean. As the result, Mirror Prox algorithm was created. Next, there was a number of stochastic modifications of Extragradient and Mirror Prox.

Juditsky et al. \cite{juditsky2011solving} for the first time considered the stochastic version of Mirror Prox. This paper examines the general stochastic case with bounded variance. Another approach would be to consider a frequently used in practice finite-sum stochastic setup of the problem, where we can achieve better convergence rate due to its specifics. In particular, one of the earliest solution only for the saddle point problem in the Euclidean setup was considered by B. Palaniappan and F. Bach
\cite{palaniappan2016stochastic}. They based their method on the SVRG (Stochastic Variance reduction) algorithm \cite{johnson2013accelerating} and added the Catalyst envelope acceleration. As mentioned, the result was made for strongly convex-concave saddle point problems, which corresponds to strongly monotone variational inequations. Other algorithms utilizing the variance reduction technique (VR) for solving finite-sum variational inequalities with strongly monotone operators with better convergence rate are L-SVRGDA \cite{beznosikov2023stochastic}  and SVRE \cite{chavdarova2019reducing}. The idea of SVRE was continued in \cite{yang2020global} with the introduction of VR-AGDA. Here results were obtained in Polyak-Lojasiewicz conditions. Moreover, in \cite{tominin2021accelerated}, strongly convex-concave saddle point problems were considered. In this work, methods were designed with the Catalyst envelope acceleration. Achieved rate is comparable to \cite{palaniappan2016stochastic}.
In addition, it is worth mentioning the work \cite{carmon2019variance}. This paper deals with the Bregman setup and adapts the variance reduction technique. Unfortunately, this solution is limited by additional assumptions on the operator $F$ and by considering the matrix games setup. Either papers that friutfully implement VR techniques suitable for finite-sum setup are \cite{NIPS2014_ede7e2b6,allen2016improved,allen2018katyusha}; among others in the non-Euclidean case \cite{lan2019unified}. 
Malitsky and Alacaoglu in \cite{alacaoglu2021stochastic} applied variance reduction to Mirror Prox using double loop SVRG technique \cite{allen2016improved}.

Works above have one common problem -- proposed algorithms are sensitive to the size of mini-batches, apart from paper \cite{juditsky2011solving}. However, convergence rate in \cite{juditsky2011solving} does not reach the lower bound \cite{han2021lower}, because the general case is considered.
There are papers solve this problem for the strongly monotone operators \cite{kovalev2022optimal} and for the monotone operators \cite{pichugin2024optimal} in the Euclidean setup .

We compare the mentioned algorithms and their convergence results in Table \ref{tab:comparison}.
\renewcommand{\arraystretch}{2}
\begin{table}
    \centering
    \caption{Summary complexities for finding an $\varepsilon$-solution for monotone stochastic (finite-sum) variational inequality \eqref{eq:VI}+\eqref{eq:sum} with Lipschitz operators. Convergence is measured by gap function. {\em Notation:} $L$ and $L_2$ are Lipschitz constants for $F$ and $F_{m}$ in terms of $\|\cdot\|_*$ and $\|\cdot\|_2$ norms, respectively (see Assumptions \ref{ass3a}, \ref{ass3b}), $M$ = size of dataset, $b$ = batch size per iteration.}
    \label{tab:comparison}
    \small
  \begin{threeparttable}
    \begin{tabular}{|c|c|c|c|}
    \hline
    \quad\quad\quad\quad\quad\quad \textbf{Reference } \quad\quad\quad\quad\quad\quad & \textbf{Complexity} & \textbf{Non-Euclidean}
    \\\hline
    Nemirovski et al. \cite{nemirovski2004prox}\tnote{{\color{blue}(1)}} & $\mathcal{O} \left( \myred{M} \frac{L}{\varepsilon} \right)$  & \checkmark \\\hline
    Juditsky et al. \cite{juditsky2011solving} \tnote{\color{blue}(4)} & $\mathcal{O}\left( \frac{L}{\varepsilon} + \frac{1}{\varepsilon^\myred{2}} \right)$ & $\checkmark$ \\\hline
    Palaniappan \& Bach  \cite{palaniappan2016stochastic} \tnote{{\color{blue}(2,3,4)}} & $\mathcal{\tilde O} \left( \myred{b}\frac{L_2^2}{\varepsilon^\myred{2}}\right)$ & \\\hline
    Palaniappan \& Bach  \cite{palaniappan2016stochastic} \tnote{{\color{blue}(2,3,4)}}  & $\mathcal{\tilde O} \left( \sqrt{\myred{b}M}\frac{L_2}{\varepsilon} \right)$ &    \\\hline
    Chavdarova et al. \cite{chavdarova2019reducing} \tnote{{\color{blue}(3)}} & $\mathcal{\tilde O}\left(   \frac{\myred{b} L_2^{\textcolor{red}{2}}}{\varepsilon^\myred{2}}  \right)$ &   \\\hline
    Carmon et al. \cite{carmon2019variance} \tnote{\color{blue}(2,4)} & $\mathcal{\tilde O} \left( \sqrt{\myred{b}M}\frac{L}{\varepsilon} \right)$ & \checkmark \\\hline
    Yang et al. \cite{yang2020global} \tnote{\color{blue}(2,3,4)} & $\mathcal{\tilde O}\left( \myred{b^\frac{1}{3}M^\frac{2}{3}}\frac{L_2^{\textcolor{red}{3}}}{\varepsilon^\myred{3}}  \right)$ &  \\\hline
    Alacaoglu \& Malitsky \cite{alacaoglu2021stochastic} & $\mathcal{O} \left( \sqrt{\myred{b}M}\frac{L}{\varepsilon}\right)$ & \checkmark \\\hline
    Tominin et al. \cite{tominin2021accelerated} \tnote{\color{blue}(2,3,4)} & $\mathcal{\tilde O}\left( \sqrt{\myred{b}M}\frac{L_2}{\varepsilon} \right)$ &   \\\hline
    Beznosikov et al. \cite{beznosikov2023stochastic} \tnote{\color{blue}(3,4)} & $\mathcal{\tilde O}\left( \myred{b}\frac{L_2^{\textcolor{red}{2}}}{\varepsilon^\myred{2}} \right)$ &   \\\hline
    Kovalev et al. \cite{kovalev2022optimal} \tnote{\color{blue}(3,4)}  & $\mathcal{\tilde O}\left( \sqrt{M}\frac{L_2}{\varepsilon} \right)$ &  \\\hline
    
    Pichugin et al. \cite{pichugin2024optimal} \tnote{\color{blue}(5)} & $\mathcal{\tilde O}\left(   \sqrt{M}\frac{L_2}{\varepsilon}\right)  $ &    \\\hline
    \cellcolor{bgcolor2}{This paper} \tnote{\color{blue}(5)} & \cellcolor{bgcolor2}{$\mathcal{O} \left( \sqrt{M}\frac{L_2}{\varepsilon} \right)$} & \cellcolor{bgcolor2}{\checkmark}\\ \hline
    \cellcolor{bgcolor2}{This paper} \tnote{\color{blue}(5)} & \cellcolor{bgcolor2}{$\mathcal{\tilde O} \left( \sqrt{M}\frac{L}{\varepsilon} \right)$} & \cellcolor{bgcolor2}{\checkmark} \\\hline \hline
    Han et al.(lower bounds) \cite{han2021lower} & $\Omega \left( \sqrt{M}\frac{L_2}{\varepsilon}\right)$ & 
    \\\hline 
    \end{tabular}   
    \begin{tablenotes}
    \scriptsize
    \item   
    {\color{blue}(1)} deterministic methods, similar results were also obtained in \cite{tseng2000modified,mokhtari2020unified},
    \\{\color{blue}(2)} for saddle point problems only,
    \\{\color{blue}(3)} only for $\mu$-strongly monotone operators. To compare algorithms, we apply regularization trick,
    \\{\color{blue}(4)} only for bounded domain,
    \\{\color{blue}(5)} for $b \leq \sqrt{M}$.
\end{tablenotes}    
    \end{threeparttable}
\end{table}

In the present paper, our approach is a natural extension of the results from \cite{pichugin2024optimal}. On the contrary, here we consider an arbitrary non-Euclidean space. Correspondingly, our method is based on the following ideas: optimistic scheme \cite{popov1980modification} with negative momentum \cite{kovalev2022optimalvi} to deal with variance reduction and avoid double update like in Extragradient; on double-loop VR scheme to taking into account the Bregman divergence \cite{alacaoglu2021stochastic}. The oracle complexity of the new method is independent of batching, as long as the size of the batch does not exceed the square root of the full sample size.

\section{Main part}
In this paper, we assume that $\mathcal{X}\subseteq \mathbb{R}^n$ is a normed vector space with a dual space $\mathcal{X}^*$ and primal-dual norm pair $\| \cdot \|$ and $\| \cdot  \|_*$. As $\|\cdot \|$, we are using the norm $\|\cdot \|_p$ for $p\in [1,2]$.

We expect a stochastic nature of a problem: $F(x) = \mathbb{E}_{\xi \sim \mathcal{D}}[F_{\xi}(x)]$. As noted above, due to the fact that distribution $\mathcal{D}$ is non-trivial or even unknown, which appears in a number of applications, we can represent this as finite-sum approximation:
\begin{equation}
    \label{eq:sum}
    F(x) = \frac{1}{M} \sum\limits_{m=1}^M F_m (x),
\end{equation}
using the Monte-Carlo approach \cite{leluc2023monte}. In machine learning problems, the term “empirical risk” is often encountered. Note that calls of the full operator are expensive in practice. Thus, in order to avoid frequent computing operator $F$, one can use calls of single $F_m$ or mini-batches of them.

Desire to work in arbitrary geometry requires us to introduce an alternative way to mesure distance on the space rather than default Euclidean distance.

\begin{definition} Let $h:\; \mathcal{X}\xrightarrow{}\mathbb{R}\cup \{ +\infty\}$ be a convex lower semicontinuous function such that $\operatorname{dom} g \subseteq \operatorname{dom} h$ and $h$ is differentiable on $\operatorname{dom} h$, $h$ is 1-strongly convex on $\operatorname{dom} g$. We define the Bregman distance $V$: $\operatorname{dom} g \times \operatorname{dom} \partial h\xrightarrow{} \mathbb{R}_+$ associated with $h$ by
\begin{align*}
    V(u, v) = h(u)-h(v)-\langle \nabla h(v), u-v \rangle.
\end{align*}
\end{definition}                   
Here we used the following definition.
\begin{definition}
Function $f$ is 1-strongly convex if it can be lower bounded by a quadratic function of the form:
\begin{align*}
    f(y) \geq f(x) + \left\langle y-x, \nabla f(x) \right\rangle + \frac{1}{2}\|y-x\|^2
\end{align*}
for all $x$ and $y$ in $\dom f$.
\end{definition}
We also provide a couple of examples of $h$ and their corresponding $V$:
\begin{itemize}
    \item For $h(x) = \frac{1}{2}\|x\|_2^2$ we have $V(x, y) = \frac{1}{2}\|x-y\|_2^2$.
    \item The entropy function 
    \begin{align}
        h(x) = \sum_{i=1}^n x_i \log x_i \label{entropy}
    \end{align} in a probabilistic simplex 
    \begin{align}
        \Delta^n = \{ x\in \mathbb{R}^n\; | x_i\geq 0\; \sum_{i=1}^n x_i = 1\} \label{simplex}
    \end{align} generates KL-divergence
    \begin{align*}
        V(x,y) = \sum\limits_{i=1}^n x_i \operatorname{log}\frac{x_i}{y_i}.
    \end{align*}
\end{itemize}

Let us highlight the remarkable property of the Bregman distance which we use for the analyzis of convergence of our algorithm. Since $h$ is 1-strongly convex with respect to norm $\|\cdot \|$, we have for all $u, v \in \mathcal{X}$
\begin{align}
V(u,v) \geq \frac{1}{2} \|u-v\|^2. \label{main_inequality}
\end{align}
To further analyze the problem, we introduce the following assumptions:
\begin{assumption}\label{ass1} 
 The solution (maybe not unique) for the problem \eqref{eq:VI}+\eqref{eq:sum} exists.
\end{assumption}
\begin{assumption}\label{ass2}
    The operator $F$ is monotone, i.e. for all $u, v \in \mathcal{X}$ we have
\begin{equation*}
\langle F(u) - F(v); u - v \rangle \geq 0.
\end{equation*}
\end{assumption}
\captionsetup{type=assumption}
\renewcommand{\theassumption}{3a}
\begin{assumption}\label{ass3a}
    The operator $F$ is $L_2$-Lipschitz and for all $m\in [1,M]$ $F_m$ is $L_{2,m}$-Lipschitz, i.e. for all $u, v \in \mathcal{X}$ we have
\begin{align*}
\| F(u) - F(v) \|_2 &\leq L_2 \| u - v \|_2,
\\ \| F_m(u) - F_m(v) \|_2 &\leq L_{2,m} \| u - v \|_2.
\end{align*}
\end{assumption}
In addition, we define $\overline{L}_2$ such that $\overline{L}_2^2 = \frac{1}{M}\sum_{m=1}^{M}L^2_{2, m}$. With this notation, for all $u, v \in \mathcal{X}$ we have
\begin{align*}
    \frac{1}{M}\sum\limits_{m=1}^M \|F_m(u)-F_m(v)\|^2_2\leq \bar{L}^2_2 \|u-v\|^2_2  \label{barL2_def}.
\end{align*}
\captionsetup{type=assumption}
\renewcommand{\theassumption}{3b}
\begin{assumption}\label{ass3b}
The operator $F$ is $L$-Lipschitz and for all $m\in [1,M]$ $F_m$ is $L$-Lipschitz, i.e. for all $u, v \in \mathcal{X}$ we have
\begin{align*}
\| F(u) - F(v) \|_* &\leq L \| u - v \|,
\\ \| F_m(u) - F_m(v) \|_* &\leq L \| u - v \|.
\end{align*}
\end{assumption}
Note that $\|\cdot\|\leq \|\cdot\|_2$ for all $p\in [1,2]$. Thus, we can state that $L\leq \bar L_2$. In addition, we would like to point out that these meanings often differ dramatically.

\subsection{Algorithm}
Now let us state the following
algorithm:

\begin{algorithm}[H]
	\caption{Optimistic Method with Momentum and Batching}
	\label{alg:new}
	\begin{algorithmic}[1]
\State
\noindent {\bf Parameters:}  stepsize $\eta>0$, momentum $\gamma > 0$, probability $p \in (0;1)$, batch size $b \in \{1,\ldots,M\}$, number of iterations $K$\\
\noindent {\bf Initialization:} choose  $x^{-1}_0 = x^0_0 = w^0_0 = x^k_{-1} = w^k_{-1} \in \mathcal{X}$ for all $k \in [-1, K-1]$ \label{Alg1Line2}
\For {$s = 0, 1, \ldots S-1$}
\For {$k=0, 1, \ldots, K - 1$ }
    \State Sample $j_1^k,\ldots, j_b^k$ independently from $\{1,...,M\}$ uniformly at random \label{Alg1Line5}
    \State $B^k=\{j_1^k,..., j_b^k\}$\label{Alg1Line6}
    \State $\Delta^k = \frac{1}{b}\sum\limits_{j\in B^k}(F_j(x^k_s)-F_j(w_s)+(F_j(x^k_s)-F_j(x^{k-1}_s)))+F(w_s)$ \label{Alg1Line7}
    \State $x^{k+1}_s=\argmin\limits_{x\in \mathcal{X}} \left( g(x) + \frac{1}{\eta}(1-\gamma)V(x,x^k_s)+ \frac{1}{\eta}\cdot\gamma V(x,\overline{w}_s) + \langle \Delta^k, x\rangle \right)$ \label{Alg1Line8}
    \EndFor
    \State $w_{s+1}= \frac{1}{K}\sum\limits_{k=1}^K x^k_s$\label{Alg1Line10}
    \State $\nabla h(\overline{w}_{s+1})= \frac{1}{K}\sum\limits_{k=1}^K \nabla h(x^k_s)$\label{Alg1Line11}
    \State $x^0_{s+1}= x_s^K$\label{Alg1Line12}
    \State $x^{-1}_{s+1} = x_s^{K-1}$\label{Alg1Line14}

\EndFor
	\end{algorithmic}
\end{algorithm}

Note that in line \ref{Alg1Line7} of Algorithm \ref{alg:new} we use the variance reduction technique in $\left(F_j(x^k_s)-F_j(w_s)+F(w_s)\right)$ part. The key difference from Stochastic Gradient Descent (SGD) is that instead of using $g_k = F_j(x^k)$, variance reduced methods use the approximation $g_k = F(w^k) + F_j(x^k) - F_j(w^k)$. This helps to decrease "variance" $\mathbb{E}\|g_k - F(x_k)\|^2$ comparing to SGD in the case of "good" choice of $\omega^k$ \cite{johnson2013accelerating,allen2017katyusha,alacaoglu2021stochastic}. We also add batching in lines \ref{Alg1Line5} and \ref{Alg1Line7}.

In addition, in line \ref{Alg1Line7} of Algorithm \ref{alg:new} we use $\left(F_j\left(x^k_s\right)-F_j\left(x^{k-1}_s\right)\right)$ term to implement so-called optimistic scheme, slightly different from the original option \cite{popov1980modification}. Our update is a modification of Forward-Reflected-Backward approach \cite{malitsky2020forward}, where we set $\alpha \equiv 1$ in $x^{k+1} = \eta F_j(x^k) + \eta\alpha\left[F_j(x^k) - F_j(x^{k-1})\right]$.

While for the minimization problems it is usual to apply positive (heavy-ball) momentum \cite{polyak1987introduction}, the opposite approach turns out to be suitable for variational inequalities. This effect was noticed earlier \cite{gidel2019negative,yoon2021accelerated,alacaoglu2021forward} and appeared now in the theory of stochastic methods for VIs. Hence, in line \ref{Alg1Line8} we also apply the negative momentum in $\left(-\frac{1}{\eta}\gamma V(x,x^k_s)+ \frac{1}{\eta}\cdot\gamma V(x,\overline{w}_s)\right)$ part. For illisutation, in the Euclidean case the negative momentum would have looked like $\gamma (w^k-x^k)$.

The snapshot point updates in line \ref{Alg1Line10} similar to \cite{alacaoglu2021stochastic} and SVRG \cite{johnson2013accelerating}. Due to the Bregman setup is that we have the additional point $\overline{w}_{s+1}$ that averages in the dual space.

Now we are ready to proof convergence of Algorithm \ref{alg:new}.

\subsection{Analysis}
In order to calculate a convergence rate, we use the gap function \cite{nemirovski2004prox,juditsky2011solving} as a standard convergence criterion for such problems:
\begin{equation}
    \label{gap}
    \text{Gap} (z) \eqdef \sup_{u \in \mathcal{C}} \left[ \langle F(u),  z - u  \rangle + g(x) - g(u)\right].
\end{equation}
Here $\mathcal{C}$ is a compact subset of $\mathcal{X}$ used to handle the case if  $\operatorname{dom}g$ is unbounded [see Lemma 1 from \cite{nesterov2007dual}].

To begin working with update, we propose the following lemmas.

\begin{lemma}\label{lem:1} [See Lemma 3.2 from \cite{alacaoglu2021stochastic}]

Let $g$ be a proper convex lower semicontinuous function. Denote
\begin{equation*}
    x^\dag = \underset{x \in \mathcal{C}}{\operatorname{argmin}}\left(g(x) + \langle u, x \rangle + \gamma V(x, x_1) + (1-\gamma)V(x, x_2)\right).
\end{equation*}
Then, for all $x$ it delivers
\begin{align*}
    g(x) - g(x^\dag) &+ \langle u, x - x^\dag \rangle 
    \\\geq& V(x, x^\dag) + \gamma(V(x^\dag, x_1) - V(x, x_1)) + (1-\gamma)(V(x^\dag,x_2)-V(x, x_2)).
\end{align*}
\end{lemma}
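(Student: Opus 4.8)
The plan is to exploit the first-order optimality condition of the constrained minimization that defines $x^\dag$, to rewrite the resulting Bregman-gradient terms via the three-point identity, and finally to strip off the subgradient of $g$ using convexity. First I would write the minimized objective as $\Phi(x) = g(x) + \langle u, x\rangle + \gamma V(x, x_1) + (1-\gamma)V(x, x_2)$ and recall that, since $h$ is differentiable, the gradient of $V(\cdot, y)$ in its first argument is $\nabla h(\cdot) - \nabla h(y)$. Because $x^\dag$ minimizes $\Phi$ over the convex set $\mathcal{C}$, stationarity produces a subgradient $s \in \partial g(x^\dag)$ such that, for every feasible $x$,
\[
\langle s + u + \gamma(\nabla h(x^\dag) - \nabla h(x_1)) + (1-\gamma)(\nabla h(x^\dag) - \nabla h(x_2)),\; x - x^\dag\rangle \geq 0 .
\]

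Next I would convert the two Bregman-gradient inner products into pure $V$-terms. The key tool is the three-point identity $\langle \nabla h(x^\dag) - \nabla h(y),\; x - x^\dag\rangle = V(x, y) - V(x, x^\dag) - V(x^\dag, y)$, which one checks directly by expanding the definition of $V$ and cancelling the $h$-values. Applying it with $y = x_1$ and with $y = x_2$ turns the above into
\[
\langle s, x - x^\dag\rangle + \langle u, x - x^\dag\rangle \geq \gamma\big(V(x, x^\dag) + V(x^\dag, x_1) - V(x, x_1)\big) + (1-\gamma)\big(V(x, x^\dag) + V(x^\dag, x_2) - V(x, x_2)\big) .
\]
Since $\gamma + (1-\gamma) = 1$, the two copies of $V(x, x^\dag)$ merge into a single term, giving exactly the right-hand side $V(x, x^\dag) + \gamma(V(x^\dag, x_1) - V(x, x_1)) + (1-\gamma)(V(x^\dag, x_2) - V(x, x_2))$ claimed in the lemma.

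Finally I would eliminate the subgradient by convexity of $g$: from $s \in \partial g(x^\dag)$ we have $\langle s, x - x^\dag\rangle \leq g(x) - g(x^\dag)$, so replacing $\langle s, x - x^\dag\rangle$ by the larger quantity $g(x) - g(x^\dag)$ on the left-hand side only strengthens the inequality and yields the statement. The step I expect to be the main obstacle is the bookkeeping at stationarity: the minimization is restricted to $\mathcal{C}$ and $g$ is merely convex, so the subgradient inclusion must be justified in the presence of the feasibility constraint. The cleanest route is to absorb $\mathcal{C}$ into $g$ through its indicator $\iota_{\mathcal{C}}$, write the unconstrained optimality $0 \in \partial(g + \iota_{\mathcal{C}})(x^\dag) + u + \gamma(\nabla h(x^\dag) - \nabla h(x_1)) + (1-\gamma)(\nabla h(x^\dag) - \nabla h(x_2))$, and then note that for $x \in \mathcal{C}$ the indicator contributes nothing, so the convexity estimate reduces to the plain bound above; everything else is the routine three-point-identity algebra.
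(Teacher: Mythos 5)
Your proposal is correct and is essentially the paper's own argument: the paper's proof is a one-line appeal to the first-order optimality condition (following Lemma 3.2 of Alacaoglu--Malitsky), and your write-up simply makes that explicit — optimality with a subgradient of $g$ (constraint absorbed via the indicator), the three-point identity to convert the $\nabla h$ inner products into Bregman terms, merging the two $V(x,x^\dag)$ terms since $\gamma+(1-\gamma)=1$, and the subgradient inequality to pass from $\langle s, x-x^\dag\rangle$ to $g(x)-g(x^\dag)$. No gap; this is the intended proof, just written out in full.
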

\begin{proof}
The proof of this lemma directly follows from the first order optimality condition \cite{boyd2004convex}.    
\end{proof}

Hereafter, in the next lemma, we estimate the variance of $\Delta^k$. 

\captionsetup{type=lemma}
\renewcommand{\thelemma}{2a}
\begin{lemma}\label{lem:2a}[See Lemma 2 from \cite{pichugin2024optimal}]
For any step $s$ from 0 to $S-1$ and for any $k$  from 0 to $K-1$ of Algorithm \ref{alg:new} under the Assumption \ref{ass3a} the following inequality holds: 
\begin{equation*}
    \mathbb{E}\left[\left\|\Delta^{k}-\mathbb{E}_{k}\left[\Delta^{k}\right]\right\|^{2}_2\right]
    \leq{\frac{2\bar{L}^2_2}{b}}\mathbb{E}\left[\|x_s^{k}-w_s\|^{2}+\left\|x_s^{k}-x_s^{k-1}\right\|^{2}\right], 
\end{equation*}
where $\mathbb{E}_k\left[\Delta^k\right]$ is equal to 
\begin{align}
    \mathbb{E}_{k}\left[\Delta^{k}\right]=2F(x_s^{k})-F(x_s^{k-1}).
    \label{lem2a:Ek}
\end{align}
\end{lemma}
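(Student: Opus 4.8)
The statement has two parts: the exact value of the conditional mean $\mathbb{E}_k[\Delta^k]$ and the variance bound. I would establish them in that order.

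First, to identify $\mathbb{E}_k[\Delta^k]$, I would use that the indices $j_1^k,\dots,j_b^k$ in line \ref{Alg1Line5} are drawn independently and uniformly from $\{1,\dots,M\}$, so that for any point $y$ fixed by the history up to iteration $k$ one has $\mathbb{E}_k[F_j(y)] = \frac{1}{M}\sum_{m=1}^M F_m(y) = F(y)$ by \eqref{eq:sum}. Taking the conditional expectation of the definition of $\Delta^k$ in line \ref{Alg1Line7} term by term, the two occurrences of $F(w_s)$ cancel and the remaining averages collapse to $2F(x_s^k)-F(w_s)-F(x_s^{k-1})+F(w_s)=2F(x_s^k)-F(x_s^{k-1})$, which is exactly \eqref{lem2a:Ek}.

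Second, for the variance, I would write the deviation as an average of i.i.d.\ centered summands. Setting $\zeta_j \eqdef 2F_j(x_s^k)-F_j(w_s)-F_j(x_s^{k-1})$, the definition of $\Delta^k$ gives $\Delta^k-\mathbb{E}_k[\Delta^k]=\frac{1}{b}\sum_{j\in B^k}\bigl(\zeta_j-\mathbb{E}_k[\zeta_j]\bigr)$. Since the draws are independent with replacement, the summands are conditionally i.i.d.\ and mean-zero, so all cross terms vanish and $\mathbb{E}_k\bigl\|\Delta^k-\mathbb{E}_k[\Delta^k]\bigr\|_2^2=\frac{1}{b}\,\mathbb{E}_k\bigl\|\zeta_{j_1^k}-\mathbb{E}_k[\zeta_{j_1^k}]\bigr\|_2^2$. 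This independence is precisely what produces the $1/b$ batching factor, and it is the step I would treat most carefully: it relies on the with-replacement sampling in line \ref{Alg1Line5}, not merely on using distinct indices.

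Third, I would bound the single-draw variance by the second moment, $\mathbb{E}_k\|\zeta_j-\mathbb{E}_k[\zeta_j]\|_2^2\le\mathbb{E}_k\|\zeta_j\|_2^2$, then split $\zeta_j=(F_j(x_s^k)-F_j(w_s))+(F_j(x_s^k)-F_j(x_s^{k-1}))$ and apply $\|u+v\|_2^2\le 2\|u\|_2^2+2\|v\|_2^2$. Each resulting average $\frac{1}{M}\sum_{m}\|F_m(x_s^k)-F_m(w_s)\|_2^2$ and $\frac{1}{M}\sum_{m}\|F_m(x_s^k)-F_m(x_s^{k-1})\|_2^2$ is controlled by the averaged-Lipschitz constant $\bar L_2$ from Assumption \ref{ass3a}, yielding $\bar L_2^2\|x_s^k-w_s\|_2^2$ and $\bar L_2^2\|x_s^k-x_s^{k-1}\|_2^2$. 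Combining the factor of $2$, the $1/b$, and these two bounds, then taking total expectation over the history, delivers the claimed inequality. The only real subtlety beyond the independence argument is that discarding the $-\|\mathbb{E}_k[\zeta_j]\|_2^2$ term (variance $\le$ second moment) is what lets the bound be stated without tracking the conditional mean.
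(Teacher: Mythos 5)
Your proof is correct and follows essentially the same route as the paper: zero cross terms from independent with-replacement sampling give the $1/b$ factor, the variance is bounded by the second moment, the summand is split via $\|u+v\|_2^2\le 2\|u\|_2^2+2\|v\|_2^2$, and Assumption \ref{ass3a} with $\bar L_2$ finishes the bound (the paper merely splits into the two groups before centering rather than after, which changes nothing). The only cosmetic omission is the final passage from $\|\cdot\|_2$ to $\|\cdot\|$ via $\|\cdot\|_2\le\|\cdot\|$ for $p\in[1,2]$, which the paper states explicitly and which your bound immediately admits.
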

\begin{proof}
We start from line \ref{Alg1Line7} of Algorithm \ref{alg:new}, 
\begin{align*}
\mathbb{E}_k\Big[\big\|\Delta^k -\mathbb{E}_k\left[\Delta^k\right]\big\|^2_2\Big] 
 = &
 \mathbb{E}_k\Bigg[\bigg\|\frac{1}{b}\sum\limits_{j\in B^k}(F_j(x_s^k)-F_j(w_s)+(F_j(x_s^k)-F_j(x_s^{k-1})))
 \\&+F(w_s)- (2 F(x_s^k) - F(x_s^{k-1}))\bigg\|^2_2\Bigg]. 
\end{align*}
With Cauchy–Schwarz inequality (\ref{ap:Cauchy–Schwarz}), we have
\begin{align*}
\mathbb{E}_k\Big[\big\|\Delta^k & -\mathbb{E}_k\left[\Delta^k\right]\big\|^2_2\Big]
\\
\leq &
2 \mathbb{E}_k\left[\left\|\frac{1}{b} \sum_{j \in B^k}\left(F_j\left(x_s^k\right)-F_j\left(w_s\right)\right)-\left(F\left(x_s^k\right)-F\left(w_s\right)\right)\right\|^2_2\right] \notag
\\
& +2 \mathbb{E}_k\left[\left\|\frac{1}{b} \sum_{j \in B^k}\left(F_j\left(x_s^k\right)-F_j\left(x_s^{k-1}\right)\right)-
\left(F\left(x_s^k\right)-F\left(x_s^{k-1}\right)\right)\right\|^2_2\right].
\end{align*}
Using that we choose $j_1^k,\ldots, j_s^k$ in $B^k$ indepdently and uniformly, one can note
 \begin{align*}
&\mathbb{E}_k \Big[\big\langle \left( F_j\left(x^k_s\right)-F_j\left(w_s\right)\right)-\left(F\left(x^k_s\right)-F\left(w_s\right) \right), \left( F_j\left(x^k_s\right)-F_j\left(w_s\right)\right)
\\
&\hspace{4cm}-\left(F\left(x^k_s\right)-F\left(w_s\right) \right)\big\rangle \Big]
\\
&\hspace{3cm}=\mathbb{E}_k \bigg[\langle \mathbb{E}_{j^k_i} \left[\left( F_{j^k_i}\left(x^k_s\right)-F_{j^k_i}\left(w_s\right)\right)-\left(F\left(x^k_s\right)-F\left(w_s\right) \right) \right], 
\\
&\hspace{4cm}\mathbb{E}_{j^k_l} \left[\left( F_{j^k_l}\left(x^k_s\right)-F_{j^k_l}\left(w_s\right)\right)-\left(F\left(x^k_s\right)-F\left(w_s\right) \right) \right]\rangle \bigg]
\\
&\hspace{3cm}=0.
\end{align*}
Hence, we get
 \begin{align*}
 \mathbb{E}_k\Big[\big\|\Delta^k & -\mathbb{E}_k\left[\Delta^k\right]\big\|^2_2\Big]
 \\
\leq&2 \mathbb{E}_k\left[\sum_{j \in B^k}\frac{1}{b^2}\left\|\left(F_j\left(x^k_s\right)-F_j\left(w_s\right)\right)-\left(F\left(x^k_s\right)-F\left(w_s\right)\right)\right\|^2_2\right] \notag
  \\
&+ 2 \mathbb{E}_k\left[\sum_{j \in B^k}\frac{1}{b^2}\left\|\left(F_j\left(x^k_s\right)-F_j\left(x^{k-1}_s\right)\right)-\left(F\left(x^k_s\right)-F\left(x^{k-1}\right)\right)\right\|^2_2\right] \notag
\\
=&\frac{2}{b^2} \mathbb{E}_k\left[\sum_{j \in B^k}\left\|\left(F_j\left(x^k_s\right)-F_j\left(w_s\right)\right)-\left(F\left(x^k_s\right)-F\left(w_s\right)\right)\right\|^2_2\right] \notag
  \\
&+\frac{2}{b^2} \mathbb{E}_k\left[\sum_{j \in B^k}\left\|\left(F_j\left(x^k_s\right)-F_j\left(x^{k-1}_s\right)\right)-\left(F\left(x^k_s\right)-F\left(x^{k-1}_s\right)\right)\right\|^2_2\right] \notag
\\
 \leq & \frac{2}{b^2} \mathbb{E}_k\left[\sum_{j \in B^k}\left\|F_j\left(x^k_s\right)-F_j\left(w_s\right)\right\|^2_2\right]
+\frac{2}{b^2} \mathbb{E}_k\left[\sum_{j \in B^k}\left\|F_j\left(x^k_s\right)-F_j\left(x^{k-1}_s\right)\right\|^2_2\right]. \notag
\end{align*}
In the last step, we used the fact that $\mathbb{E}\|X-\mathbb{E}X\|^2 = \mathbb{E}\|X\|^2-\|\mathbb{E}X\|^2$. Next, we again take into account that $j_1^k,\ldots, j_s^k$ in $B^k$ are chosen uniformly,
 \begin{align*}
 & \mathbb{E}_k\Big[\big\|\Delta^k  -\mathbb{E}_k\left[\Delta^k\right]\big\|^2_2\Big]
 \\
 &\hspace{0.7cm} \leq \frac{2}{b} \mathbb{E}_k\left[\mathbb{E}_{j \sim \text { u.a.r. }\{1, \ldots, M\}}\left[\left\|F_j\left(x^k_s\right)-F_j\left(w_s\right)\right\|^2_2 +\left\|F_j\left(x^k_s\right)-F_j\left(x^{k-1}_s\right)\right\|^2_2\right]\right] \notag
 \\
 &\hspace{0.7cm} = \frac{2}{M b} \sum_{j=1}^M \left(\left\|F_j\left(x^k_s\right)-F_j\left(w_s\right)\right\|^2_2+\left\|F_j\left(x^k_s\right)-F_j\left(x^{k-1}_s\right)\right\|^2_2\right) .
\end{align*}
We take the full expectation of both parts:
\begin{align*}
 \mathbb{E}\Big[\big\|\Delta^k & -\mathbb{E}_k\left[\Delta^k\right]\big\|^2_2\Big]
\leq \frac{2}{M b} \mathbb{E}\left[\sum_{j=1}^M \left(\left\|F_j\left(x^k_s\right)-F_j\left(w_s\right)\right\|^2_2+\left\|F_j\left(x^k_s\right)-F_j\left(x^{k-1}_s\right)\right\|^2_2\right)\right] .
\end{align*}
Using $L_2$-Lipschitzness of $F$ (Assumption \ref{ass3a}) and the fact that $\| \cdot \|_2\leq \| \cdot \|$, we can rewrite it as
\begin{align*}
    \mathbb{E}\left[\left\|\Delta^{k}-\mathbb{E}_{k}\left[\Delta^{k}\right]\right\|^2_2 \right]\leq & {\frac{2\bar{L}^2_2}{b}}\mathbb{E}\left[\|x^k_s-w_s\|^{2}_2+\left\|x^k_s-x^{k-1}_s\right\|^{2}_2\right]
    \\ \leq & {\frac{2 \bar{L}^2_2}{b}}\mathbb{E}\left[\|x^k_s-w_s\|^{2}+\left\|x^k_s-x^{k-1}_s\right\|^{2}\right].
\end{align*}
This finishes the proof.

\end{proof}

\captionsetup{type=lemma}
\renewcommand{\thelemma}{2b}
\begin{lemma}\label{lem:2b} For any step $s$ from 0 to $S-1$ and for any $k$  from 0 to $K-1$ of Algorithm \ref{alg:new} under the Assumption \ref{ass3b} the following inequality holds:
\begin{equation*}
    \mathbb{E}\left[\left\|\Delta^{k}-\mathbb{E}_{k}\left[\Delta^{k}\right]\right\|^{2}_*\right]
    \leq{\frac{2(1+C\ln n)\bar{L}^2}{b}}\mathbb{E}\left[\|x_s^{k}-w_s\|^{2}+\left\|x_s^{k}-x_s^{k-1}\right\|^{2}\right], 
\end{equation*}
where $\mathbb{E}_k\left[\Delta^k\right]$ is equal to 
\begin{equation*}
    \mathbb{E}_{k}\left[\Delta^{k}\right]=2F(x_s^{k})-F(x_s^{k-1}).
\end{equation*}
\end{lemma}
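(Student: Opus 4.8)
The plan is to reproduce the argument of Lemma~\ref{lem:2a} essentially line by line, changing only the norm in which the variance is measured and replacing the one Euclidean identity that no longer holds. First I would verify the formula for $\mathbb{E}_k[\Delta^k]$: since the conditional expectation over the uniform sample is linear and each $F_j$ is an unbiased estimate of $F$, averaging the three groups of terms in line~\ref{Alg1Line7} gives $\mathbb{E}_k[\Delta^k]=2F(x_s^k)-F(x_s^{k-1})$ exactly as before, and this step does not see the choice of norm at all. Subtracting this and splitting with $\|a+b\|_*^2\le 2\|a\|_*^2+2\|b\|_*^2$, the centered quantity decomposes into a variance-reduction part $\tfrac1b\sum_{j\in B^k}Y_j$ with $Y_j=(F_j(x_s^k)-F_j(w_s))-(F(x_s^k)-F(w_s))$ and an optimistic part with the analogous conditionally mean-zero summands built from $F_j(x_s^k)-F_j(x_s^{k-1})$.

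The essential difference from the Euclidean case appears in the very next step. In Lemma~\ref{lem:2a} the cross terms of $\|\tfrac1b\sum_j Y_j\|_2^2$ vanished because $\mathbb{E}\langle Y_i,Y_l\rangle=\langle\mathbb{E}Y_i,\mathbb{E}Y_l\rangle=0$, and the diagonal was handled by $\mathbb{E}\|X-\mathbb{E}X\|_2^2=\mathbb{E}\|X\|_2^2-\|\mathbb{E}X\|_2^2\le\mathbb{E}\|X\|_2^2$; both facts rest entirely on the inner-product structure. Neither is available for a general dual norm $\|\cdot\|_*=\|\cdot\|_q$ with $q\in[2,\infty]$. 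The substitute I would invoke is the $2$-uniform smoothness (type-$2$) inequality for the squared dual norm: for independent, conditionally mean-zero $Y_j$ in $\mathbb{R}^n$ one has $\mathbb{E}_k\|\sum_{j\in B^k}Y_j\|_*^2\le\kappa_n\sum_{j\in B^k}\mathbb{E}_k\|Y_j\|_*^2$, where $\kappa_n$ is the smoothness constant of $(\mathbb{R}^n,\|\cdot\|_q)$. Dividing by $b^2$ and using that the samples are i.i.d.\ collapses this to $\tfrac{\kappa_n}{b}\,\mathbb{E}_j\|Y_j\|_*^2$, which is the mechanism producing the extra factor relative to the Euclidean bound. \textbf{Controlling $\kappa_n$ uniformly in $q$ is the main obstacle.}

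Quantifying $\kappa_n$ is where the $(1+C\ln n)$ term is born. The space $(\mathbb{R}^n,\|\cdot\|_q)$ is $2$-uniformly smooth with constant $q-1$ for every finite $q\ge2$, while the hardest case $q=\infty$ (that is, $p=1$) must be reached by passing through $\|\cdot\|_{q_0}$ with $q_0\asymp\ln n$, using $\|x\|_\infty\le\|x\|_{q_0}\le n^{1/q_0}\|x\|_\infty\le e\|x\|_\infty$, so that the smoothness constant $q_0-1$ is $O(\ln n)$; taking the worst case over $q\in[2,\infty]$ yields $\kappa_n\le 1+C\ln n$ with an absolute $C$. With this in hand the finish mirrors Lemma~\ref{lem:2a}: I bound the centered second moment $\mathbb{E}_j\|Y_j\|_*^2$ of each summand by its raw second moment $\mathbb{E}_j\|F_j(x_s^k)-F_j(w_s)\|_*^2$ (in a general normed space this costs only an absolute factor, which I absorb into $C$), pass to the full expectation, and apply the $L$-Lipschitzness of each $F_m$ from Assumption~\ref{ass3b} — noting that here $\bar L=L$ — to replace the operator differences by $\|x_s^k-w_s\|^2$ and $\|x_s^k-x_s^{k-1}\|^2$. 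Collecting the variance-reduction and optimistic groups with the leading $2/b$ then gives the claimed bound $\tfrac{2(1+C\ln n)\bar L^2}{b}\,\mathbb{E}[\|x_s^k-w_s\|^2+\|x_s^k-x_s^{k-1}\|^2]$.
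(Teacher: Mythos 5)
Your proposal is correct, but it reaches the $(1+C\ln n)$ factor by a genuinely different mechanism than the paper. The paper does not split the centered summand into its variance-reduction and optimistic parts and does not invoke uniform smoothness of $\ell_q$; instead it keeps the whole centered summand $\xi^j$, writes $\bigl\|\sum_{j}\xi^j\bigr\|_*=\max_{u\in B_1(0)}\langle\sum_j\xi^j,u\rangle$, and bounds this maximum by running an auxiliary mirror-descent sequence $u^{j+1}=\argmin_{u\in B_1(0)}\{V(u,u^j)+\langle\tfrac{\Omega}{\sigma\sqrt b}\xi^j,u\rangle\}$ on the dual unit ball: the three-point identity and Young's inequality give a regret-type bound $\|\sum_j\xi^j\|_*\le\sigma\Omega\sqrt b+\sum_j\langle\xi^j,u^j\rangle$, the cross terms vanish in expectation because $u^j$ depends only on earlier samples, and the logarithm enters through the prox-diameter $\Omega^2=\mathcal O(\ln n)$ of the ball for $h(x)=\tfrac{1}{2(a-1)}\|x\|_a^2$ with $a=\tfrac{2\ln n}{2\ln n-1}$ (the table of prox-functions). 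You instead use the martingale type-2 inequality coming from $2$-uniform smoothness of $\|\cdot\|_q$, with constant $q-1$ for finite $q$ and the standard passage through $q_0\asymp\ln n$ for $q$ near $\infty$; the same $\ln n$ appears, now as a smoothness constant rather than a prox-diameter. Both routes are sound: yours is more classical and avoids the auxiliary sequence entirely, while the paper's argument stays inside the Bregman machinery it already set up for Lemma 3b and gives the per-sample bound without needing the centering and norm-switching losses you absorb into $C$ (which is legitimate here, since $C$ is an unspecified absolute constant, though in the Euclidean case $p=q=2$ the paper's route recovers the constant of Lemma 2a exactly while yours degrades it by a harmless absolute factor). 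One small point to make explicit if you write this up: the type-2 inequality needs the summands to be conditionally mean-zero given the preceding ones, which holds because $j_1^k,\dots,j_b^k$ are drawn independently.
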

\begin{proof}
    
Similar to the proof of Lemma \ref{lem:2a} we start from line \ref{Alg1Line7} of Algorithm \ref{alg:new}, 
\begin{align*}
\mathbb{E}\Big[\big\|\Delta^k & -\mathbb{E}\left[\Delta^k\right]\big\|^2_*\Big] 
\\ 
 = &
 \mathbb{E}_k\Bigg[\Bigg\|\frac{1}{b}\sum\limits_{j\in B^k}\big(F_j(x_s^k)-F_j(w_s)+(F_j(x_s^k)-F_j(x_s^{k-1})))+F(w_s)
 \\& - (2 F(x_s^k) - F(x_s^{k-1})\big)\Bigg\|^2_*\Bigg].
\end{align*}
For shortness we introduce the stochastic variables
\begin{align*}
    \xi^j = \left(F_j(x_s^k)-F_j(w_s)+(F_j(x_s^k)-F_j(x_s^{k-1})))+F(w_s) - (2 F(x_s^k) - F(x_s^{k-1})\right),
\end{align*}
and declare the following properties of them:
\begin{enumerate}
    \item\label{exp} $\mathbb{E} \xi^j = 0 \;\;\; \forall j\in B^k$,
    \item $\xi^j$ is independent,
    \item $\|\xi^j  \|<\sigma = \sqrt{2 \bar{L} \left( \|x_s^k - w_s\|^2 + \|x_s^k - x_s^{k-1}\|^2 \right)}$. (Here we used the Assumption \ref{ass3b}).
\end{enumerate}
We also bring in the sequence $\left\{u^j\right\}_{j\in B^k}$ defined as
\begin{align}
    u^{j+1} = \argmin_{y\in B_1(0)} \left( V(u, u^j) + \left\langle\frac{\Omega}{\sigma \sqrt{b}} \xi^j, y \right\rangle \right) 
    \label{u_def}
\end{align}
with a starting point $u^1 = 0$. In addition, we introduce $\max\limits_{u\in B_1(0)} V(u, u^1) = \max\limits_{u\in B_1(0)} V(u, 0) = \frac{\Omega^2}{2}$.

With these denotations and the definition of $\|\cdot \|_*$, it is correct that
\begin{align}
    \left\|\sum_{j\in B^k} \frac{\Omega}{\sigma \sqrt{b}} \xi^j\right\|_* =& 
    \max_{u\in B_1(0)} \left\langle \sum_{j\in B^k} \frac{\Omega}{\sigma \sqrt{b}}\xi^j, u \right\rangle. \notag
\end{align}
To estimate the right side, we can write the optimality condition \cite{boyd2004convex} for (\ref{u_def}):
\begin{align*}
    \left\langle \nabla h(u^{j+1})-\nabla h(u^j)-\frac{\Omega}{\sigma \sqrt{b}}\xi^{j+1},y-u^{j+1} \right\rangle\geq 0.
\end{align*}
We apply the three point identity (\ref{ap:3point}) to obtain
\begin{align}
    V(u, u^j)-V(y,u^{j+1})-V(u^{j+1}, u^j)- \left\langle \frac{\Omega}{\sigma \sqrt{b}}\xi^{j+1}, y-u^{j+1} \right\rangle\geq 0 .\label{lem2b:before_young}
\end{align}
Using Young’s inequality (\ref{ap:Young}) and the inequality (\ref{main_inequality}), we achieve
\begin{align}
\left\langle \frac{\Omega}{\sigma \sqrt{b}}\xi^{j+1}, y-u^{j+1} \right\rangle= & \left\langle \frac{\Omega}{\sigma \sqrt{b}}\xi^{j+1}, y-u^{j} \right\rangle + \left\langle \frac{\Omega}{\sigma \sqrt{b}}\xi^{j+1}, u^j-u^{j+1} \right\rangle \notag
    \\\geq & \left\langle \frac{\Omega}{\sigma \sqrt{b}}\xi^{j+1}, y-u^{j} \right\rangle + \frac{1}{2}\left\|\frac{\Omega}{\sigma \sqrt{b}}\xi^{j+1}\right\|^2_*-\frac{1}{2} \|u^{j+1}-u^j \|^2\notag
    \\\geq & \left\langle \frac{\Omega}{\sigma \sqrt{b}}\xi^{j+1}, y-u^{j} \right\rangle + \frac{1}{2}\left\|\frac{\Omega}{\sigma \sqrt{b}}\xi^{j+1}\right\|^2_*-V(u^{j+1},u^j). \label{lem2b:young}
\end{align}
Combining (\ref{lem2b:before_young}) with (\ref{lem2b:young}), we get
\begin{align*}
    \left\langle \frac{\Omega}{\sigma \sqrt{b}}\xi^{j+1}, y \right\rangle \leq V(y, u^j) - V(y, u^{j+1})+\left\langle \frac{\Omega}{\sigma \sqrt{b}}\xi^{j+1}, u^j\right\rangle + \frac{1}{2}\left\|\frac{\Omega}{\sigma \sqrt{b}}\xi^{j+1}\right\|^2_*.
\end{align*}
Now we take sum for all $j$ to estimate
\begin{align*}
    \left\|\sum_{j\in B^k} \frac{\Omega}{\sigma \sqrt{b}} \xi^j\right\|_* 
    \leq &\max_{u\in B_1(0)} V(u, u^1) + \frac{1}{2} \sum_{j\in B^k} \left\| \frac{\Omega}{\sigma \sqrt{b}}\xi^j \right\|^2_* + \sum_{j\in B^k} \left\langle \frac{\Omega}{\sigma \sqrt{b}}\xi^j, u^j \right\rangle\notag
    \\
    \leq & \frac{\Omega^2}{2} + \frac{1}{2} \sum_{j\in B^k} \left(\frac{\Omega}{\sqrt{b}}\right) ^2  + \sum_{j\in B^k} \left\langle \frac{\Omega}{\sigma \sqrt{b}}\xi^j, u^j \right\rangle. 
\end{align*}
Here we used the third property of $\xi^j$.
Then,
\begin{align*}
    \left\|\sum_{j\in B^k}  \xi^j\right\|_* \leq \sigma \Omega \sqrt{b} + \sum_{j\in B^k} \left\langle \xi^j, u^j \right\rangle.
\end{align*}
After taking square of both sides, one can get
\begin{align*}
    \left\|\sum_{j=1}^b  \xi^j\right\|_*^2 
    \leq & \sigma^2 \Omega^2 b + \sum_{j\in B^k} \left\langle \xi^j, u^j \right\rangle ^2 + 2 \sigma \Omega \sqrt{b} \sum_{j\in B^k} \left\langle \xi^j, u^j \right\rangle.
\end{align*}
Now we take the expectation of both sides
\begin{align}
\mathbb{E}\left\|\sum_{j\in B^k} \xi^j\right\|_*^2
\leq & \mathbb{E}\left[\sigma^2 \Omega^2 b\right] + \mathbb{E}\sum_{j\in B^k} \left\langle \xi^j, u^j \right\rangle ^2 + 2 \Omega \sqrt{b} \sum_{j\in B^k} \mathbb{E}\sigma\left\langle \xi^j, u^j \right\rangle.\label{lem2b:before_exp}
\end{align}
 We use the fact that for all $j$ from $B^K$ $\xi^j$ and $u^j$ are independent to get
 \begin{align}
     \sum_{j\in B^k} \mathbb{E}\sigma\left\langle \xi^j, u^j \right\rangle = 0. \label{lem2b:xu_u}
 \end{align}
Combining (\ref{lem2b:before_exp}) and (\ref{lem2b:xu_u}), one can achieve
\begin{align*}
\mathbb{E}\left\|\sum_{j\in B^k} \xi^j\right\|_*^2
\leq & \mathbb{E}\left[\sigma^2 \Omega^2 b\right] + \mathbb{E}\sum_{j\in B^k} \left\langle \xi^j, u^j \right\rangle ^2.
\end{align*}
Now what is left is to apply the Young's inequality (\ref{ap:Young}) and the first property of $\xi^j$ to get
\begin{align*}
\mathbb{E}\left\|\sum_{j\in B^k} \xi^j\right\|_*^2
\leq & \mathbb{E}\left[\sigma^2 \Omega^2 b\right] + \mathbb{E}\sum_{j\in B^k} \left[ \|\xi^j\|^2_*\cdot \|u^j\|^2 \right]
\\
\leq & \mathbb{E}\left[\sigma^2 \Omega^2 b\right] + \mathbb{E}\left[b \sigma^2 \right] 
\\
\leq & \mathbb{E}\left[\sigma^2 b (\Omega^2 + 1)\right].
\end{align*}
In order to perform the next step, we need to evaluate $\Omega$. This kind of result may be found in Table \ref{table_gasnikov} of \cite{gasnikov2017universal}. For convinience, it is represented below. 
\begin{table}[H]
    \centering
    \begin{tabular}{|c|c|c|c|}
    \hline
        $Q = B^n_p(1)$ & $1\leq p \leq a$ & $a \leq p \leq 2$ & $2 \leq p \leq \infty$ \\ \hline
        $\|\cdot\|$ & $\|\cdot\|_1$ & $\|\cdot\|_p$ & $\|\cdot\|_2$ \\ \hline
        $h(x)$ & $\frac{1}{2(a-1)}\|x\|^2_a$ & $\frac{1}{2(p-1)}\|x\|^2_p$ & $\frac{1}{2}\|x\|^2_2$ \\ \hline
        $\Omega^2$ & $\cO(\ln n)$ & $\cO((p-1)^{-1})$ & $\cO(n^{\frac{1}{2} - \frac{1}{p}})$ \\ \hline
    \end{tabular}
    \caption{Examples of prox-functions for ball-shaped sets $Q$ in various norms}
    \label{table_gasnikov}
\end{table}

In particular, we can choose $h(x) = \frac{1}{2(a-1)}\|x\|^2_a$ for $a = \frac{2 \ln n}{2\ln n - 1}$. At this case for $p\in [1,2]$ it is also true that $\Omega^2 = \cO(\ln n)$, where $n$ is dimensionality of $\mathcal{X}$. After plugging constant $\sqrt{C}$ in the definition of $\cO(\cdot)$ bound, substituting $\sigma = \sqrt{2 \bar{L} \left( \|x_s^k - w_s\|^2 + \|x_s^k - x_s^{k-1}\|^2 \right)}$, we finally get
\begin{align*}
    \mathbb{E}\left\|\frac{1}{b}\sum_{j\in B^k} \xi^j\right\|_*^2 \leq \frac{2\bar{L} (1+C\ln n)}{b}\mathbb{E}\left[\|x_s^k - w_s\|^2 + \|x_s^k - x_s^{k-1}\|^2\right].
\end{align*}
That allows us to finish the proof.
\end{proof}

\captionsetup{type=lemma}
\renewcommand{\thelemma}{3a}
\begin{lemma}\label{lem:3a}[see Lemma 2.4 from \cite{alacaoglu2021stochastic}]
    Let $\mathcal{F} = (\mathcal{F}_k)_{k\geq 0}$ be a filtration and $(u^k)$ a stochastic process adapted to $\mathcal{F}$ with $\mathbb{E}[u^{k+1}|\mathcal{F}_k]=0$. Then for any $K\in \mathbb{N}$, $x^0\in X$ and any compact set $\mathcal{C}\subseteq X$
    \begin{align*}
        \mathbb{E}\left[\max\limits_{x\in \mathcal{C}}\sum\limits_{k=0}^{K-1}\langle u^{k+1}, x \rangle\right] \leq \max\limits_{x\in \mathcal{C}}\frac{1}{2}\|x^0-x\|^2_2+\frac{1}{2}\sum\limits_{k=0}^{K-1}\mathbb{E}\|u^{k+1}\|^2_2.
    \end{align*}
\end{lemma}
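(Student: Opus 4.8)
The plan is to reduce this maximal inequality for a martingale-difference sequence to a deterministic ``online gradient ascent'' regret bound, after which the stochastic part collapses by the tower rule. First I would introduce the auxiliary sequence $z^0 = x^0$ and $z^{k+1} = z^k + u^{k+1}$. The crucial structural observation is that $z^k = x^0 + \sum_{i=1}^k u^i$ depends only on $u^1,\ldots,u^k$, so since $(u^k)$ is adapted to $\mathcal{F}$, the iterate $z^k$ is $\mathcal{F}_k$-measurable; this is exactly what will let the cross terms vanish later.

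Next I would establish a per-step identity by expanding the square:
\begin{align*}
\|z^{k+1}-x\|_2^2 = \|z^k - x\|_2^2 + 2\langle u^{k+1}, z^k - x\rangle + \|u^{k+1}\|_2^2,
\end{align*}
which rearranges to $\langle u^{k+1}, x - z^k\rangle = \tfrac12(\|z^k - x\|_2^2 - \|z^{k+1}-x\|_2^2) + \tfrac12\|u^{k+1}\|_2^2$. Summing over $k=0,\ldots,K-1$ telescopes the distance terms, and dropping the nonnegative $-\tfrac12\|z^K - x\|_2^2$ yields, for every fixed $x$, the deterministic regret bound $\sum_{k=0}^{K-1}\langle u^{k+1}, x - z^k\rangle \le \tfrac12\|x^0 - x\|_2^2 + \tfrac12\sum_{k=0}^{K-1}\|u^{k+1}\|_2^2$.

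Then I would write $\langle u^{k+1}, x\rangle = \langle u^{k+1}, x - z^k\rangle + \langle u^{k+1}, z^k\rangle$ and sum, so that the regret bound gives
\begin{align*}
\sum_{k=0}^{K-1}\langle u^{k+1}, x\rangle \le \tfrac12\|x^0 - x\|_2^2 + \tfrac12\sum_{k=0}^{K-1}\|u^{k+1}\|_2^2 + \sum_{k=0}^{K-1}\langle u^{k+1}, z^k\rangle.
\end{align*}
Because the last sum does not depend on $x$, I can take $\max_{x\in\mathcal{C}}$ on both sides, keeping that term intact, and then take the total expectation. The decisive step is that $\mathbb{E}[\langle u^{k+1}, z^k\rangle] = \mathbb{E}[\langle \mathbb{E}[u^{k+1}\mid\mathcal{F}_k], z^k\rangle] = 0$ by the tower property together with the $\mathcal{F}_k$-measurability of $z^k$ and the hypothesis $\mathbb{E}[u^{k+1}\mid\mathcal{F}_k]=0$; this removes the entire cross term and leaves precisely the claimed bound, the distance term being deterministic since $x^0$ is fixed.

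The main obstacle is not any single computation but getting the order of operations right: the maximum over $x\in\mathcal{C}$ must be taken \emph{before} the expectation, which is legitimate only because the comparator-independent cross term can be isolated, and the measurability bookkeeping for $z^k$ must be airtight so that the cross term genuinely integrates to zero. One should also note that working in the Euclidean norm $\|\cdot\|_2$ here (rather than the general Bregman geometry used elsewhere in the paper) is exactly what makes the clean ``expand the square and telescope'' argument available.
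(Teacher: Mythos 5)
Your proposal is correct. It uses the same underlying mechanism as the paper — introduce a ``ghost'' auxiliary sequence driven by the noise, telescope a per-step distance estimate, isolate the comparator-independent cross term $\sum_{k}\langle u^{k+1},z^k\rangle$ before taking the maximum, and kill it in expectation via adaptedness of $z^k$ and $\mathbb{E}[u^{k+1}\mid\mathcal{F}_k]=0$ — but the route differs in one respect: the paper obtains Lemma~3a as the Euclidean specialization of the Bregman Lemma~3b, whose proof uses the constrained prox step $z^{k+1}=\argmin_{x\in\dom g}\{\langle -u^{k+1},x\rangle+V(x,z^k)\}$, the first-order optimality condition, the three-point identity, Young's inequality and $1$-strong convexity of $h$. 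You instead take the unconstrained update $z^{k+1}=z^k+u^{k+1}$, for which expanding the square gives an exact per-step identity, so no optimality condition, Young step, or strong-convexity argument is needed; this buys a shorter, purely algebraic proof of 3a, at the price of not yielding the general Bregman statement 3b that the paper also needs (your expansion-of-the-square trick has no direct analogue there, which is precisely why the paper routes through the prox sequence). Your measurability bookkeeping ($z^k=x^0+\sum_{i\le k}u^i$ is $\mathcal{F}_k$-measurable, hence the cross term integrates to zero by the tower rule) and the order of operations (max before expectation, legitimate because the cross term and the quadratic noise term do not depend on $x$) are both handled correctly.
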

\begin{proof}
    See proof from Lemma \ref{lem:3b} in the case of substitution $V(x, y) = \frac{1}{2}\|x - y\|^2$.
\end{proof}
\captionsetup{type=lemma}
\renewcommand{\thelemma}{3b}
\begin{lemma}\label{lem:3b}[see Lemma 3.5 from \cite{alacaoglu2021stochastic}]
Let $\mathcal{F} = (\mathcal{F}_k)_{k\geq 0}$ be a filtration and $(u^k)$ a stochastic process adapted to $\mathcal{F}$ with $\mathbb{E}[u^{k+1}| \mathcal{F}_k]=0$. Then for any $K\in \mathbb{N}$, $x^0\in X$ and any compact set $\mathcal{C}\subseteq X$    
\begin{align*}
    \mathbb{E}\left[ \max\limits_{x\in \mathcal{C}} \sum_{s=0}^{S-1}\sum_{k=0}^{K-1}\langle u^{k+1}, x \rangle \right]\leq \max_{x\in \mathcal{C}}V(x,x_0)+\frac{1}{2}\sum_{s=0}^{S-1}\sum_{k=0}^{K-1}\mathbb{E}\|u^{k+1}\|^2_*
\end{align*}

\end{lemma}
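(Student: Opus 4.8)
This is a Bregman-geometry maximal inequality for a martingale sum, and the natural route is the classical \emph{ghost-iterate} (auxiliary-sequence) argument, exactly in the spirit of Lemma~\ref{lem:1} and the optimality manipulations already used in Lemma~\ref{lem:2b}. First I would collapse the double index into a single running index $t = sK + k$ ranging over the $T \eqdef SK$ summands, so the target becomes $\mathbb{E}\left[\max_{x\in\mathcal{C}}\sum_{t=0}^{T-1}\langle u^{t+1},x\rangle\right] \le \max_{x\in\mathcal{C}}V(x,x_0) + \tfrac12\sum_{t=0}^{T-1}\mathbb{E}\|u^{t+1}\|_*^2$. The decisive device is a mirror step driven by the noise itself: set $z_0 = x_0$ and
\begin{equation*}
    z_{t+1} = \argmin_{x\in\mathcal{C}}\left(\langle -u^{t+1}, x\rangle + V(x, z_t)\right),
\end{equation*}
so that each $z_t$ is $\mathcal{F}_t$-measurable while $u^{t+1}$ remains the fresh, mean-zero increment.

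Next I would extract a per-step estimate just as in Lemma~\ref{lem:2b}: the first-order optimality condition for $z_{t+1}$ combined with the three-point identity (\ref{ap:3point}) gives
\begin{equation*}
    \langle u^{t+1}, x - z_{t+1}\rangle \le V(x, z_t) - V(x, z_{t+1}) - V(z_{t+1}, z_t).
\end{equation*}
To recover $\langle u^{t+1},x\rangle$ I split $\langle u^{t+1}, x\rangle = \langle u^{t+1}, x - z_t\rangle + \langle u^{t+1}, z_t\rangle$, then decompose the first summand as $\langle u^{t+1}, x - z_{t+1}\rangle + \langle u^{t+1}, z_{t+1} - z_t\rangle$, and control the leftover inner product by Young's inequality (\ref{ap:Young}) together with the strong-convexity bound (\ref{main_inequality}):
\begin{equation*}
    \langle u^{t+1}, z_{t+1} - z_t\rangle \le \tfrac12\|u^{t+1}\|_*^2 + \tfrac12\|z_{t+1}-z_t\|^2 \le \tfrac12\|u^{t+1}\|_*^2 + V(z_{t+1}, z_t).
\end{equation*}
Adding these two displays, the $\pm V(z_{t+1},z_t)$ contributions cancel, leaving the clean recursion $\langle u^{t+1}, x - z_t\rangle \le V(x, z_t) - V(x, z_{t+1}) + \tfrac12\|u^{t+1}\|_*^2$.

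Summing this telescoping bound over $t = 0,\dots,T-1$, discarding $-V(x,z_T)\le 0$, and using $z_0 = x_0$ yields, for every fixed $x\in\mathcal{C}$, the estimate $\sum_t \langle u^{t+1}, x - z_t\rangle \le V(x, x_0) + \tfrac12\sum_t\|u^{t+1}\|_*^2$. Since the residual martingale term $\sum_t\langle u^{t+1}, z_t\rangle$ is independent of $x$, I add it back and take the maximum over $\mathcal{C}$, obtaining $\max_{x}\sum_t\langle u^{t+1},x\rangle \le \max_{x}V(x,x_0) + \tfrac12\sum_t\|u^{t+1}\|_*^2 + \sum_t\langle u^{t+1}, z_t\rangle$. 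Finally I take expectations: because $z_t$ is $\mathcal{F}_t$-measurable and $\mathbb{E}[u^{t+1}\mid\mathcal{F}_t]=0$, the tower property gives $\mathbb{E}\sum_t\langle u^{t+1}, z_t\rangle = 0$, which is exactly the claimed bound. Lemma~\ref{lem:3a} then follows as the special case $V(x,y)=\tfrac12\|x-y\|_2^2$.

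The main obstacle — and the entire reason for introducing $z_t$ — is that one may not naively interchange $\max_{x\in\mathcal{C}}$ and $\mathbb{E}$, since the maximizing $x$ is a random vector correlated with the noise. The ghost iterate is precisely what absorbs all $x$-dependence into a telescoping Bregman sum, leaving only an $x$-free martingale residual whose expectation vanishes. The one delicate point is aligning the Young/strong-convexity step so that the $V(z_{t+1},z_t)$ terms cancel exactly and no stray negative curvature term survives; everything else is routine telescoping and conditioning.
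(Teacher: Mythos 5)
Your proposal is correct and follows essentially the same route as the paper's proof: the same noise-driven ghost iterate $z_{t+1}=\argmin_x\{\langle -u^{t+1},x\rangle+V(x,z_t)\}$, the same per-step bound via first-order optimality, the three-point identity, Young's inequality and $1$-strong convexity of $h$, followed by telescoping and killing $\mathbb{E}\sum_t\langle u^{t+1},z_t\rangle$ by conditioning. If anything, your write-up is slightly tidier than the paper's (explicit initialization $z_0=x_0$, which is needed to obtain $\max_{x}V(x,x_0)$ on the right-hand side, and the correct sign in the Young step), so no changes are needed.
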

\begin{proof}
Let us define 
\begin{align*}
    z^{k+1} = \argmin_{x\in \dom g}\left\{ \langle-u^{k+1}, x \rangle + V(x,z^k)\right\}.
\end{align*}
We use the the first order optimality condition \cite{boyd2004convex} for this notation to get
\begin{align*}
    \langle \nabla h(z^{k+1})-\nabla h(z^k)-u^{k+1},x-z^{k+1} \rangle\geq 0.
\end{align*}
We apply the three point identity (\ref{ap:3point}) to obtain
\begin{align}
    V(x, z^k)-V(x,z^{k+1})-V(z^{k+1}, z^k)- \langle u^{k+1}, x-z^{k+1} \rangle\geq 0 .\label{lem:4:before_young}
\end{align}
Applying the Young’s inequality (\ref{ap:Young}), we achieve
\begin{align}
\langle u^{k+1}, x-z^{k+1} \rangle= & \langle u^{k+1}, x-z^{k} \rangle + \langle u^{k+1}, z^k-z^{k+1} \rangle \notag
    \\\geq & \langle u^{k+1}, x-z^{k} \rangle + \frac{1}{2}\|u^{k+1}\|^2_*-\frac{1}{2} \|z^{k+1}-z^k \|^2\notag
    \\\geq & \langle u^{k+1}, x-z^{k} \rangle + \frac{1}{2}\|u^{k+1}\|^2_*-V(z^{k+1},z^k). \label{lem:4:young}
\end{align}
Combining (\ref{lem:4:before_young}) with (\ref{lem:4:young}), we get
\begin{align*}
    \langle u^{k+1}, x \rangle \leq V(x, z^k)- V(x, z^{k+1})+\langle u^{k+1}, z^k\rangle + \frac{1}{2}\|u^{k+1}\|^2_*.
\end{align*}
We sum this inequality for all $k$ from $0$ to $K-1$, take maximum, expectation and use the fact that $\mathbb{E}\left[\sum\limits_{s=0}^{S-1}\sum\limits_{k=1}^{K-1}\langle u^{k+1}, z^k \rangle\right]=0$. That result finishes the proof.
\end{proof}

Finally, we introduce a simple technical lemma.
\captionsetup{type=lemma}
\renewcommand{\thelemma}{4}
\begin{lemma}\label{lem:4}(see (25) from \cite{alacaoglu2021stochastic}) In the conditions of Algorithm \ref{alg:new} it is  applied that 
\begin{align*}
V(x, \overline{\omega}_s)-V(x^{k+1}_s, \overline{\omega}_s) = \frac{1}{K}\sum_{j=0}^{K-1}\left (V(x, x_{s-1}^j)-V(x_s^{k+1},x_{s-1}^j)\right).
\end{align*}

\end{lemma}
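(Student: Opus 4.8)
The plan is to prove the identity purely by unfolding the definition $V(u,v)=h(u)-h(v)-\langle\nabla h(v),u-v\rangle$ on each side and then invoking the fact that $\overline{w}_s$ is an average in the \emph{dual} space. Concretely, line \ref{Alg1Line11} of Algorithm \ref{alg:new} defines $\overline{w}_s$ through $\nabla h(\overline{w}_s)=\frac{1}{K}\sum_{j=0}^{K-1}\nabla h(x^j_{s-1})$, and this is the only property of $\overline{w}_s$ I will use. The structural observation that makes everything collapse is that on both sides each Bregman distance is measured against a \emph{fixed} second argument, so that the bulky $h(\cdot)$ values of those second points cancel in every difference, leaving only expressions that are linear in the corresponding $\nabla h$.

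First I would expand the left-hand side against the common base point $\overline{w}_s$. Since the terms $h(\overline{w}_s)$ and the inner products against $\overline{w}_s$ itself cancel, one obtains
\begin{align*}
V(x,\overline{w}_s)-V(x^{k+1}_s,\overline{w}_s)=h(x)-h(x^{k+1}_s)-\langle\nabla h(\overline{w}_s),\,x-x^{k+1}_s\rangle,
\end{align*}
so that the only place where $\overline{w}_s$ survives is inside the single gradient $\nabla h(\overline{w}_s)$.

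Next I would expand each summand on the right-hand side in exactly the same manner. For each fixed $j$ the base point is $x^j_{s-1}$, and the same cancellation gives
\begin{align*}
V(x,x^j_{s-1})-V(x^{k+1}_s,x^j_{s-1})=h(x)-h(x^{k+1}_s)-\langle\nabla h(x^j_{s-1}),\,x-x^{k+1}_s\rangle.
\end{align*}
Here the piece $h(x)-h(x^{k+1}_s)$ is independent of $j$, and the whole $j$-dependence sits in $\nabla h(x^j_{s-1})$. Averaging over $j=0,\ldots,K-1$ and pushing the average inside the inner product by linearity turns $\frac{1}{K}\sum_j\nabla h(x^j_{s-1})$ into $\nabla h(\overline{w}_s)$, reproducing the left-hand side computed above.

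The only genuine point requiring care — and the step I expect to be the main obstacle — is matching the index window $j=0,\ldots,K-1$ in the statement to the range over which $\overline{w}_s$ is actually defined in line \ref{Alg1Line11}; once one checks, using the boundary updates in lines \ref{Alg1Line12}--\ref{Alg1Line14}, that indeed $\nabla h(\overline{w}_s)=\frac{1}{K}\sum_{j=0}^{K-1}\nabla h(x^j_{s-1})$, the identity is an immediate consequence of the linearity of $V$ in $\nabla h$ of its second slot. Notably, no use is made of monotonicity, Lipschitzness, or strong convexity beyond the bare definition of the Bregman distance, which is why this is labelled a \emph{technical} lemma.
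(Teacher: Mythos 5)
Your proof is correct and is essentially the paper's argument written out in full: you verify by direct expansion that $V(x,z)-V(y,z)$ depends on $z$ only through $\nabla h(z)$ and linearly so, then invoke the dual-space averaging in line \ref{Alg1Line11}, which is exactly the one-line observation the paper's proof rests on. Your remark about matching the index window to line \ref{Alg1Line11} is a fair point, but that slight mismatch is inherited from the paper's own indexing and does not change the argument.
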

\begin{proof}
    Proof follows from the note that for any $x$, $y$ the expression $V(x, z) - V(y, z)$ is linear in terms of $\nabla h(z)$.
\end{proof}

Now we present the theorem to state the convergance of Algorithm \ref{alg:new}.
\begin{theorem}\label{theorem1}
Consider the problem \eqref{eq:VI}+\eqref{eq:sum} under Assumptions~\ref{ass1}, \ref{ass2} and \ref{ass3a}. Let  $\{x_B^k\}$ be the sequence generated by Algorithm~\ref{alg:new} with tuning of $\eta, \theta, \alpha, \beta, \gamma$  as follows:
\begin{align}
    0 < \gamma = p \leq \frac{1}{16}, \label{th1:gamma} 
\end{align} 
\begin{align}
    \quad \eta = \min\left\{\frac{\sqrt{\gamma b}}{8\bar L_2}, \frac{1}{8L_2}\right\}. \label{th1:eta}
\end{align} 
Then for $x_S = \frac{1}{KS} \sum\limits_{s=0}^{S-1}\sum\limits_{k=0}^{K-1} x_B^k$ it holds that
\begin{align}
    \mathbb{E}\left[\operatorname{Gap} (x_S)\right]
    \leq &  \frac{(2+K\gamma)}{\eta K S }\max\limits_{x\in \mathcal{C}}\left\{ V(x,x_0) \right\}\notag.
\end{align}

\end{theorem}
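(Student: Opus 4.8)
The plan is to derive a one-step inequality for the gap integrand $\langle F(u), x^{k+1}_s - u\rangle + g(x^{k+1}_s) - g(u)$ at an arbitrary comparison point $u\in\mathcal{C}$, and then sum it over the inner and outer loops. First I would apply Lemma~\ref{lem:1} to the prox step in line~\ref{Alg1Line8} (after multiplying the objective by $\eta$, so that the lemma is used with $g\mapsto\eta g$, $u\mapsto\eta\Delta^k$, $x_1=\overline{w}_s$, $x_2=x^k_s$) and evaluate the resulting inequality at $x=u$. After dividing by $\eta$ this gives
\begin{align*}
\langle \Delta^k, x^{k+1}_s - u\rangle + g(x^{k+1}_s) - g(u) \le{}& \tfrac{\gamma}{\eta}V(u,\overline{w}_s) + \tfrac{1-\gamma}{\eta}V(u,x^k_s) - \tfrac{1}{\eta}V(u,x^{k+1}_s) \\
&{}- \tfrac{\gamma}{\eta}V(x^{k+1}_s,\overline{w}_s) - \tfrac{1-\gamma}{\eta}V(x^{k+1}_s,x^k_s),
\end{align*}
which supplies the telescoping Bregman structure together with the crucial negative terms that will absorb all error contributions.

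Next I would pass from the stochastic estimator $\Delta^k$ to $F(u)$. Writing $\delta^k\eqdef\Delta^k-\mathbb{E}_k[\Delta^k]$ and recalling $\mathbb{E}_k[\Delta^k]=2F(x^k_s)-F(x^{k-1}_s)$ from Lemma~\ref{lem:2a}, I would use the optimistic identity
\begin{align*}
\langle 2F(x^k_s)-F(x^{k-1}_s),\, x^{k+1}_s-u\rangle ={}& \langle F(x^{k+1}_s), x^{k+1}_s-u\rangle + T^k_s - T^{k+1}_s \\
&{}+ \langle F(x^k_s)-F(x^{k-1}_s), x^{k+1}_s-x^k_s\rangle,
\end{align*}
with $T^k_s\eqdef\langle F(x^k_s)-F(x^{k-1}_s), x^k_s-u\rangle$, together with monotonicity $\langle F(u), x^{k+1}_s-u\rangle\le\langle F(x^{k+1}_s), x^{k+1}_s-u\rangle$ (Assumption~\ref{ass2}). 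Combining these with the previous display converts it into a bound on the gap integrand whose right-hand side is the Bregman terms above, plus the telescoping pair $T^{k+1}_s-T^k_s$, the noise term $-\langle\delta^k, x^{k+1}_s-u\rangle$, and the optimistic cross term $-\langle F(x^k_s)-F(x^{k-1}_s), x^{k+1}_s-x^k_s\rangle$.

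I would then dispose of the three error terms one by one. For the optimistic cross term I use $L_2$-Lipschitzness (Assumption~\ref{ass3a}), the norm comparisons $\|\cdot\|_*\le\|\cdot\|_2\le\|\cdot\|$, and Young's inequality to bound it by $\tfrac{1-\gamma}{4\eta}\|x^{k+1}_s-x^k_s\|^2+\tfrac{\eta L_2^2}{1-\gamma}\|x^k_s-x^{k-1}_s\|^2$; by \eqref{main_inequality} the first summand is swallowed by $-\tfrac{1-\gamma}{\eta}V(x^{k+1}_s,x^k_s)$, and after summation the second is dominated by the analogous negative term of the preceding step once $\eta\le 1/(8L_2)$. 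For the noise term I split $x^{k+1}_s-u=(x^k_s-u)+(x^{k+1}_s-x^k_s)$: on $-\langle\delta^k,x^{k+1}_s-x^k_s\rangle$ I apply Young and absorb the iterate part into the leftover Bregman negativity, while $\mathbb{E}\|\delta^k\|_*^2\le\mathbb{E}\|\delta^k\|_2^2$ is controlled by Lemma~\ref{lem:2a}, i.e.\ by $\tfrac{2\bar L_2^2}{b}\mathbb{E}[\|x^k_s-w_s\|^2+\|x^k_s-x^{k-1}_s\|^2]$, and the choice $\eta\le\sqrt{\gamma b}/(8\bar L_2)$ is exactly what makes these variance terms absorbable. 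On $-\langle\delta^k,x^k_s-u\rangle$, the part $\langle\delta^k,x^k_s\rangle$ has zero conditional mean (as $x^k_s$ is $\mathcal{F}_k$-measurable), and the remaining $\langle\delta^k,u\rangle$ is handled, after $\max_{u\in\mathcal{C}}$ and expectation over the full double sum, by the maximal inequality Lemma~\ref{lem:3a}.

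Finally I would sum over $k=0,\dots,K-1$ and $s=0,\dots,S-1$. The pair $T^{k+1}_s-T^k_s$ telescopes within an epoch to $T^K_s-T^0_s$, and since the algorithm sets $x^0_{s+1}=x^K_s$ and $x^{-1}_{s+1}=x^{K-1}_s$ (lines~\ref{Alg1Line12}, \ref{Alg1Line14}) it telescopes further across epochs to $T^K_{S-1}-T^0_0$, where $T^0_0=0$ because $x^{-1}_0=x^0_0$ and the surviving $T^K_{S-1}$ is bounded by Lipschitzness against the final negative Bregman term. The main obstacle is the negative-momentum bookkeeping: the coefficient mismatch in $\tfrac{1-\gamma}{\eta}V(u,x^k_s)-\tfrac1\eta V(u,x^{k+1}_s)$ leaves a residual $-\tfrac{\gamma}{\eta}\sum_k V(u,x^k_s)$, and the snapshot distances $\|x^k_s-w_s\|^2$ generated by the variance bound must be paid for; both are matched against the anchor terms $\tfrac{\gamma}{\eta}V(\cdot,\overline{w}_s)$ by invoking Lemma~\ref{lem:4}, which rewrites $V(\cdot,\overline{w}_s)$ as the average of $V(\cdot,x^j_{s-1})$ over the previous epoch, so that together with $w_s=\tfrac1K\sum_j x^j_{s-1}$ (line~\ref{Alg1Line10}) the momentum contributions telescope cleanly across epochs under \eqref{th1:gamma}--\eqref{th1:eta}. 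After this cancellation only the boundary term $\tfrac1\eta V(u,x_0)$ and the $\max_{u}V(u,x_0)$ coming from Lemma~\ref{lem:3a} remain, and collecting their multiplicities produces the factor $2+K\gamma$. Dividing by $KS$, using convexity of $g$ and linearity of $\langle F(u),\cdot\rangle$ so that $\operatorname{Gap}(x_S)$ is upper bounded by $\tfrac1{KS}\sum_{s,k}[\langle F(u),x^{k+1}_s-u\rangle+g(x^{k+1}_s)-g(u)]$ (Jensen's inequality), and finally taking $\max_{u\in\mathcal{C}}$ and expectation, then yields the claimed estimate.
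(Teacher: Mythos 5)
Your proposal is correct and follows essentially the same route as the paper's proof: Lemma~\ref{lem:1} applied to the prox step, the decomposition of $\Delta^k$ around $\mathbb{E}_k[\Delta^k]=2F(x^k_s)-F(x^{k-1}_s)$ with the optimistic telescoping pair, Young plus Assumption~\ref{ass3a} for the cross term, Lemma~\ref{lem:2a} for the variance, Lemma~\ref{lem:3a} for the maximal noise term, and Lemma~\ref{lem:4} with Jensen to convert the $\overline{w}_s$ anchor into the $-\tfrac{\gamma}{2}\|x^{k+1}_s-w_s\|^2$ term that absorbs the snapshot distances, followed by the cross-epoch telescoping via lines~\ref{Alg1Line12}--\ref{Alg1Line14} and the final averaging argument yielding the factor $2+K\gamma$. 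The only (cosmetic) difference is that you invoke monotonicity explicitly before applying Jensen to pass to the gap function, which is in fact a cleaner rendering of the paper's final step.
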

\begin{proof}
We start from using Lemma \ref{lem:1} after plugging parameters $u = \eta\Delta^k, \; x^\dag = x^{k+1}_s, \; x_1 = \overline{\omega}_s, \; x_2 = x^k_s$, and get 
\begin{align*}
      V(x, x^{k+1}_s) 
      \leq &\eta\langle \Delta^k, x - x^{k+1}_s \rangle - \gamma V(x^{k+1}_s, \overline{\omega}_s) + \gamma V(x, \overline\omega_s) 
    \\ & - (1-\gamma)\left(V(x^{k+1}_s, x^k_s) - V(x, x^k_s)\right) + \eta g(x) - \eta g(x^{k+1}_s).
\end{align*}
Then, after applying (\ref{lem2a:Ek}) from Lemma \ref{lem:2a} and simple algebra, we get
\begin{align*}
V(x, x^{k+1}_s) \leq & - \eta \langle\mathbb{E}_k\left[\Delta^k\right], x^{k+1}_s-x\rangle + \eta \langle\mathbb{E}_k\left[\Delta^k\right]-\Delta^k, x^{k+1}_s-x^k_s\rangle
\\&+ \eta \langle\mathbb{E}_k\left[\Delta^k\right]-\Delta^k, x^k_s-x\rangle- \gamma V(x^{k+1}_s, \overline{\omega}_s) + \gamma V(x_s, \overline{\omega}_s)
\\ & - (1-\gamma)(V(x^{k+1}_s, x^k_s) - V(x, x^k_s)) + \eta g(x) - \eta g(x^{k+1}_s)
    \\\leq & - \eta \langle F(x^k_s)+F(x^k_s)-F(x^{k-1}_s), x^{k+1}_s-x\rangle 
\\&+\eta \langle\mathbb{E}_k\left[\Delta^k\right]-\Delta^k, x^{k+1}_s-x^k_s\rangle + \eta \langle\mathbb{E}_k\left[\Delta^k\right]-\Delta^k, x^k_s-x\rangle
\\&- \gamma V(x^{k+1}, \overline{\omega}_s) + \gamma V(x, \overline{\omega}_s)
\\ & - (1-\gamma)(V(x^{k+1}_s, x^k_s) - V(x, x^k_s)) + \eta g(x) - \eta g(x^{k+1}_s)
     \\\leq & - \eta \langle F(x^k_s)-F(x^{k+1}_s)+F(x^k_s)-F(x^{k-1}_s), x^{k+1}_s-x\rangle
\\&- \eta \langle F(x^{k+1}_s), x^{k+1}_s-x \rangle + \eta \langle\mathbb{E}_k\left[\Delta^k\right]-\Delta^k, x^{k+1}_s-x^k_s\rangle
\\&+ \eta \langle\mathbb{E}_k\left[\Delta^k\right]-\Delta^k, x^k_s-x\rangle- \gamma V(x^{k+1}_s,\overline{\omega}_s) + \gamma V(x, \overline{\omega}_s) 
\\ & - (1-\gamma)(V(x^{k+1}_s, x^k_s) - V(x, x^k_s)) + \eta g(x) - \eta g(x^{k+1}_s) .
\end{align*}
By simple rearrangements, we obtain
\begin{align*}
    \eta (g(x^{k+1}_s)&-g(x)) +\eta \langle F(x^{k+1}_s), x^{k+1}_s-x \rangle 
    \\\leq & - \eta \langle F(x^k_s)-F(x^{k+1}_s)+F(x^k_s)-F(x^{k-1}_s), x^{k+1}_s-x\rangle
     \\&
      + \eta \langle\mathbb{E}_k\left[\Delta^k\right]-\Delta^k, x^{k+1}_s-x^k_s\rangle
\\&+ \eta \langle\mathbb{E}_k\left[\Delta^k\right]-\Delta^k, x^k_s-x\rangle- \gamma V(x^{k+1}_s, \overline{\omega}_s) + \gamma V(x, \overline{\omega}_s)
    \\ & - (1-\gamma)(V(x^{k+1}_s, x^k_s) - V(x, x^k_s))  - V(x, x^{k+1}_s) 
    \\\leq & - \eta \langle F(x^k_s)-F(x^{k+1}_s), x^{k+1}_s-x\rangle - \eta\langle F(x^k_s)-F(x^{k-1}_s), x^{k+1}_s-x\rangle
     \\&
      + \eta \langle\mathbb{E}_k\left[\Delta^k\right]-\Delta^k, x^{k+1}_s-x^k_s\rangle
\\&+ \eta \langle\mathbb{E}_k\left[\Delta^k\right]-\Delta^k, x^k_s-x\rangle- \gamma V(x^{k+1}, \overline{\omega}_s) + \gamma V(x, \overline{\omega}_s) 
\\ & - (1-\gamma)(V(x^{k+1}_s, x^k_s) - V(x, x^k_s))  - V(x, x^{k+1}_s)
    \\\leq & - \eta \langle F(x^k)-F(x^{k+1}_s), x^{k+1}_s-x\rangle - \eta\langle F(x^k_s)-F(x^{k-1}_s), x^{k+1}_s-x^k_s\rangle
\\&-\eta\langle F(x^k_s)-F(x^{k-1}_s), x^k_s-x\rangle
      + \eta \langle\mathbb{E}_k\left[\Delta^k\right]-\Delta^k, x^{k+1}_s-x^k_s\rangle
\\&+ \eta \langle\mathbb{E}_k\left[\Delta^k\right]-\Delta^k, x^k_s-x\rangle- \gamma V(x^{k+1}_s, \overline{\omega}_s) + \gamma V(x, \overline{\omega}_s)
    \\ & - (1-\gamma)(V(x^{k+1}_s, x^k_s) - V(x, x^k_s))  - V(x, x^{k+1}_s).
\end{align*}
Next, we apply Lemma \ref{lem:4} and get
\begin{align}
    \eta (g(x^{k+1}_s)&-g(x)) +\eta \langle F(x^{k+1}_s), x^{k+1}_s-x \rangle \notag
    \\\leq & - \eta \langle F(x^k)-F(x^{k+1}_s), x^{k+1}_s-x\rangle-\eta\langle F(x^k_s)-F(x^{k-1}_s), x^{k+1}_s-x^k_s\rangle\notag
    \\& - \eta\langle F(x^k_s)-F(x^{k-1}_s), x^k_s-x\rangle
      + \eta \langle\mathbb{E}_k\left[\Delta^k\right]-\Delta^k, x^{k+1}_s-x^k_s\rangle\notag
\\&+ \eta \langle\mathbb{E}_k\left[\Delta^k\right]-\Delta^k, x^k_s-x\rangle + \frac{ \gamma}{K}\sum_{j=0}^{K-1}\left (V(x, x_{s-1}^j)-V(x_s^{k+1},x_{s-1}^j)\right)\notag
\\ & - (1-\gamma)(V(x^{k+1}_s, x^k_s) - V(x, x^k_s))  - V(x, x^{k+1}_s)\notag
    \\=&- \eta \langle F(x^k)-F(x^{k+1}_s), x^{k+1}_s-x\rangle - \eta\langle F(x^k_s)-F(x^{k-1}_s), x^{k+1}_s-x^k_s\rangle\notag
    \\& - \eta\langle F(x^k_s)-F(x^{k-1}_s), x^k_s-x\rangle
      + \eta \langle\mathbb{E}_k\left[\Delta^k\right]-\Delta^k, x^{k+1}_s-x^k_s\rangle\notag
\\&+ \eta \langle\mathbb{E}_k\left[\Delta^k\right]-\Delta^k, x^k_s-x\rangle + \frac{ \gamma}{K}\sum_{j=0}^{K-1}V(x, x_{s-1}^j)-\frac{ \gamma}{K}\sum_{j=0}^{K-1}V(x_s^{k+1},x_{s-1}^j)\notag
\\ & - (1-\gamma)V(x^{k+1}_s, x^k_s) + (1-\gamma)V(x, x^k_s)  - V(x, x^{k+1}_s).\label{th1:before_Jensen}
\end{align}
Applying the fact (\ref{main_inequality}) to $\|x_s^{k+1}-x_{s-1}^j\|^2$, Jensen's inequality and line \ref{Alg1Line10} of Algorithm \ref{alg:new}, we state
\begin{align}
-\frac{ \gamma}{K}\sum_{j=0}^{K-1}V(x_s^{k+1},x_{s-1}^j) \leq & -\frac{ \gamma}{2K}\sum_{j=0}^{K-1}\|x_s^{k+1}-x_{s-1}^j\|^2  \notag
\\ \leq & -\frac{\gamma}{2}\left\|\frac{1}{K}\sum_{j=0}^{K-1} \left( x_s^{k+1}-x_{s-1}^j \right)\right\|^2\notag
\\ \leq & -\frac{ \gamma}{2}\|x_s^{k+1}-\omega_{s}\|^2. \label{th1:from_Jensen}
\end{align}
Substituting the fact (\ref{th1:from_Jensen}) to (\ref{th1:before_Jensen}), we estimate
\begin{align}
    \eta (g(x^{k+1}_s)-&g(x)) +\eta \langle F(x^{k+1}_s), x^{k+1}_s-x \rangle  \notag
    \\\leq &- \eta \langle F(x^k)-F(x^{k+1}_s), x^{k+1}_s-x\rangle - \eta\langle F(x^k_s)-F(x^{k-1}_s), x^{k+1}_s-x^k_s\rangle \notag
\\& - \eta\langle F(x^k_s)-F(x^{k-1}_s), x^k_s-x\rangle+ \eta \langle\mathbb{E}_k\left[\Delta^k\right]-\Delta^k, x^{k+1}_s-x^k_s\rangle  \notag
\\&+ \eta \langle\mathbb{E}_k\left[\Delta^k\right]-\Delta^k, x^k_s-x\rangle + \frac{ \gamma}{K}\sum_{j=0}^{K-1}V(x, x_{s-1}^j)-\frac{ \gamma}{2}\|x_s^{k+1}-\omega_{s}\|^2 \notag
\\ & - (1-\gamma)V(x^{k+1}_s, x^k_s) + (1-\gamma)V(x, x^k_s)  - V(x, x^{k+1}_s). \label{th1:res1}
\end{align}
For the next step we apply the Young's inequality (\ref{ap:Young}), Assumption \ref{ass3a}, the choice of step (\ref{th1:eta}) and the fact that $\|\cdot \|_2\leq \|\cdot \|$. This allows us to achieve
\begin{align}
    -\eta \langle F(x_s^k)- F(x_s^{k-1}), x_s^{k+1}-x_s^k \rangle \leq & 2\eta^2 \|F(x_s^k)- F(x_s^{k-1})\|^2_2 + \frac{1}{8} \| x_s^{k+1}-x_s^k \|^2_2 \notag 
    \\\leq & \bar{L}^2_2\eta^2 \|x_s^k- x_s^{k-1}\|^2_2 + \frac{1}{8} \| x_s^{k+1}-x_s^k \|^2_2 \notag
    \\ \leq & \frac{1}{32} \|x_s^k- x_s^{k-1}\|^2_2 + \frac{1}{8} \| x_s^{k+1}-x_s^k \|^2_2 \notag
    \\ \leq & \frac{1}{32} \|x_s^k- x_s^{k-1}\|^2 + \frac{1}{8} \| x_s^{k+1}-x_s^k \|^2  . \label{th1:res1:Young}
\end{align}
We substitute (\ref{th1:res1:Young}) to (\ref{th1:res1}) to get 

\begin{align}
    \eta (g(x^{k+1}_s)-&g(x)) + \langle F(x^{k+1}_s), x^{k+1}_s-x \rangle  \notag
    \\\leq &- \eta \langle F(x^k)-F(x^{k+1}_s), x^{k+1}_s-x\rangle -\eta\langle F(x^k_s)-F(x^{k-1}_s), x^k_s-x\rangle \notag
\\&+ \frac{1}{32} \|x_B^k- x_s^{k-1}\|^2 + \frac{1}{4} V(x_s^{k+1},x_B^k) + \eta \langle\mathbb{E}_k\left[\Delta^k\right]-\Delta^k, x^{k+1}_s-x^k_s\rangle  \notag
\\&+ \eta \langle\mathbb{E}_k\left[\Delta^k\right]-\Delta^k, x^k_s-x\rangle + \frac{ \gamma}{K}\sum_{j=0}^{K-1}V(x, x_{s-1}^j)-\frac{ \gamma}{2}\|x_s^{k+1}-\omega_{s}\|^2  \notag
\\ & - (1-\gamma)V(x^{k+1}_s, x^k_s) + (1-\gamma)V(x, x^k_s)  - V(x, x^{k+1}_s). \notag
\end{align}
Next, we sum for all $k$ from $0$ to $K-1$ and obtain
\begin{align}
    \eta \sum_{k=0}^{K-1}\Big[(g(x^{k+1}_s)-&g(x)) +\eta \langle F(x^{k+1}_s), x^{k+1}_s-x \rangle\Big]   \notag
    \\\leq &- \eta \sum_{k=0}^{K-1}\langle F(x^k_s)-F(x^{k+1}_s), x^{k+1}_s-x\rangle\notag
\\&- \eta \sum_{k=0}^{K-1}\langle F(x^k_s)-F(x^{k-1}_s), x^k_s-x\rangle \notag
\\&+ \frac{1}{32} \sum_{k=0}^{K-1}\|x_s^k- x_s^{k-1}\|^2 + \frac{1}{4}\sum_{k=0}^{K-1} V(x_s^{k+1},x_s^k)\notag
\\&+ \eta \sum_{k=0}^{K-1}\langle\mathbb{E}_k\left[\Delta^k\right]-\Delta^k, x^{k+1}_s-x^k_s\rangle  \notag
\\&+ \eta \sum_{k=0}^{K-1}\langle\mathbb{E}_k\left[\Delta^k\right]-\Delta^k, x^k_s-x\rangle  + \frac{ \gamma}{K}\sum_{k=0}^{K-1}\sum_{j=0}^{K-1}V(x, x_{s-1}^j)\notag
\\&-\frac{ \gamma}{2}\sum_{k=0}^{K-1}\|x_s^{k+1}-\omega_{s}\|^2 - (1-\gamma)\sum_{k=0}^{K-1}V(x^{k+1}_s, x^k_s) \notag
\\ &  +  (1-\gamma)\sum_{k=0}^{K-1}V(x, x^k_s)  - \sum_{k=0}^{K-1}V(x, x^{k+1}_s) \notag
    \\\leq &\eta \langle F(x^{-1}_s)-F(x^0_s), x_s^0 - x \rangle - \eta\langle F(x^{K-1}_s)-F(x^K_s), x^K_s-x\rangle \notag
    \\ & + \frac{1}{32} \sum_{k=0}^{K-1}\|x_s^k- x_s^{k-1}\|^2 + \frac{1}{4} \sum_{k=0}^{K-1} V(x_s^{k+1},x_s^k)  \notag
\\&+ \eta \sum_{k=0}^{K-1}\langle\mathbb{E}_k\left[\Delta^k\right]-\Delta^k, x^{k+1}_s-x^k_s\rangle \notag
\\& + \eta \sum_{k=0}^{K-1}\langle\mathbb{E}_k\left[\Delta^k\right]-\Delta^k, x^k_s-x\rangle \notag
\\& + \gamma \sum_{k=0}^{K-1}V(x, x_{s-1}^k) -\frac{ \gamma}{2}\sum_{k=0}^{K-1}\|x_s^{k+1}-\omega_{s}\|^2\notag
\\& - (1-\gamma)\sum_{k=0}^{K-1}V(x^{k+1}_s, x^k_s)\notag
  +  (1-\gamma)\sum_{k=0}^{K-1}V(x, x^k_s)\notag
\\&  - \sum_{k=0}^{K-1}V(x, x^{k+1}_s)
. \notag
\end{align}
We take sum for all $s$ from 0 to $S-1$ and apply lines \ref{Alg1Line12} and \ref{Alg1Line14} of Algorithm \ref{alg:new} to achieve
\begin{align}
    \eta \sum_{s=0}^{S-1}\sum_{k=0}^{K-1}\Big[(g(x^{k+1}_s)&-g(x)) +\eta \langle F(x^{k+1}_s), x^{k+1}_s-x \rangle\Big]  \notag
    \\\leq &\eta \sum_{s=0}^{S-1} \langle F(x^{-1}_s)-F(x^0_s), x_s^0 - x \rangle \notag
\\ & - \eta \sum_{s=0}^{S-1} \langle F(x^{K-1}_s)-F(x^K_s), x^K_s-x\rangle \notag
\\ & + \frac{1}{32} \sum_{s=0}^{S-1} \sum_{k=0}^{K-1}\|x_s^k- x_s^{k-1}\|^2 + \frac{1}{4} \sum_{s=0}^{S-1} \sum_{k=0}^{K-1} V(x_s^{k+1},x_s^k)  \notag
\\&+ \eta \sum_{s=0}^{S-1} \sum_{k=0}^{K-1}\langle\mathbb{E}_k\left[\Delta^k\right]-\Delta^k, x^{k+1}_s-x^k_s\rangle \notag
\\& + \eta \sum_{s=0}^{S-1} \sum_{k=0}^{K-1}\langle\mathbb{E}_k\left[\Delta^k\right]-\Delta^k, x^k_s-x\rangle \notag
\\& + \gamma \sum_{s=0}^{S-1} \sum_{k=0}^{K-1}V(x, x_{s-1}^k) -\frac{ \gamma}{2}\sum_{s=0}^{S-1} \sum_{k=0}^{K-1}\|x_s^{k+1}-\omega_{s}\|^2\notag
\\&- (1-\gamma)\sum_{s=0}^{S-1} \sum_{k=0}^{K-1}V(x^{k+1}_s, x^k_s)  +  (1-\gamma)\sum_{s=0}^{S-1} \sum_{k=0}^{K-1}V(x, x^k_s) \notag
\\&- \sum_{s=0}^{S-1} \sum_{k=0}^{K-1}V(x, x^{k+1}_s) \notag
    \\ = &\eta \sum_{s=0}^{S-1} \langle F(x^{-1}_s)-F(x^0_s), x_s^0 - x \rangle \notag
\\& - \eta \sum_{s=0}^{S-1} \langle F(x^{-1}_{s+1})-F(x^0_{s+1}), x^0_{s+1}-x\rangle \notag
\\ & + \frac{1}{32} \sum_{s=0}^{S-1} \sum_{k=0}^{K-1}\|x_B^k- x_s^{k-1}\|^2 + \frac{1}{4} \sum_{s=0}^{S-1} \sum_{k=0}^{K-1} V(x_s^{k+1},x_B^k)  \notag
\\&+ \eta \sum_{s=0}^{S-1} \sum_{k=0}^{K-1}\langle\mathbb{E}_k\left[\Delta^k\right]-\Delta^k, x^{k+1}_s-x^k_s\rangle \notag
\\& + \eta \sum_{s=0}^{S-1} \sum_{k=0}^{K-1}\langle\mathbb{E}_k\left[\Delta^k\right]-\Delta^k, x^k_s-x\rangle \notag
\\& + \gamma \sum_{s=0}^{S-1} \sum_{k=0}^{K-1}V(x, x_{s-1}^k) -\frac{ \gamma}{2}\sum_{s=0}^{S-1} \sum_{k=0}^{K-1}\|x_s^{k+1}-\omega_{s}\|^2\notag
\\&- (1-\gamma)\sum_{s=0}^{S-1} \sum_{k=0}^{K-1}V(x^{k+1}_s, x^k_s)  +  (1-\gamma)\sum_{s=0}^{S-1} \sum_{k=0}^{K-1}V(x, x^k_s) \notag
\\&- \sum_{s=0}^{S-1} \sum_{k=0}^{K-1}V(x, x^{k+1}_s). \label{th1:res2:sum}
\end{align}
Simple algebra and line \ref{Alg1Line2} from Algorithm \ref{alg:new} allows us to get 
\begin{align}
    \eta \sum_{s=0}^{S-1} \langle F(x^{-1}_s)&-F(x^0_s), x_s^0 - x \rangle - \eta \sum_{s=0}^{S-1} \langle F(x^{-1}_{s+1})-F(x^0_{s+1}), x^0_{s+1}-x\rangle \notag
    \\ = & \eta \langle F(x_0^S)-F(x_S^{-1}), x_S^0-x \rangle -\eta\langle F(x_0^0)-F(x_0^{-1}), x- x_0^0\rangle \notag
    \\ = & \eta \langle F(x_0^S)-F(x_S^{-1}), x_S^0-x \rangle\label{th1:res2:simple_algebra}
\end{align}
Applying the Young's inequality (\ref{ap:Young}), Assumption \ref{ass3a}, the choice of step (\ref{th1:eta}) and the fact that $\|\cdot \|_2\leq \|\cdot \|$, we get 
\begin{align}
     \eta \langle F(x^{0}_S)-F(x^{-1}_S), x^0_S-x\rangle \leq & \frac{\eta^2}{2}\|F(x_S^0)- F(x_S^{-1})\|_2^2 + \frac{1}{2}\|x_S^0 - x\|^2_2 \notag
     \\\leq & \frac{\eta^2 L^2_2}{2}\|x_S^{0}-x_S^{-1}\|^2_2+\frac{1}{2}\|x_S^0 - x\|^2_2\notag
     \\ \leq &\frac{1}{128}\|x_S^{0}-x_S^{-1}\|^2_2+\frac{1}{2}\|x_S^0- x\|^2_2\notag
     \\ \leq &\frac{1}{128}\|x_S^{0}-x_S^{-1}\|^2+\frac{1}{2}\|x_S^0- x\|^2
     .\label{th1:res2:Young}
\end{align}
Inequalities (\ref{th1:res2:Young}) and (\ref{th1:res2:simple_algebra}) allow us to turn (\ref{th1:res2:sum}) into
\begin{align}
    \eta \sum_{s=0}^{S-1}\sum_{k=0}^{K-1}\Big[(g(x^{k+1}_s)&-g(x)) +\eta \langle F(x^{k+1}_s), x^{k+1}_s-x \rangle\Big] \notag 
    \\\leq & \frac{1}{128}\|x_{S}^{0}-x_{S}^{-1}\|^2+ \frac{1}{2}\|x_S^0 - x\|^2 + \frac{1}{4} \sum_{s=0}^{S-1}\sum_{k=0}^{K-1} V(x_s^{k+1},x_s^k)\notag
\\& + \frac{1}{32} \sum_{s=0}^{S-1} \sum_{k=0}^{K-1}\|x_B^k- x_s^{k-1}\|^2\notag
\\& + \eta \sum_{s=0}^{S-1}\sum_{k=0}^{K-1}\langle\mathbb{E}_k\left[\Delta^k\right]-\Delta^k, x^{k+1}_s-x^k_s\rangle  \notag
\\&+ \eta \sum_{s=0}^{S-1}\sum_{k=0}^{K-1}\langle\mathbb{E}_k\left[\Delta^k\right]-\Delta^k, x^k_s-x\rangle + \gamma\sum_{s=1}^{S-1}\sum_{k=0}^{K-1}V(x, x_{s-1}^k) \notag
\\&-\frac{ \gamma}{2}\sum_{s=0}^{S-1}\sum_{k=0}^{K-1}\|x_s^{k+1}-\omega_{s}\|^2 - (1-\gamma)\sum_{s=0}^{S-1}\sum_{k=0}^{K-1}V(x^{k+1}_s, x^k_s) \notag
\\& + (1-\gamma)\sum_{s=0}^{S-1}\sum_{k=0}^{K-1}V(x, x^k_s) - \sum_{s=0}^{S-1}\sum_{k=0}^{K-1}V(x, x^{k+1}_s) \notag
    \\\leq & \frac{1}{128}\|x_{S-1}^{K}-x_{S-1}^{K-1}\|^2+ V(x, x_S^0) + \frac{1}{4} \sum_{s=0}^{S-1}\sum_{k=0}^{K-1} V(x_s^{k+1},x_s^k)\notag
\\& + \frac{1}{32} \sum_{s=0}^{S-1} \sum_{k=0}^{K-1}\|x_B^k- x_s^{k-1}\|^2\notag
\\& + \eta \sum_{s=0}^{S-1}\sum_{k=0}^{K-1}\langle\mathbb{E}_k\left[\Delta^k\right]-\Delta^k, x^{k+1}_s-x^k_s\rangle  \notag
\\&+ \eta \sum_{s=0}^{S-1}\sum_{k=0}^{K-1}\langle\mathbb{E}_k\left[\Delta^k\right]-\Delta^k, x^k_s-x\rangle + \gamma\sum_{s=0}^{S-1}\sum_{k=0}^{K-1}V(x, x_{s-1}^k) \notag
\\&-\frac{ \gamma}{2}\sum_{s=0}^{S-1}\sum_{k=0}^{K-1}\|x_s^{k+1}-\omega_{s}\|^2 - (1-\gamma)\sum_{s=0}^{S-1}\sum_{k=0}^{K-1}V(x^{k+1}_s, x^k_s) \notag
\\& + (1-\gamma)\sum_{s=0}^{S-1}\sum_{k=0}^{K-1}V(x, x^k_s) - \sum_{s=0}^{S-1}\sum_{k=0}^{K-1}V(x, x^{k+1}_s) . \label{th1:res3}
\end{align}
In this step, we again used lines \ref{Alg1Line12} and \ref{Alg1Line14} form Algorithm \ref{alg:new} and the fact (\ref{main_inequality}).
This result and line \ref{Alg1Line12} of Algorithm \ref{alg:new} gives us
\begin{align}
    \gamma\sum_{s=0}^{S-1}\sum_{k=0}^{K-1}V(x, &x_{s-1}^k) + (1-\gamma)\sum_{s=0}^{S-1}\sum_{k=0}^{K-1}V(x, x^k_s) - \sum_{s=0}^{S-1}\sum_{k=0}^{K-1}V(x, x^{k+1}_s)+ V(x, x_S^0) \notag
    \\  \leq &\ \gamma\sum_{k=0}^{K-1}V(x, x^k_{-1}) +\gamma\left(\sum_{s=0}^{S-2}\sum_{k=0}^{K-1}V(x, x_s^k)- \sum_{s=0}^{S-1}\sum_{k=0}^{K-1}V(x, x^k_s)\right)\notag
\\ & + \Bigg( \sum_{s=0}^{S-1}\sum_{k=0}^{K-1}V(x, x^k_s)- \sum_{s=0}^{S-1}\sum_{k=1}^{K}V(x, x^{k}_s) \Bigg) + V(x, x_S^0) \notag
    \\ \leq &\gamma\sum_{k=0}^{K-1}V(x, x^k_{-1}) -\gamma \sum_{k=0}^{K-1} V(x,x_{S-1}^k) + \sum_{s=0}^{S-1} V(x, x_s^0)\notag
\\ &- \sum_{s=0}^{S-1} V(x,x^K_s)+ V(x, x_S^0)\notag
    \\ = &\gamma K V(x, x^0_{0}) -\gamma \sum_{k=0}^{K-1} V(x,x_{S-1}^k)\notag
\\ & + \sum_{s=0}^{S-1} V(x, x_s^0)- \sum_{s=1}^{S} V(x,x^0_s) + V(x, x_S^0)\notag
    \\= &\gamma K V(x, x^0_{0}) - \gamma \sum_{k=0}^{K-1} V(x,x_{S-1}^k) + V(x, x_0^0) - V(x, x_S^0)+ V(x, x_S^0) \notag
\\\leq & (1 + \gamma K)V(x,x_0^0).\notag
\end{align}
We apply this result to $(\ref{th1:res3})$ and get
\begin{align}
    \eta \sum_{s=0}^{S-1}\sum_{k=0}^{K-1}\Big[(g(x^{k+1}_s)&-g(x)) +\eta \langle F(x^{k+1}_s), x^{k+1}_s-x \rangle\Big]  \notag
    \\ \leq &\frac{1}{128}\|x_{S-1}^{K}-x_{S-1}^{K-1}\|^2+\frac{1}{32} \sum_{s=0}^{S-1} \sum_{k=0}^{K-1}\|x_s^k- x_s^{k-1}\|^2 \notag
\\& + \frac{1}{4}  \sum_{s=0}^{S-1}\sum_{k=0}^{K-1} V(x_s^{k+1},x_s^k)\notag
\\ & +  \eta \sum_{s=0}^{S-1}\sum_{k=0}^{K-1}\Big[ \langle\mathbb{E}_k\left[\Delta^k\right]-\Delta^k, x^{k+1}_s-x^k_s\rangle\big] \notag
\\&+ \eta \sum_{s=0}^{S-1} \sum_{k=0}^{K-1}\Big[\langle\mathbb{E}_k\left[\Delta^k\right]-\Delta^k, x_s^k-x\rangle\Big]\notag
\\ & -\frac{ \gamma}{2}\sum_{s=0}^{S-1}\sum_{k=0}^{K-1}\|x_s^{k+1}-\omega_{s}\|^2- (1-\gamma)\sum_{s=0}^{S-1}\sum_{k=0}^{K-1}V(x^{k+1}_s, x^k_s)\notag
\\ & +(1 + \gamma K)V(x,x_0^0). \notag 
\end{align}
After that we take maximum and expectation of both sides, we used fact that the maximum of sum is less then the sum of maximums. In addition, here we use property of conditional expectation to obtain $\mathbb{E}\Bigg[ \sum_{s=0}^{S-1}\sum_{k=0}^{K-1}\Big[ \langle\mathbb{E}_k\left[\Delta^k\right]-\Delta^k, x^k_s\rangle\big] \Bigg] = 0.$ Thus,

\begin{align}
\mathbb{E}\Bigg[ \max\limits_{x\in \mathcal{C}} \Bigg\{\sum_{s=0}^{S-1}\sum_{k=0}^{K-1}&\Big[\eta (g(x^{k+1}_s)-g(x)) + \eta \langle F(x^{k+1}_s), x^{k+1}_s-x \rangle \Big] \Bigg\}\Bigg]\notag
    \\ 
    \leq &
    \frac{1}{128}\mathbb{E}\Bigg[\|x_{S-1}^{K}-x_{S-1}^{K-1}\|^2\Big]  +\frac{1}{32} \mathbb{E}\Bigg[ \sum_{s=0}^{S-1} \sum_{k=0}^{K-1}\|x_s^k- x_s^{k-1}\|^2 \Bigg]\notag
\\
&+ \left(\gamma-\frac{3}{4}\right) \mathbb{E}\Bigg[ \sum_{s=0}^{S-1}\sum_{k=0}^{K-1} V(x_s^{k+1},x_s^k) \Bigg]\notag
\\
& +  \eta \mathbb{E}\Bigg[ \sum_{s=0}^{S-1}\sum_{k=0}^{K-1}\Big[ \langle\mathbb{E}_k\left[\Delta^k\right]-\Delta^k, x^{k+1}_s-x^k_s\rangle\big] \Bigg]\notag
\\
&+ \eta \mathbb{E}\Bigg[ \max\limits_{x\in \mathcal{C}} \left\{ \sum_{s=0}^{S-1} \sum_{k=0}^{K-1}\Big[\langle\mathbb{E}_k\left[\Delta^k\right]-\Delta^k, x_s^k - x\rangle\Big] \right\} \Bigg] \notag
\\& - \frac{ \gamma}{2}\mathbb{E}\Bigg[ \sum_{s=0}^{S-1}\sum_{k=0}^{K-1}\|x_s^{k+1}-\omega_{s}\|^2 \Bigg]+\max\limits_{x\in \mathcal{C}} \Phi_0(x)\notag
    \\ 
    \leq &
    \frac{1}{128}\mathbb{E}\Big[\|x_{S-1}^{K}-x_{S-1}^{K-1}\|^2\Big] + \frac{1}{32} \mathbb{E}\Bigg[ \sum_{s=0}^{S-1} \sum_{k=0}^{K-1}\|x_s^k- x_s^{k-1}\|^2 \Bigg]\notag
\\
&+ \left(\gamma-\frac{3}{4}\right) \mathbb{E}\Bigg[ \sum_{s=0}^{S-1}\sum_{k=0}^{K-1} V(x_s^{k+1},x_s^k) \Bigg]\notag
\\
& +  \eta \mathbb{E}\Bigg[ \sum_{s=0}^{S-1}\sum_{k=0}^{K-1}\Big[ \langle\mathbb{E}_k\left[\Delta^k\right]-\Delta^k, x^{k+1}_s-x^k_s\rangle\big] \Bigg]\notag
\\
&+ \eta \mathbb{E}\Bigg[ \max\limits_{x\in \mathcal{C}} \left\{ \sum_{s=0}^{S-1} \sum_{k=0}^{K-1}\Big[\langle\mathbb{E}_k\left[\Delta^k\right]-\Delta^k, x\rangle\Big] \right\} \Bigg] \notag
\\& - \frac{ \gamma}{2}\mathbb{E}\Bigg[ \sum_{s=0}^{S-1}\sum_{k=0}^{K-1}\|x_s^{k+1}-\omega_{s}\|^2 \Bigg]+(1 + \gamma K)\max\limits_{x\in \mathcal{C}} V(x,x_0^0). \label{th1:res4}
\end{align}
Using the Young's inequality (\ref{ap:Young}), we can estimate
\begin{align}
\mathbb{E}\left[ \eta\langle\mathbb{E}_k[\Delta^k] - \Delta^k, x_s^{k+1} - x_s^k\rangle \right] 
    \leq 2\eta^2\mathbb{E}\left[\|\mathbb{E}_k[\Delta^k]-\Delta^k\|^2_2  \right] 
+ \frac{1}{8}\mathbb{E}\left[\|x_s^{k+1} - x_s^k\|^2_2\right]. \notag
\end{align}
Then
\begin{align}
    \eta \mathbb{E}\Bigg[ \sum_{s=0}^{S-1}\sum_{k=0}^{K-1}&\Big[ \langle\mathbb{E}_k \left[\Delta^k\right]-\Delta^k, x^{k+1}_s-x^k_s\rangle\big] \Bigg]\notag
    \\
    \leq&2\eta^2  \sum_{s=0}^{S-1}\sum_{k=0}^{K-1} \mathbb{E} \|\mathbb{E}_k\left[\Delta^k\right]-\Delta^k \|^2_2
+\frac{1}{8}\sum_{s=0}^{S-1}\sum_{k=0}^{K-1} \Bigg[ \mathbb{E}\left[\|x_s^{k+1} - x_s^k\|^2_2\right] \Bigg].
\label{th1:res4:est}
\end{align}
$\mathbb{E}_k\left[\Delta^k\right]-\Delta^k$ is a stochastic porcess with $\mathbb{E}[\mathbb{E}_k\left[\Delta^k\right]-\Delta^k]=0$. Therefore, according to Lemma \ref{lem:3a}
\begin{align}
    \eta \mathbb{E}\Bigg[\max\limits_{x\in \mathcal{C}} \Bigg\{ \sum_{s=0}^{S-1}\sum_{k=0}^{K-1} \langle\mathbb{E}_k &\left[\Delta^k\right] -\Delta^k, x\rangle \Bigg\} \Bigg]\notag
    \\ \leq &
    \frac{1}{2} \max\limits_{x\in \mathcal{C}} \|x - x^0_0\|^2_2
+ \frac{\eta^2}{2}  \sum_{s=0}^{S-1}\sum_{k=0}^{K-1} \mathbb{E} \|\mathbb{E}_k\left[\Delta^k\right]-\Delta^k \|^2_2.\label{th1:res4:lem3}
\end{align}
Applying (\ref{th1:res4:est}) and (\ref{th1:res4:lem3}) to (\ref{th1:res4}), we get
\begin{align}
\mathbb{E}\Bigg[ \max\limits_{x\in \mathcal{C}} \Bigg\{\sum_{s=0}^{S-1}\sum_{k=0}^{K-1}\Big[&\eta (g(x^{k+1}_s)-g(x)) + \eta \langle F(x^{k+1}_s), x^{k+1}_s-x \rangle \Big] \Bigg\}\Bigg]\notag
    \\
    \leq&
    \frac{1}{128}\mathbb{E}\Big[\|x_{S-1}^{K}-x_{S-1}^{K-1}\|^2\Big]+\frac{1}{32} \mathbb{E}\Bigg[ \sum_{s=0}^{S-1} \sum_{k=0}^{K-1}\|x_s^k- x_s^{k-1}\|^2 \Bigg]\notag
\\
&+ \left(\gamma-\frac{3}{4}\right) \mathbb{E}\Bigg[ \sum_{s=0}^{S-1}\sum_{k=0}^{K-1} V(x_s^{k+1},x_s^k) \Bigg]\notag
\\
&+2\eta^2  \sum_{s=0}^{S-1}\sum_{k=0}^{K-1} \mathbb{E} \|\mathbb{E}_k\left[\Delta^k\right]-\Delta^k \|^2_2\notag
\\&+\frac{1}{8}\sum_{s=0}^{S-1}\sum_{k=0}^{K-1} \Bigg[ \mathbb{E}\left[\|x_s^{k+1} - x_B^k\|^2_2\right] \Bigg]\notag
\\
&+\frac{1}{2} \max\limits_{x\in \mathcal{C}} \|x - x^0_0\|^2_2 + \frac{\eta^2}{2}  \sum_{s=0}^{S-1}\sum_{k=0}^{K-1} \mathbb{E} \|\mathbb{E}_k\left[\Delta^k\right]-\Delta^k \|^2_2\notag
\\
& - \frac{ \gamma}{2}\mathbb{E}\Bigg[ \sum_{s=0}^{S-1}\sum_{k=0}^{K-1}\|x_s^{k+1}-\omega_{s}\|^2 \Bigg]+(1 + \gamma K)\max\limits_{x\in \mathcal{C}} V(x,x_0^0) \notag
    \\
    \leq&
    \frac{1}{128}\mathbb{E}\Big[\|x_{S-1}^{K}-x_{S-1}^{K-1}\|^2\Big] +\frac{1}{32} \mathbb{E}\Bigg[ \sum_{s=0}^{S-1} \sum_{k=0}^{K-1}\|x_s^k- x_s^{k-1}\|^2 \Bigg]\notag
\\
&+ \left(\gamma-\frac{3}{4}\right) \mathbb{E}\Bigg[ \sum_{s=0}^{S-1}\sum_{k=0}^{K-1} V(x_s^{k+1},x_B^k) \Bigg]\notag
\\ &+\frac{5\eta^2}{2}  \sum_{s=0}^{S-1}\sum_{k=0}^{K-1} \mathbb{E} \|\mathbb{E}_k\left[\Delta^k\right]-\Delta^k \|^2_2\notag
\\&
+\frac{1}{8}\sum_{s=0}^{S-1}\sum_{k=0}^{K-1} \Bigg[ \mathbb{E}\left[\|x_s^{k+1} - x_B^k\|^2_2\right] \Bigg]+ \frac{1}{2} \max\limits_{x\in \mathcal{C}} \|x - x^0_0\|^2_2 \notag
\\&
- \frac{ \gamma}{2}\mathbb{E}\Bigg[ \sum_{s=0}^{S-1}\sum_{k=0}^{K-1}\|x_s^{k+1}-\omega_{s}\|^2 \Bigg]+(1 + \gamma K)\max\limits_{x\in \mathcal{C}} V(x,x_0^0). \label{th1:before_CSineq}
\end{align}
The Cauchy–Schwarz(\ref{ap:Cauchy–Schwarz}) inequality allows us to estimate 
\begin{align}
- \frac{ \gamma}{2}\mathbb{E}\Bigg[ \sum_{s=0}^{S-1}&\sum_{k=0}^{K-1}\|x_s^{k+1}-\omega_{s}\|^2 \Bigg] \notag
\\&\leq - \frac{ \gamma}{4}\mathbb{E}\Bigg[ \sum_{s=0}^{S-1}\sum_{k=0}^{K-1}\|x_s^k-\omega_{s}\|^2 \Bigg] + \frac{ \gamma}{2} \mathbb{E}\Bigg[ \sum_{s=0}^{S-1}\sum_{k=0}^{K-1}\|x_s^{k+1}-x_s^k\|^2 \Bigg]\label{th1:CSineq}
\end{align}
Applying Lemma \ref{lem:2a}, the fact (\ref{main_inequality}), property $\|\cdot \|_2\leq \|\cdot \|$ and (\ref{th1:CSineq}) to (\ref{th1:before_CSineq}), we achieve
\begin{align}
\mathbb{E}\Bigg[ \max\limits_{x\in \mathcal{C}} \Bigg\{&\sum_{s=0}^{S-1}\sum_{k=0}^{K-1}\Big[\eta (g(x^{k+1}_s)-g(x)) + \eta \langle F(x^{k+1}_s), x^{k+1}_s-x \rangle \Big] \Bigg\}\Bigg]\notag
    \\
    \leq &
    \frac{1}{128}\mathbb{E}\Bigg[\|x_{S-1}^{K}-x_{S-1}^{K-1}\|^2\Big] + \frac{1}{32} \mathbb{E}\Bigg[ \sum_{s=0}^{S-1} \sum_{k=0}^{K-1}\|x_s^k- x_s^{k-1}\|^2 \Bigg]\notag
\\
&+ \left(\gamma-\frac{3}{4}\right) \mathbb{E}\Bigg[ \sum_{s=0}^{S-1}\sum_{k=0}^{K-1} V(x_s^{k+1},x_s^k) \Bigg]\notag
\\
&+\frac{5\eta^2}{2}   \sum_{s=0}^{S-1}\sum_{k=0}^{K-1} \Bigg[{\frac{2{\overline{{L}}}_2^{2}}{b}}\mathbb{E}\left[\|x^{k}_s-w_{s}\|^2+\|x^{k}_s - x^{k-1}_s\|^2 \right]\Bigg]\notag
\\
&+\frac{1}{8}\sum_{s=0}^{S-1}\sum_{k=0}^{K-1} \Bigg[ \mathbb{E}\left[\|x_s^{k+1} - x_s^k\|^2_2\right] \Bigg]+ \frac{1}{2} \max\limits_{x\in \mathcal{C}} \|x - x^0_0\|^2_2  \notag
\\
& - \frac{ \gamma}{2}\mathbb{E}\Bigg[ \sum_{s=0}^{S-1}\sum_{k=0}^{K-1}\|x_s^{k+1}-\omega_{s}\|^2 \Bigg]+(1 + \gamma K)\max\limits_{x\in \mathcal{C}} V(x,x_0^0)\notag
    \\
    \leq &
    \frac{1}{64}\mathbb{E} \left[ V(x_{S-1}^{K},x_{S-1}^{K-1})\right] + \frac{1}{16} \mathbb{E}\Bigg[ \sum_{s=0}^{S-1} \sum_{k=0}^{K-1}V(x_s^k, x_s^{k-1}) \Bigg]\notag
\\
&+ \left(\gamma-\frac{3}{4}\right) \mathbb{E}\Bigg[ \sum_{s=0}^{S-1}\sum_{k=0}^{K-1} V(x_s^{k+1},x_s^k) \Bigg] +\frac{1}{4}\sum_{s=0}^{S-1}\sum_{k=0}^{K-1} \Bigg[ \mathbb{E}\left[V(x_s^{k+1}, x_s^k)\right] \Bigg]\notag
\\
&+\frac{5\eta^2}{2}   \sum_{s=0}^{S-1}\sum_{k=0}^{K-1} \Bigg[{\frac{{\overline{{L}}}_2^{2}}{b}}\mathbb{E}\left[2\|x^{k}_s-w_{s}\|^2+V\left(x^{k}_s,x^{k-1}_s\right)\right]\Bigg]+ \max\limits_{x\in \mathcal{C}} V(x,x_0^0)\notag
\\
&- \frac{ \gamma}{4}\mathbb{E}\Bigg[ \sum_{s=0}^{S-1}\sum_{k=0}^{K-1}\|x_s^k-\omega_{s}\|^2 \Bigg] + \frac{ \gamma}{2} \mathbb{E}\Bigg[ \sum_{s=0}^{S-1}\sum_{k=0}^{K-1}\|x_s^{k+1}-x_s^k\|^2 \Bigg]\notag
\\
&+(1 + \gamma K)\max\limits_{x\in \mathcal{C}} V(x,x_0^0)\notag
    \\
    \leq&
    \frac{1}{64}\mathbb{E} \left[ V(x_{S-1}^{K},x_{S-1}^{K-1})\right]+\left(\frac{1}{16}+ \frac{{5\overline{{L}}_2^2\eta^2}}{2b}\right) \mathbb{E}\Bigg[ \sum_{s=0}^{S-1} \sum_{k=0}^{K-1}V\left(x_s^k, x_s^{k-1}\right)^2 \Bigg]\notag
\\
&+ \left(\gamma-\frac{1}{2}\right) \mathbb{E}\Bigg[ \sum_{s=0}^{S-1}\sum_{k=0}^{K-1} V(x_s^{k+1},x_s^k) \Bigg]\notag
\\
&+\frac{5{\overline{{L}}_2^2\eta^2}}{b}  \sum_{s=0}^{S-1}\sum_{k=0}^{K-1} \Bigg[\mathbb{E}\|x_s^k- w_s\|^2\Bigg]+\max\limits_{x\in \mathcal{C}} V(x,x_0^0)\notag
\\
&- \frac{ \gamma}{4}\mathbb{E}\Bigg[ \sum_{s=0}^{S-1}\sum_{k=0}^{K-1}\|x_s^k-\omega_{s}\|^2 \Bigg] + \gamma \mathbb{E}\Bigg[ \sum_{s=0}^{S-1}\sum_{k=0}^{K-1}V(x_s^{k+1},x_s^k) \Bigg]\notag
\\
&+(1 + \gamma K)\max\limits_{x\in \mathcal{C}} V(x,x_0^0). \label{th1:res6}
\end{align}
Next, we apply the fact (\ref{main_inequality}), 
inequality (\ref{th1:gamma}), lines \ref{Alg1Line12} and \ref{Alg1Line14} of Algorithm \ref{alg:new}, the choice of the step (\ref{th1:eta}) and the initialization (line \ref{Alg1Line2}) to get
\begin{align}
\frac{1}{64}\mathbb{E} & \left[ V(x_{S-1}^{K}, x_{S-1}^{K-1})\right]+\left(\frac{1}{16}+ \frac{{5\overline{{L}}_2^2\eta^2}}{2b}\right) \mathbb{E}\Bigg[ \sum_{s=0}^{S-1} \sum_{k=0}^{K-1}V\left(x_s^k, x_s^{k-1}\right) \Bigg]\notag
\\&+ \left(\gamma-\frac{1}{2}\right) \mathbb{E}\Bigg[ \sum_{s=0}^{S-1}\sum_{k=0}^{K-1} V(x_s^{k+1},x_s^k) \Bigg]+\gamma \mathbb{E}\Bigg[ \sum_{s=0}^{S-1}\sum_{k=0}^{K-1}V(x_s^{k+1},x_s^k) \Bigg]\notag
    \\ \leq & \frac{1}{64}\mathbb{E} \left[ V(x_{S-1}^{K},x_{S-1}^{K-1})\right] + \left(\frac{1}{16}+ \frac{{5\overline{{L}}_2^2\eta^2}}{2b}\right) \mathbb{E}\Bigg[ \sum_{s=0}^{S-1} \sum_{k=1}^{K-1}V\left(x_s^k, x_s^{k-1}\right) \Bigg] \notag
\\ &+ \left(\frac{1}{16}+ \frac{{5\overline{{L}}_2^2\eta^2}}{2b}\right) \mathbb{E}\Bigg[ \sum_{s=0}^{S-1} V\left(x_s^0, x_s^{-1}\right) \Bigg] \notag
\\ &+ \left(2\gamma-\frac{1}{2}\right) \mathbb{E}\Bigg[ \sum_{s=0}^{S-1}\sum_{k=1}^{K-1} V(x_s^k,x_s^{k-1}) \Bigg] + \left(\gamma-\frac{1}{2}\right) \mathbb{E}\Bigg[ \sum_{s=0}^{S-1} V(x_s^k,x_s^{K-1}) \Bigg]\notag
    \\ \leq & \left(\frac{1}{16}+ \frac{{5\overline{{L}}_2^2\eta^2}}{2b} + 2\gamma-\frac{1}{2} \right)\mathbb{E}\Bigg[ \sum_{s=0}^{S-1} \sum_{k=1}^{K-1}V\left(x_s^k, x_s^{k-1}\right) \Bigg]
      \notag
\\ & + \frac{1}{64}\mathbb{E} \left[ V(x_{S-1}^{K},x_{S-1}^{K-1})\right] + \left(\frac{1}{16}+ \frac{{5\overline{{L}}_2^2\eta^2}}{2b}\right) \mathbb{E}\Bigg[ \sum_{s=0}^{S-1} V\left(x_s^0, x_s^{-1}\right) \Bigg] \notag
\\ & + \left(2\gamma-\frac{1}{2}\right) \mathbb{E}\Bigg[ \sum_{s=0}^{S-1} V(x_s^k,x_s^{K-1}) \Bigg]\notag
    \\ \leq & \left(\frac{1}{16}+ \frac{{5\overline{{L}}_2^2\eta^2}}{2b} + 2\gamma-\frac{1}{2} \right)\mathbb{E}\Bigg[ \sum_{s=0}^{S-1} \sum_{k=1}^{K-1}V\left(x_s^k, x_s^{k-1}\right) \Bigg]
      \notag
\\ & + \frac{1}{64}\mathbb{E} \left[ V(x_{S}^{0},x_{S}^{-1})\right] + \left(\frac{1}{16}+ \frac{{5\overline{{L}}_2^2\eta^2}}{2b}\right) \mathbb{E}\Bigg[ \sum_{s=0}^{S-1} V\left(x_s^0, x_s^{-1}\right) \Bigg] \notag
\\ &+ \left(2\gamma-\frac{1}{2}\right) \mathbb{E}\Bigg[ \sum_{s=1}^{S} V(x_s^0,x_s^{-1}) \Bigg]\notag
    \\ \leq & \left(\frac{1}{16}+ \frac{{5\overline{{L}}_2^2\eta^2}}{2b} + 2\gamma-\frac{1}{2} \right)\mathbb{E}\Bigg[ \sum_{s=0}^{S-1} \sum_{k=1}^{K-1}V\left(x_s^k, x_s^{k-1}\right) \Bigg]
      \notag
\\ & + \left(\frac{1}{16}+ \frac{{5\overline{{L}}_2^2\eta^2}}{2b} + 2\gamma-\frac{1}{2}\right) \mathbb{E}\Bigg[ \sum_{s=1}^{S} V\left(x_s^0, x_s^{-1}\right) \Bigg] \notag
\\ & + \left(\frac{1}{16}+ \frac{{5\overline{{L}}_2^2\eta^2}}{2b}\right) \mathbb{E}\Bigg[  V\left(x_0^0, x_0^{-1}\right) \Bigg]\notag
    \\ \leq & \left(\frac{1}{16}+ \frac{5}{2}\cdot \frac{1}{64}\cdot \frac{1}{16} + \frac{2}{16} -\frac{1}{2} \right)\mathbb{E}\Bigg[ \sum_{s=0}^{S-1} \sum_{k=1}^{K-1}V\left(x_s^k, x_s^{k-1}\right) \Bigg]
      \notag
\\ & + \left(\frac{1}{16}+ \frac{5}{2}\cdot \frac{1}{64}\cdot \frac{1}{16} + \frac{2}{16} -\frac{1}{2} \right) \mathbb{E}\Bigg[ \sum_{s=1}^{S} V\left(x_s^0, x_s^{-1}\right) \Bigg] \notag
\\ & + \left(\frac{1}{16}+ \frac{{5\overline{{L}}_2^2\eta^2}}{2b}\right) \mathbb{E}\Bigg[  V\left(x_0^0, x_0^{-1}\right) \Bigg] \leq 0 
,\label{th1:res6:est1}
\end{align}
and 
\begin{align}
    \frac{5{\overline{{L}}_2^2\eta^2}}{b} & \sum_{s=0}^{S-1}\sum_{k=0}^{K-1} \Bigg[\mathbb{E}\|x_s^k- w_s\|^2\Bigg] - \frac{ \gamma}{4}\mathbb{E}\Bigg[ \sum_{s=0}^{S-1}\sum_{k=0}^{K-1}\|x_s^k-\omega_{s}\|^2 \Bigg]\notag
    \\\leq & \left( \frac{5{\overline{{L}}_2^2\eta^2}}{b} - \frac{\gamma}{4} \right) \mathbb{E}\Bigg[ \sum_{s=0}^{S-1}\sum_{k=0}^{K-1}\|x_s^{k+1}-\omega_{s}\|^2 \Bigg]\notag
    \\\leq & \left( \frac{5\gamma}{64} - \frac{\gamma}{4} \right) \mathbb{E}\Bigg[ \sum_{s=0}^{S-1}\sum_{k=0}^{K-1}\|x_s^{k+1}-\omega_{s}\|^2 \Bigg]
    \leq 0. \label{th1:res6:est2}
\end{align}
Applying (\ref{th1:res6:est1}) and (\ref{th1:res6:est2}) to (\ref{th1:res6}), we achieve
\begin{align}
\mathbb{E}\Bigg[ \max\limits_{x\in \mathcal{C}} \Bigg\{\sum_{s=0}^{S-1}\sum_{k=0}^{K-1}&\Big[\eta (g(x^{k+1}_s)-g(x)) + \eta \langle F(x^{k+1}_s), x^{k+1}_s-x \rangle \Big] \Bigg\}\Bigg]\notag
    \\
    \leq&\frac{5{\overline{{L}}_2^2\eta^2}}{b}  \sum_{s=0}^{S-1}\sum_{k=0}^{K-1} \Bigg[\mathbb{E}\|x_s^k- w_s\|^2\Bigg]+\max\limits_{x\in \mathcal{C}} V(x,x_0^0) \notag
\\
& - \frac{ \gamma}{4}\mathbb{E}\Bigg[ \sum_{s=0}^{S-1}\sum_{k=0}^{K-1}\|x_s^{k+1}-\omega_{s}\|^2 \Bigg] +(1 + \gamma K)\max\limits_{x\in \mathcal{C}} V(x,x_0^0) \notag
    \\
    \leq &\max\limits_{x\in \mathcal{C}}\left\{ V(x,x_0^0) \right\}  +(1 + \gamma K)\max\limits_{x\in \mathcal{C}} V(x,x_0^0)\notag
    \\
    = & (2+\gamma K)\max\limits_{x\in \mathcal{C}}\left\{ V(x,x_0) \right\}. \label{th1:final_est}
\end{align}

Next we use (\ref{gap}) and the fact that
\begin{align*}
\sum_{s=0}^{S-1}\sum_{k=0}^{K-1}\Big[\eta (g(x^{k+1}_s)&-g(x)) + \eta \langle F(x^{k+1}_s), x^{k+1}_s-x \rangle \Big]
    \\
    \geq& \eta \left(g\left(\sum_{s=0}^{S-1}\sum_{k=0}^{K-1}x^{k+1}_s\right)-g(x)\right) 
\\&+ \eta \left\langle F\left(\sum_{s=0}^{S-1}\sum_{k=0}^{K-1}x^{k+1}_s\right), \sum_{s=0}^{S-1}\sum_{k=0}^{K-1}x^{k+1}_s-x \right\rangle
    \\ \geq & \eta KS \left( g(x_S)-g(x) + \langle F(x_S), x_S-x \rangle \right)
\end{align*}
to get
\begin{align}
    \eta K S \mathbb{E}&\left[\operatorname{Gap} (x_S)\right]\notag
    \\ \leq & \mathbb{E}\Bigg[ \max\limits_{x\in \mathcal{C}} \Bigg\{\sum_{s=0}^{S-1}\sum_{k=0}^{K-1}\Big[\eta (g(x^{k+1}_s)-g(x)) +& \eta \langle F(x^{k+1}_s), x^{k+1}_s-x \rangle \Big] \Bigg\}\Bigg]
     .\label{th1:gap_ineq}
\end{align}
Using (\ref{th1:gap_ineq}) with (\ref{th1:final_est}), we achieve the result
\begin{align}
    \eta K S \mathbb{E}\left[\operatorname{Gap} (x_S)\right]
    \leq  (2+K\gamma)\max\limits_{x\in \mathcal{C}} V(x,x_0^0)\notag.
\end{align}
Then,
\begin{align}
    \mathbb{E}\left[\operatorname{Gap} (x_S)\right]
    \leq &  \frac{(2+K\gamma)}{\eta K S }\max\limits_{x\in \mathcal{C}} V(x,x_0^0)\label{converg}.
\end{align}
This finishes the proof.

\end{proof}

Now we are ready to obtain complexity of Algorithm \ref{alg:new}. We choice $\gamma = \frac{1}{K}$ to achieve $O(1)$ in the numerator of  estimation (\ref{converg}). In addition, it is natural to take $K = \frac{M}{3b}$ due to triple operator call in line \ref{Alg1Line7}.
\begin{corollary}
    \label{cor1}
    Let $K = \frac{M}{3b}$, $\gamma = \frac{1}{K} = \frac{3b}{M}$ and $\eta = \min \left\{\frac{\sqrt{\gamma b}}{2\bar L_2}, \frac{1}{8L_2}\right\} =\min \left\{ \frac{\sqrt{b}}{2\bar L_2}\cdot\sqrt{\frac{3b}{M}}, \frac{1}{8L_2}\right\}$ and $b \leq \frac{\sqrt{M}\bar L_2}{L_2}$. Then the total complexity of Algorithm \ref{alg:new} to reach $\epsilon$-accuracy is $\cO (M + \frac{L\sqrt{M}}{\epsilon})$.
\end{corollary}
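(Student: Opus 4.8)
The plan is to substitute the chosen parameters into the convergence estimate \eqref{converg} of Theorem~\ref{theorem1} and then convert the number of outer epochs $S$ into a count of oracle calls. First I would exploit $\gamma = 1/K$, which makes $2 + K\gamma = 3 = \cO(1)$, so \eqref{converg} becomes
\begin{equation*}
    \mathbb{E}\left[\operatorname{Gap}(x_S)\right] \;\leq\; \frac{3}{\eta K S}\,\max_{x\in\mathcal{C}} V(x, x_0^0).
\end{equation*}
Writing $D \eqdef \max_{x\in\mathcal{C}} V(x, x_0^0) = \cO(1)$ (a constant, since $\mathcal{C}$ is a fixed compact set), requiring the right-hand side to fall below $\epsilon$ shows that $S = \cO\!\big(1 + D/(\eta K \epsilon)\big)$ outer iterations suffice, the additive $1$ recording that at least one epoch is always executed.

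Next I would price one outer iteration. The snapshot term demands a single evaluation of the full operator $F(w_s)$, costing $M$ oracle calls, while each of the $K$ inner steps assembles $\Delta^k$ from the batched values $F_j(x_s^k)$, $F_j(w_s)$, $F_j(x_s^{k-1})$ in line~\ref{Alg1Line7}, i.e. $3b$ calls. With $K = M/(3b)$ the inner loop contributes $3bK = M$ calls, so one outer iteration costs $M + 3bK = 2M = \cO(M)$. Multiplying by $S$ and using $M/K = 3b$ then gives the total
\begin{equation*}
    \cO(M\cdot S) = \cO\!\left(M + \frac{M D}{\eta K \epsilon}\right) = \cO\!\left(M + \frac{b D}{\eta\,\epsilon}\right).
\end{equation*}

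It remains to bound $b/\eta$, which is where the batch-size constraint enters. Since $\eta = \min\{\sqrt{\gamma b}/(2\bar{L}_2),\,1/(8L_2)\}$, we have $b/\eta = \max\{2\bar{L}_2 b/\sqrt{\gamma b},\,8L_2 b\}$; substituting $\gamma = 3b/M$ simplifies $b/\sqrt{\gamma b}$ to $\sqrt{M/3}$, so
\begin{equation*}
    \frac{b}{\eta} = \max\!\left\{\tfrac{2}{\sqrt{3}}\,\bar{L}_2\sqrt{M},\; 8 L_2 b\right\}.
\end{equation*}
The hypothesis $b \leq \sqrt{M}\,\bar{L}_2/L_2$ is precisely what yields $L_2 b \leq \bar{L}_2\sqrt{M}$, so the second branch is dominated by the first and $b/\eta = \cO(\bar{L}_2\sqrt{M})$. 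Feeding this into the total gives $\cO\!\big(M + \bar{L}_2\sqrt{M}/\epsilon\big)$, i.e. the claimed $\cO(M + L\sqrt{M}/\epsilon)$ with $L$ read as the averaged Lipschitz constant $\bar{L}_2$ entering the step size; crucially $b$ has cancelled from the leading term, which is exactly the advertised batch-independence.

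The step I expect to be the main obstacle is the last one: one must check that the restriction on $b$ dovetails with the two branches of the $\min$ defining $\eta$, so that the $L_2 b$ term never dominates and the explicit dependence on $b$ disappears from the dominant term. The rest is bookkeeping --- keeping the additive $M$ (from the mandatory full-gradient snapshot together with the ``at least one epoch'' contribution) alongside the $1/\epsilon$ term, and confirming that neither $D$ nor the $\cO(1)$ numerator conceals any hidden dependence on $M$ or $b$.
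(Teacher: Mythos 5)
Your proposal is correct and follows essentially the same route as the paper: plug the parameter choices into the bound of Theorem~\ref{theorem1} (with $2+K\gamma=3$), use the restriction $b \leq \sqrt{M}\,\bar L_2/L_2$ so that the $L_2 b$ branch of the step size is dominated by the $\bar L_2\sqrt{M}$ branch, and multiply the resulting epoch count by the $\cO(M)$ oracle cost per outer iteration. You also correctly note (as the paper's own proof implicitly does) that the constant in the final bound is $\bar L_2$ rather than $L$, despite the $L$ appearing in the corollary statement.
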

\begin{proof}
From Theorem \ref{theorem1} it follows
\begin{align}
    \mathbb{E}\left[\operatorname{Gap} (x_S)\right]
    \leq &  \frac{(2+K\gamma)}{\eta K S }\max\limits_{x\in \mathcal{C}}\left\{ V(x,x_0) \right\}\notag
    \\ \leq & \left(\frac{2+K\cdot \frac{1}{K}}{\frac{b}{2\bar L_2}\cdot\sqrt{\frac{3}{M}}\cdot \frac{M}{3b}\cdot S} + \frac{2+K\cdot \frac{1}{K}}{\frac{1}{8 L_2}\cdot \frac{M}{3b}\cdot S}\right)\max\limits_{x\in \mathcal{C}}\left\{ V(x,x_0) \right\}\notag
    \\\leq & \left(\frac{\bar L_2\cdot 6\sqrt{3}}{\sqrt{M}S} + \frac{L_2\cdot 72b}{NS}\right)\max\limits_{x\in \mathcal{C}}\left\{ V(x,x_0) \right\}\notag
    \\= & \cO\left( \frac{\bar L_2}{\sqrt{M}S} +  \frac{L_2\cdot b}{MS}\right)
    \notag.
    \end{align}
With $b \leq \frac{\sqrt{M}\bar L_2}{L_2}$ we have $\mathbb{E}\left[\operatorname{Gap} (x_S)\right] = \cO\left(\frac{\bar L_2}{\sqrt{M}S}\right)$.
One outer iteration requires $\cO(M)$ evaluations of $F_m$. Hence, the final complexity of $\cO (M + \frac{\bar L_2\sqrt{M}}{\epsilon})$.
\end{proof}

Now let us consider the case when Lipschitzness of the operator is given in the form of Assumption \ref{ass3b}.

\begin{theorem}\label{theorem2}
Consider the problem \eqref{eq:VI}+\eqref{eq:sum} under Assumptions~\ref{ass1}, \ref{ass2} and \ref{ass3b}. Let  $\{x_s^k\}$ be the sequence generated by Algorithm~\ref{alg:new} with tuning of $\eta, \theta, \alpha, \beta, \gamma$  as follows:
\begin{align}
    0 < \gamma = p \leq \frac{1}{16}, \notag 
\end{align} 
\begin{align}
    \quad \eta = \min\left\{\frac{\sqrt{\gamma b}}{8 L \sqrt{1 + C \ln n}}, \frac{1}{8L \sqrt{1 + C \ln n}}\right\}. \label{th2:eta}
\end{align} 
Then for $x_S = \frac{1}{KS} \sum\limits_{s=0}^{S-1}\sum\limits_{k=0}^{K-1} x_s^k$ it holds that
\begin{align}
    \mathbb{E}\left[\operatorname{Gap} (x_S)\right]
    \leq &  \frac{(2+K\gamma)}{\eta K S }\max\limits_{x\in \mathcal{C}}\left\{ V(x,x_0) \right\}\notag.
\end{align}
\end{theorem}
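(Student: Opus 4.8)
The plan is to run the proof of Theorem~\ref{theorem1} essentially line for line, replacing every Euclidean object by its Bregman counterpart. I would start again from Lemma~\ref{lem:1} with the substitution $u=\eta\Delta^k$, $x^\dag=x_s^{k+1}$, $x_1=\overline{\omega}_s$, $x_2=x_s^k$, and use the identity $\mathbb{E}_k[\Delta^k]=2F(x_s^k)-F(x_s^{k-1})$ supplied by Lemma~\ref{lem:2b}. The algebra that decomposes the right-hand side into a monotone term, the optimistic cross terms $\langle F(x_s^k)-F(x_s^{k-1}),\cdot\rangle$, and the stochastic cross terms $\langle\mathbb{E}_k[\Delta^k]-\Delta^k,\cdot\rangle$ is identical to the one in Theorem~\ref{theorem1}; Lemma~\ref{lem:4} together with Jensen's inequality and \eqref{main_inequality} handles the dual-averaging snapshot $\overline{\omega}_s$ in exactly the same way, and summing over $k$ and $s$ with lines~\ref{Alg1Line12}, \ref{Alg1Line14} and \ref{Alg1Line2} of Algorithm~\ref{alg:new} telescopes the boundary terms to reproduce the analogue of \eqref{th1:res3}.

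The first genuine change is in the Young's-inequality bounds for the optimistic cross terms. Whereas Theorem~\ref{theorem1} wrote $\|F(x_s^k)-F(x_s^{k-1})\|_2^2\le\bar L_2^2\|x_s^k-x_s^{k-1}\|_2^2$ and then passed between norms, here I would use the primal--dual Fenchel--Young pairing, $-\eta\langle F(x_s^k)-F(x_s^{k-1}),x_s^{k+1}-x_s^k\rangle\le 2\eta^2\|F(x_s^k)-F(x_s^{k-1})\|_*^2+\tfrac{1}{8}\|x_s^{k+1}-x_s^k\|^2$, and invoke Assumption~\ref{ass3b} directly as $\|F(x_s^k)-F(x_s^{k-1})\|_*\le L\|x_s^k-x_s^{k-1}\|$. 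Since \eqref{th2:eta} forces $\eta\le\tfrac{1}{8L}$, no norm conversion is needed and the same coefficients ($\tfrac{1}{32}$ and $\tfrac{1}{8}$) as in \eqref{th1:res1:Young} are obtained; the analogous boundary estimate replacing \eqref{th1:res2:Young} follows the same way.

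The decisive change comes after taking the maximum over $\mathcal{C}$ and the full expectation. The martingale property still yields $\mathbb{E}\big[\sum_{s,k}\langle\mathbb{E}_k[\Delta^k]-\Delta^k,x_s^k\rangle\big]=0$, and for the remaining $\max_{x\in\mathcal{C}}\sum_{s,k}\langle\mathbb{E}_k[\Delta^k]-\Delta^k,x\rangle$ I would apply Lemma~\ref{lem:3b} instead of Lemma~\ref{lem:3a}. This keeps the whole estimate consistently in the Bregman/dual-norm geometry: it produces $\max_{x\in\mathcal{C}}V(x,x_0)+\tfrac{1}{2}\sum_{s,k}\mathbb{E}\|\mathbb{E}_k[\Delta^k]-\Delta^k\|_*^2$, which directly supplies the $V(x,x_0)$ term of the final bound and, crucially, measures the variance in $\|\cdot\|_*$ so that Lemma~\ref{lem:2b} can be applied to it. Lemma~\ref{lem:2b} costs an extra factor $(1+C\ln n)$ relative to Lemma~\ref{lem:2a}.

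The main obstacle, and the only place the logarithmic factor matters, is the coefficient bookkeeping at the end. Collecting terms as in \eqref{th1:res6}, the error contributions now carry coefficients involving $\tfrac{(1+C\ln n)L^2\eta^2}{b}$ in place of $\tfrac{\bar L_2^2\eta^2}{b}$. The step choice \eqref{th2:eta}, $\eta\le\tfrac{\sqrt{\gamma b}}{8L\sqrt{1+C\ln n}}$, is calibrated precisely so that $\tfrac{(1+C\ln n)L^2\eta^2}{b}\le\tfrac{\gamma}{64}$, which is exactly the inequality used in Theorem~\ref{theorem1}. Hence the analogues of \eqref{th1:res6:est1} and \eqref{th1:res6:est2} still have non-positive coefficients on all the $V(x_s^{k+1},x_s^k)$, $V(x_s^k,x_s^{k-1})$ and $\|x_s^k-\omega_s\|^2$ terms, using $0<\gamma=p\le\tfrac{1}{16}$ and the reset lines of Algorithm~\ref{alg:new}, so that only $(2+\gamma K)\max_{x\in\mathcal{C}}V(x,x_0)$ survives. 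Finally, convexity of $g$ and of $x\mapsto\langle F(x),x-u\rangle$ together with Jensen's inequality on $x_S=\tfrac{1}{KS}\sum_{s,k}x_s^{k}$ convert the left-hand side into $\eta KS\,\mathbb{E}[\operatorname{Gap}(x_S)]$ exactly as in \eqref{th1:gap_ineq}, which yields the claimed estimate.
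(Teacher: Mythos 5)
Your proposal is correct and follows essentially the same route as the paper's own proof: it reuses the Theorem~\ref{theorem1} skeleton, replaces the Euclidean Young/Lipschitz bounds by their dual-norm counterparts via Assumption~\ref{ass3b}, invokes Lemma~\ref{lem:3b} and Lemma~\ref{lem:2b} for the martingale and variance terms, and calibrates $\eta$ so that $(1+C\ln n)L^2\eta^2/b\le\gamma/64$ makes the error coefficients non-positive. Nothing essential is missing.
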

\begin{proof}
The beginning of the proof is similar to the proof of Theorem \ref{theorem1}.
With given assumptions we replace (\ref{th1:res1:Young}) and (\ref{th1:res2:Young}) with

\begin{align}
    -\eta \langle F(x_s^k)- F(x_s^{k-1}), x_s^{k+1}-x_s^k \rangle \leq & 2\eta^2 \|F(x_s^k)- F(x_s^{k-1})\|_*^2 + \frac{1}{8} \| x_s^{k+1}-x_s^k \|^2 \notag 
    \\\leq & 2L^2\eta^2 \|x_s^k- x_s^{k-1}\|^2 + \frac{1}{8} \| x_s^{k+1}-x_s^k \|^2 \notag
    \\ \leq & \frac{1}{32} \|x_s^k- x_s^{k-1}\|^2 + \frac{1}{8} \| x_s^{k+1}-x_s^k \|^2 \notag
\end{align}
and
\begin{align}
     \eta \langle F(x^{0}_S)-F(x^{-1}_S), x^0_S-x\rangle \leq & \frac{\eta^2}{2}\|F(x_S^0)- F(x_S^{-1})\|_*^2 + \frac{1}{2}\|x_S^0 - x\|^2 \notag
     \\\leq & \frac{\eta^2 L^2}{2}\|x_S^{0}-x_S^{-1}\|^2+\frac{1}{2}\|x_S^0 - x\|^2\notag
     \\ \leq &\frac{1}{128}\|x_S^{0}-x_S^{-1}\|^2+\frac{1}{2}\|x_S^0- x\|^2.\notag
\end{align}
These inequalities are correct due to Young's inequality (\ref{ap:Young}), Assumption \ref{ass3b} and the choice of step (\ref{th2:eta}).

In a similar way to (\ref{th1:res4}) from Theorem \ref{theorem1} we get the result 

\begin{align}
\mathbb{E}\Bigg[ \max\limits_{x\in \mathcal{C}} \Bigg\{\sum_{s=0}^{S-1}\sum_{k=0}^{K-1}&\Big[\eta (g(x^{k+1}_s)-g(x)) + \eta \langle F(x^{k+1}_s), x^{k+1}_s-x \rangle \Big] \Bigg\}\Bigg]\notag
    \\ 
    \leq &
    \frac{1}{128}\mathbb{E}\Big[\|x_{S-1}^{K}-x_{S-1}^{K-1}\|^2\Big] + \frac{1}{32} \mathbb{E}\Bigg[ \sum_{s=0}^{S-1} \sum_{k=0}^{K-1}\|x_s^k- x_s^{k-1}\|^2 \Bigg]\notag
\\
&+ \left(\gamma-\frac{3}{4}\right) \mathbb{E}\Bigg[ \sum_{s=0}^{S-1}\sum_{k=0}^{K-1} V(x_s^{k+1},x_s^k) \Bigg]\notag
\\
& +  \eta \mathbb{E}\Bigg[ \sum_{s=0}^{S-1}\sum_{k=0}^{K-1}\Big[ \langle\mathbb{E}_k\left[\Delta^k\right]-\Delta^k, x^{k+1}_s-x^k_s\rangle\big] \Bigg]\notag
\\
&+ \eta \mathbb{E}\Bigg[ \max\limits_{x\in \mathcal{C}} \left\{ \sum_{s=0}^{S-1} \sum_{k=0}^{K-1}\Big[\langle\mathbb{E}_k\left[\Delta^k\right]-\Delta^k, x\rangle\Big] \right\} \Bigg] \notag
\\& - \frac{ \gamma}{2}\mathbb{E}\Bigg[ \sum_{s=0}^{S-1}\sum_{k=0}^{K-1}\|x_s^{k+1}-\omega_{s}\|^2 \Bigg]+(1 + \gamma K)\max\limits_{x\in \mathcal{C}} V(x,x_0^0). \label{th2:res4}
\end{align}
Similar to (\ref{th1:res4:est}), we get \begin{align}
    \eta \mathbb{E}\Bigg[ \sum_{s=0}^{S-1}\sum_{k=0}^{K-1}&\Big[ \langle\mathbb{E}_k\left[\Delta^k\right]-\Delta^k, x^{k+1}_s-x^k_s\rangle\big] \Bigg]\notag
    \\
    \leq&2\eta^2  \sum_{s=0}^{S-1}\sum_{k=0}^{K-1} \mathbb{E} \|\mathbb{E}_k\left[\Delta^k\right]-\Delta^k \|^2_*
+\frac{1}{8}\sum_{s=0}^{S-1}\sum_{k=0}^{K-1} \Bigg[ \mathbb{E}\left[\|x_s^{k+1} - x_s^k\|^2\right] \Bigg].
\label{th2:res4:est}
\end{align}

$\mathbb{E}_k\left[\Delta^k\right]-\Delta^k$ is a stochastic process with $\mathbb{E}[\mathbb{E}_k\left[\Delta^k\right]-\Delta^k]=0$. Therefore, according to Lemma \ref{lem:3b}
\begin{align}
    \eta \mathbb{E}\Bigg[\max\limits_{x\in \mathcal{C}} \Bigg\{ \sum_{s=0}^{S-1}\sum_{k=0}^{K-1}& \langle\mathbb{E}_k\left[\Delta^k\right]-\Delta^k, x\rangle \Bigg\} \Bigg] \notag
    \\\leq &
    V(x, x^0_0)
+ \frac{\eta^2}{2}  \sum_{s=0}^{S-1}\sum_{k=0}^{K-1} \mathbb{E} \|\mathbb{E}_k\left[\Delta^k\right]-\Delta^k \|^2_*.\label{th2:res4:lem3}
\end{align}
Applying (\ref{th2:res4:est}) and (\ref{th2:res4:lem3}) to (\ref{th2:res4}), we get
\begin{align}
\mathbb{E}\Bigg[ \max\limits_{x\in \mathcal{C}} \Bigg\{\sum_{s=0}^{S-1}&\sum_{k=0}^{K-1}\Big[\eta (g(x^{k+1}_s)-g(x)) + \eta \langle F(x^{k+1}_s), x^{k+1}_s-x \rangle \Big] \Bigg\}\Bigg]\notag
    \\
    \leq&
    \frac{1}{128}\mathbb{E}\Big[\|x_{S-1}^{K}-x_{S-1}^{K-1}\|^2\Big]+\frac{1}{32} \mathbb{E}\Bigg[ \sum_{s=0}^{S-1} \sum_{k=0}^{K-1}\|x_s^k- x_s^{k-1}\|^2 \Bigg]\notag
\\
&+ \left(\gamma-\frac{3}{4}\right) \mathbb{E}\Bigg[ \sum_{s=0}^{S-1}\sum_{k=0}^{K-1} V(x_s^{k+1},x_s^k) \Bigg]\notag
\\
&+2\eta^2  \sum_{s=0}^{S-1}\sum_{k=0}^{K-1} \mathbb{E} \|\mathbb{E}_k\left[\Delta^k\right]-\Delta^k \|^2_*
+\frac{1}{8}\sum_{s=0}^{S-1}\sum_{k=0}^{K-1} \Bigg[ \mathbb{E}\left[\|x_s^{k+1} - x_s^k\|^2\right] \Bigg]\notag
\\
&+V(x, x^0_0)
+ \frac{\eta^2}{2}  \sum_{s=0}^{S-1}\sum_{k=0}^{K-1} \mathbb{E} \|\mathbb{E}_k\left[\Delta^k\right]-\Delta^k \|^2_*\notag
\\
& - \frac{ \gamma}{2}\mathbb{E}\Bigg[ \sum_{s=0}^{S-1}\sum_{k=0}^{K-1}\|x_s^{k+1}-\omega_{s}\|^2 \Bigg]+(1 + \gamma K)\max\limits_{x\in \mathcal{C}} V(x,x_0^0) \notag
    \\
    \leq&
    \frac{1}{128}\mathbb{E}\Big[\|x_{S-1}^{K}-x_{S-1}^{K-1}\|^2\Big] +\frac{1}{32} \mathbb{E}\Bigg[ \sum_{s=0}^{S-1} \sum_{k=0}^{K-1}\|x_s^k- x_s^{k-1}\|^2 \Bigg]\notag
\\
&+ \left(\gamma-\frac{3}{4}\right) \mathbb{E}\Bigg[ \sum_{s=0}^{S-1}\sum_{k=0}^{K-1} V(x_s^{k+1},x_s^k) \Bigg]\notag
\\&+\frac{5\eta^2}{2}  \sum_{s=0}^{S-1}\sum_{k=0}^{K-1} \mathbb{E} \|\mathbb{E}_k\left[\Delta^k\right]-\Delta^k \|^2_*\notag
\\&
+\frac{1}{8}\sum_{s=0}^{S-1}\sum_{k=0}^{K-1} \Bigg[ \mathbb{E}\left[\|x_s^{k+1} - x_s^k\|^2\right] \Bigg]+ \frac{1}{2} \max\limits_{x\in \mathcal{C}} \|x - x^0_0\|^2 \notag
\\&
- \frac{ \gamma}{2}\mathbb{E}\Bigg[ \sum_{s=0}^{S-1}\sum_{k=0}^{K-1}\|x_s^{k+1}-\omega_{s}\|^2 \Bigg]+(1 + \gamma K)\max\limits_{x\in \mathcal{C}} V(x,x_0^0). \label{th2:before_CSineq}
\end{align}

Applying Lemma \ref{lem:2b}, the fact (\ref{main_inequality}), property $\|\cdot \|_2\leq \|\cdot \|$ and (\ref{th1:CSineq}) to (\ref{th2:before_CSineq}), we achieve
\begin{align}
\mathbb{E}\Bigg[ \max\limits_{x\in \mathcal{C}}& \Bigg\{\sum_{s=0}^{S-1}\sum_{k=0}^{K-1}\Big[\eta (g(x^{k+1}_s)-g(x)) + \eta \langle F(x^{k+1}_s), x^{k+1}_s-x \rangle \Big] \Bigg\}\Bigg]\notag
    \\
    \leq &
    \frac{1}{128}\mathbb{E}\Bigg[\|x_{S-1}^{K}-x_{S-1}^{K-1}\|^2\Big] + \frac{1}{32} \mathbb{E}\Bigg[ \sum_{s=0}^{S-1} \sum_{k=0}^{K-1}\|x_s^k- x_s^{k-1}\|^2 \Bigg]\notag
\\
&+ \left(\gamma-\frac{3}{4}\right) \mathbb{E}\Bigg[ \sum_{s=0}^{S-1}\sum_{k=0}^{K-1} V(x_s^{k+1},x_s^k) \Bigg]\notag
\\
&+\frac{5\eta^2}{2}   \sum_{s=0}^{S-1}\sum_{k=0}^{K-1} \Bigg[{\frac{2(1+C\ln n){L}^{2}}{b}}\mathbb{E}\left[\|x^{k}_s-w_{s}\|^2+\|x^{k}_s - x^{k-1}_s\|^2 \right]\Bigg]\notag
\\
&+\frac{1}{8}\sum_{s=0}^{S-1}\sum_{k=0}^{K-1} \Bigg[ \mathbb{E}\left[\|x_s^{k+1} - x_s^k\|^2_2\right] \Bigg]+ \frac{1}{2} \max\limits_{x\in \mathcal{C}} \|x - x^0_0\|^2_2  \notag
\\
& - \frac{ \gamma}{2}\mathbb{E}\Bigg[ \sum_{s=0}^{S-1}\sum_{k=0}^{K-1}\|x_s^{k+1}-\omega_{s}\|^2 \Bigg]+(1 + \gamma K)\max\limits_{x\in \mathcal{C}} V(x,x_0^0)\notag
    \\
    \leq &
    \frac{1}{64}\mathbb{E} \left[ V(x_{S-1}^{K},x_{S-1}^{K-1})\right] + \frac{1}{16} \mathbb{E}\Bigg[ \sum_{s=0}^{S-1} \sum_{k=0}^{K-1}V(x_s^k, x_s^{k-1}) \Bigg]\notag
\\
&+ \left(\gamma-\frac{3}{4}\right) \mathbb{E}\Bigg[ \sum_{s=0}^{S-1}\sum_{k=0}^{K-1} V(x_s^{k+1},x_s^k) \Bigg] +\frac{1}{4}\sum_{s=0}^{S-1}\sum_{k=0}^{K-1} \Bigg[ \mathbb{E}\left[V(x_s^{k+1}, x_s^k)\right] \Bigg]\notag
\\
&+\frac{5\eta^2}{2}   \sum_{s=0}^{S-1}\sum_{k=0}^{K-1} \Bigg[{\frac{{L}^{2}}{b}}\mathbb{E}\Bigg[2(1+C\ln n)\|x^{k}_s-w_{s}\|^2\notag
\\&+\frac{2(1+C\ln n)}{2}V\left(x^{k}_s,x^{k-1}_s\right)\Bigg]\Bigg]+ \max\limits_{x\in \mathcal{C}} V(x,x_0^0)\notag
\\
&- \frac{ \gamma}{4}\mathbb{E}\Bigg[ \sum_{s=0}^{S-1}\sum_{k=0}^{K-1}\|x_s^k-\omega_{s}\|^2 \Bigg]+ \frac{ \gamma}{2} \mathbb{E}\Bigg[ \sum_{s=0}^{S-1}\sum_{k=0}^{K-1}\|x_s^{k+1}-x_s^k\|^2 \Bigg]\notag\notag
\\&+(1 + \gamma K)\max\limits_{x\in \mathcal{C}} V(x,x_0^0)\notag
    \\
    \leq&
    \frac{1}{64}\mathbb{E} \left[ V(x_{S-1}^{K},x_{S-1}^{K-1})\right]\notag
\\& +\left(\frac{1}{16}    + \frac{{10(1+C\ln n){L}^2\eta^2}}{4b}\right) \mathbb{E}\Bigg[ \sum_{s=0}^{S-1} \sum_{k=0}^{K-1}V\left(x_s^k, x_s^{k-1}\right)^2 \Bigg]\notag
\\
&+ \left(\gamma-\frac{1}{2}\right) \mathbb{E}\Bigg[ \sum_{s=0}^{S-1}\sum_{k=0}^{K-1} V(x_s^{k+1},x_s^k) \Bigg]\notag
\\
&+\frac{10(1+C\ln n){L^2\eta^2}}{2b}  \sum_{s=0}^{S-1}\sum_{k=0}^{K-1} \Bigg[\mathbb{E}\|x_s^k- w_s\|^2\Bigg]+\max\limits_{x\in \mathcal{C}} V(x,x_0^0)\notag
\\
&- \frac{ \gamma}{4}\mathbb{E}\Bigg[ \sum_{s=0}^{S-1}\sum_{k=0}^{K-1}\|x_s^k-\omega_{s}\|^2 \Bigg] + \gamma \mathbb{E}\Bigg[ \sum_{s=0}^{S-1}\sum_{k=0}^{K-1}V(x_s^{k+1},x_s^k) \Bigg]\notag
\\
&+(1 + \gamma K)\max\limits_{x\in \mathcal{C}} V(x,x_0^0). \notag
\end{align}
The following steps are similar up to (\ref{th1:res6}).
We use the fact (\ref{main_inequality}), 
inequality (\ref{th1:gamma}), lines \ref{Alg1Line12} and \ref{Alg1Line14} of Algorithm \ref{alg:new}, the choice of the step (\ref{th1:eta}) and similar calculations to (\ref{th1:res6:est1}) and (\ref{th1:res6:est2})  to get
\begin{align}
\frac{1}{64}\mathbb{E} \Big[ V( & x_{S-1}^{K},x_{S-1}^{K-1})\Big]+\left(\frac{1}{16}+ \frac{{10(1+C\ln n)L^2\eta^2}}{4b}\right) \mathbb{E}\Bigg[ \sum_{s=0}^{S-1} \sum_{k=0}^{K-1}V\left(x_s^k, x_s^{k-1}\right) \Bigg]\notag
\\&+ \left(\gamma-\frac{1}{2}\right) \mathbb{E}\Bigg[ \sum_{s=0}^{S-1}\sum_{k=0}^{K-1} V(x_s^{k+1},x_s^k) \Bigg]+\gamma \mathbb{E}\Bigg[ \sum_{s=0}^{S-1}\sum_{k=0}^{K-1}V(x_s^{k+1},x_s^k) \Bigg]\notag
    \\ \leq & \left(\frac{1}{16}+ \frac{{10(1+C\ln n)L^2\eta^2}}{4b} + 2\gamma-\frac{1}{2} \right)\mathbb{E}\Bigg[ \sum_{s=0}^{S-1} \sum_{k=1}^{K-1}V\left(x_s^k, x_s^{k-1}\right) \Bigg]
      \notag
\\ & + \left(\frac{1}{16}+ \frac{{10(1+C\ln n)L^2\eta^2}}{4b} + 2\gamma-\frac{1}{2}\right) \mathbb{E}\Bigg[ \sum_{s=1}^{S} V\left(x_s^0, x_s^{-1}\right) \Bigg] \notag
\\ & + \left(\frac{1}{16}+ \frac{{10(1+C\ln n)L^2\eta^2}}{4b}\right) \mathbb{E}\Bigg[  V\left(x_0^0, x_0^{-1}\right) \Bigg]\notag
    \\ \leq & \left(\frac{1}{16}+ \frac{10}{4}\cdot \frac{1}{64}\cdot \frac{1}{16} + \frac{2}{16} -\frac{1}{2} \right)\mathbb{E}\Bigg[ \sum_{s=0}^{S-1} \sum_{k=1}^{K-1}V\left(x_s^k, x_s^{k-1}\right) \Bigg]
      \notag
\\ & + \left(\frac{1}{16}+ \frac{10}{4}\cdot \frac{1}{64}\cdot \frac{1}{16} + \frac{2}{16} -\frac{1}{2} \right) \mathbb{E}\Bigg[ \sum_{s=1}^{S} V\left(x_s^0, x_s^{-1}\right) \Bigg] \notag
\\ & + \left(\frac{1}{16}+ \frac{{5(1+C\ln n)L^2\eta^2}}{2b}\right) \mathbb{E}\Bigg[  V\left(x_0^0, x_0^{-1}\right) \Bigg] \leq 0 
,\notag 
\end{align}
and 
\begin{align}
    \frac{{10(1+C\ln n)L^2\eta^2}}{2b} & \sum_{s=0}^{S-1}\sum_{k=0}^{K-1} \Bigg[\mathbb{E}\|x_s^k- w_s\|^2\Bigg] - \frac{ \gamma}{4}\mathbb{E}\Bigg[ \sum_{s=0}^{S-1}\sum_{k=0}^{K-1}\|x_s^k-\omega_{s}\|^2 \Bigg]\notag
    \\\leq & \left( \frac{{10(1+C\ln n)L^2\eta^2}}{2b} - \frac{\gamma}{4} \right) \mathbb{E}\Bigg[ \sum_{s=0}^{S-1}\sum_{k=0}^{K-1}\|x_s^{k+1}-\omega_{s}\|^2 \Bigg]\notag
    \\\leq & \left( \frac{5\gamma}{64} - \frac{\gamma}{4} \right) \mathbb{E}\Bigg[ \sum_{s=0}^{S-1}\sum_{k=0}^{K-1}\|x_s^{k+1}-\omega_{s}\|^2 \Bigg]
    \leq 0. \notag 
\end{align}
Steps similar to the proof of Theorem \ref{theorem1} finish the proof.
\end{proof}

Now we proceed similarly to Corollary \ref{cor1}.
\begin{corollary}\label{cor2}
    Let $K = \frac{M}{3b}$, $\gamma = \frac{1}{K} = \frac{3b}{M}$ and $\eta = \min \left\{\frac{\sqrt{\gamma b}}{2\sqrt{1 + C \ln n} L}, \frac{1}{8L\sqrt{1 + C \ln n}}\right\} =\min \left\{ \frac{b}{2\sqrt{1 + C \ln n}L}\cdot\sqrt{\frac{3}{M}}, \frac{1}{8L\sqrt{1 + C \ln n}}\right\}$ and $b \leq \sqrt{M}$. Then the total complexity of the Algorithm \ref{alg:new} to reach $\epsilon$-accuracy is $\cO (M + \frac{L\sqrt{M}}{\epsilon})$.
\end{corollary}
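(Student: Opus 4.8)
The plan is to repeat the arithmetic of Corollary~\ref{cor1}, but starting from the convergence estimate of Theorem~\ref{theorem2} instead of Theorem~\ref{theorem1}. First I would set $\gamma = \frac{1}{K}$, which collapses the numerator $(2+K\gamma)$ to the constant $3 = \cO(1)$, so that the guarantee of Theorem~\ref{theorem2} reads
\begin{align*}
\mathbb{E}\left[\operatorname{Gap}(x_S)\right] \leq \frac{\cO(1)}{\eta K S}\max_{x\in\mathcal{C}}\left\{V(x,x_0)\right\}.
\end{align*}
Substituting $K = \frac{M}{3b}$ and $\gamma = \frac{3b}{M}$ turns the denominator into $\eta\cdot\frac{M}{3b}\cdot S$, and it only remains to expand the stepsize $\eta$.

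Since $\eta$ is a minimum of two expressions, I would split the bound into the two corresponding terms, exactly as in Corollary~\ref{cor1}. Inserting $\sqrt{\gamma b} = \frac{b\sqrt{3}}{\sqrt{M}}$, the branch $\frac{\sqrt{\gamma b}}{2\sqrt{1+C\ln n}\,L}$ contributes a term of order $\frac{L\sqrt{1+C\ln n}}{\sqrt{M}\,S}$, while the branch $\frac{1}{8L\sqrt{1+C\ln n}}$ contributes a term of order $\frac{L\sqrt{1+C\ln n}\,b}{M\,S}$. The only point needing care is balancing these two contributions: invoking the hypothesis $b\leq\sqrt{M}$ gives $\frac{b}{M}\leq\frac{1}{\sqrt{M}}$, so the second term is dominated by the first and
\begin{align*}
\mathbb{E}\left[\operatorname{Gap}(x_S)\right] = \cO\left(\frac{L\sqrt{1+C\ln n}}{\sqrt{M}\,S}\right),
\end{align*}
the logarithmic factor $\sqrt{1+C\ln n}$ being absorbed into $\cO(\cdot)$, which is why Table~\ref{tab:comparison} records this rate as $\tilde{\cO}$.

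Finally I would convert this per-$S$ rate into a total oracle complexity. Demanding the right-hand side be at most $\epsilon$ forces $S = \cO\left(\frac{L\sqrt{1+C\ln n}}{\sqrt{M}\,\epsilon}\right)$ outer iterations. Each outer iteration costs $\cO(M)$ operator evaluations: the full term $F(w_s)$ appearing in line~\ref{Alg1Line7} is computed once per outer iteration at a cost of $M$ calls, while the inner loop runs $K = \frac{M}{3b}$ times, each spending $3b$ stochastic calls on the batch, contributing another $\cO(M)$ calls. Multiplying $S$ by this $\cO(M)$ per-iteration cost, and adding the one unavoidable full pass (the source of the additive $M$ when $\epsilon$ is large), yields the claimed total $\cO\left(M + \frac{L\sqrt{M}}{\epsilon}\right)$. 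I expect no genuine obstacle here, since Theorem~\ref{theorem2} already absorbs all the analytical difficulty; the only delicate steps are the two-branch split of $\eta$ and the use of $b\leq\sqrt{M}$ to keep the second branch subordinate.
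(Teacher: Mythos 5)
Your proposal is correct and follows essentially the same route as the paper: plug $\gamma=1/K$, $K=M/(3b)$ into Theorem~\ref{theorem2}, split the stepsize minimum into its two branches, use $b\leq\sqrt{M}$ to make the $\frac{Lb}{MS}$ term subordinate to the $\frac{L}{\sqrt{M}S}$ term, and multiply the resulting number of epochs by the $\cO(M)$ oracle cost per epoch. Your accounting of the per-epoch cost and the absorption of the $\sqrt{1+C\ln n}$ factor into $\tilde{\cO}$ is in fact slightly more explicit than the paper's own wrap-up, but it is the same argument.
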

\begin{proof}
From Theorem \ref{theorem2} it follows
\begin{align}
    \mathbb{E}\left[\operatorname{Gap} (x_S)\right]
    \leq &  \frac{(2+K\gamma)}{\eta K S }\max\limits_{x\in \mathcal{C}}\left\{ V(x,x_0) \right\}\notag
    \\ \leq & \left(\frac{2+K\cdot \frac{1}{K}}{\frac{b}{2 L}\cdot\sqrt{\frac{3}{M}}\cdot \frac{M}{3b}\cdot S} + \frac{2+K\cdot \frac{1}{K}}{\frac{1}{8 L}\cdot\frac{M}{3b}\cdot S}\right)\sqrt{1 + C \ln n}\max\limits_{x\in \mathcal{C}}\left\{ V(x,x_0) \right\}\notag
    \\\leq & \left(\frac{ L\cdot 6\sqrt{3}}{\sqrt{M}S} + \frac{ L\cdot 72b}{MS}\right)\sqrt{1 + C \ln n}\cdot \max\limits_{x\in \mathcal{C}}\left\{ V(x,x_0) \right\}\notag
    \\= & \mathcal{\tilde O}\left( \frac{ L}{\sqrt{M}S} + \frac{L\cdot b}{MS} \right)
    \notag.
    \end{align}
With $b \leq \sqrt{M}$ we have $\mathbb{E}\left[\operatorname{Gap} (x_S)\right] = \mathcal{\tilde O}\left(\frac{L}{\sqrt{M}S}\right)$.
On epoch requires $\cO(N)$ evaluations $F_m$. Hence, the final complexity of $\mathcal{\tilde O} (M + \frac{ L\sqrt{M}}{\epsilon})$.
\end{proof}

As mentioned before, $L\leq \bar L_2$. Thus, we can state $\frac{L}{\sqrt{M}S}\leq  \frac{\bar L_2}{\sqrt{M}S}$.

\section{Experiments}
In this section, empirical performance of Algorithm \ref{alg:new} is shown. Similar to the previous works \cite{alacaoglu2021stochastic,pichugin2024optimal}, we consider matrix games  
 \begin{align}
    \min\limits_{x\in \mathcal{X}} \max\limits_{y\in \mathcal{Y}}  \left\langle Ax, y \right\rangle
    \label{bilinear} 
\end{align}
with matrix $A\in \mathbb{R}^{n\times n}$ and use simplex constraints (\ref{simplex}) in the entropic setup:
\begin{align*}
    \mathcal{X} = \Delta^n,  \quad  \mathcal{Y} = \Delta^n.
\end{align*}
For our experiments we choose the policeman and burglar matrix \cite{nemirovski2013mini} and the first test matrix from \cite{nemirovski2009robust}.

Note that the problem \eqref{bilinear} does not have the finite-sum form as \eqref{eq:VI}+\eqref{eq:sum}, but it can be rewritten as 
\begin{align}
A = \sum_{i=1}^n A_{i:} \text{ or } A = \sum_{j=1}^n A_{:j}. 
\end{align}
where $A_{i:}$ is the $i$-th row of $A$ and $A_{:j}$ is the $j$-th column of $A$ -- see details in Section 5.1.2 from \cite{alacaoglu2021stochastic}. In this formulation the problem already has the form of a finite sum.

In order to 
calculate the update (in particular, we need to compute $\nabla h$ in the closed form) we introduce  $u = (u^x, u^y)$ and $v = (v^x, v^y)$.
With this notation we can adapt (\ref{entropy}) in a similar to the previous works \cite{alacaoglu2021forward,pichugin2024optimal,carmon2019variance} way:
\begin{align*}
    \nabla h \left(x^{k+1}_s\right) =& (1 - \gamma)\nabla h\left(x^k_s\right) + \gamma \nabla h \left(\overline{u}^s\right) 
\\&
- \eta\left( \frac{1}{b}\sum\limits_{i\in B^k}\left[ \frac{1}{r_i}A_{i:}\left(2y^k_{s, i} - v_{s, i} - y^{k-1}_{s, i}\right)\right] + A^\top v_s  \right)
    \\=& (1 - \gamma)\nabla h\left(x^k_s \right) + \gamma \nabla h \left(\overline{u}^s\right)
-\eta A^\top v_s
\\&- \frac{\eta}{b}\sum\limits_{i\in B^k}\left[ \frac{1}{r_i}A_{i:}\left\| 2y^k_{s} - v_{s} - y^{k-1}_{s} \right\| \cdot \operatorname{sign} \left(2y^k_{s, i} - v_{s, i} - y^{k-1}_{s, i}\right)\right].
\end{align*}

Likewise, for the second component one can write
\begin{align*}
    \nabla h \left(y^{k+1}_s\right)=& (1 - \gamma)\nabla h \left(y^k_s\right) + \gamma \nabla h \left(\overline{v}^s\right)
    +\eta A u_s
\\&+ \frac{\eta}{b}\sum\limits_{j\in B^k}\left[ \frac{1}{c_j}A_{:j}\left\| 2x^k_{s} - u_{s} - x^{k-1}_{s} \right\|\cdot \operatorname{sign} \left(2x^k_{s, j} - u_{s, j} - x^{k-1}_{s, j}\right)\right].
\end{align*}

Now we are ready to implement the Algorithm \ref{alg:new} and compare our method with other methods. In particular, we choose Algorithm 2 from \cite{alacaoglu2021stochastic}, Algorithm 1+2 from \cite{carmon2019variance} --- state-of-the-art competitors (see Table 1). The parameters of the algorithms are taken from the provided papers.

We run all methods with various batch sizes and use theoretical parameters (see Theorems \ref{theorem1} and \ref{theorem2} for our method and Section 6 from \cite{alacaoglu2021stochastic} for competitors). Duality gap \eqref{gap} is used as the convergence measure, in the matrix games setup it can be simply computed as $\left[\max_i (A^\top x)_i - \min_j (Ay)_j\right]$ for simplex constraints. The comparison criterion is the number of oracle calls (one call is computationally equal to calculations of $A_{i:}y_i$ and $A_{j:}^\top x_j$). 
The results are reflected in Figures \ref{fig:nem} and \ref{fig:PB}. It is clearly shown that Algorithm \ref{alg:new} outperforms competitors and its convergence rate is insensitive to the size of batch (unlike the other methods).

\begin{figure}
\centering
\begin{minipage}[][][b]{\textwidth}
\centering
\includegraphics[width=0.35\textwidth]{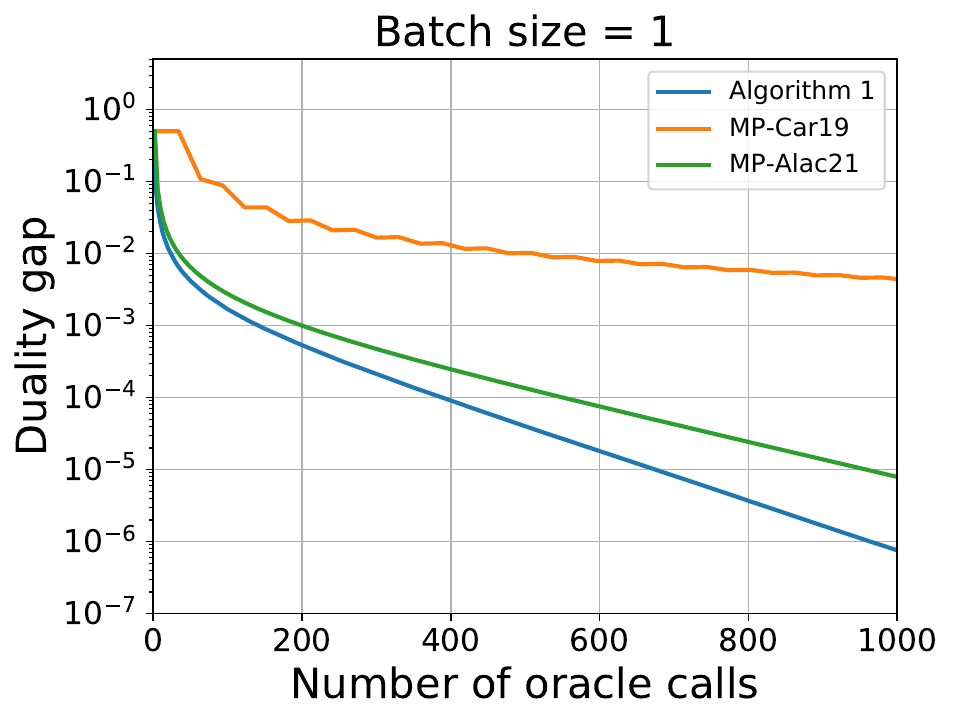}
\includegraphics[width=0.35\textwidth]{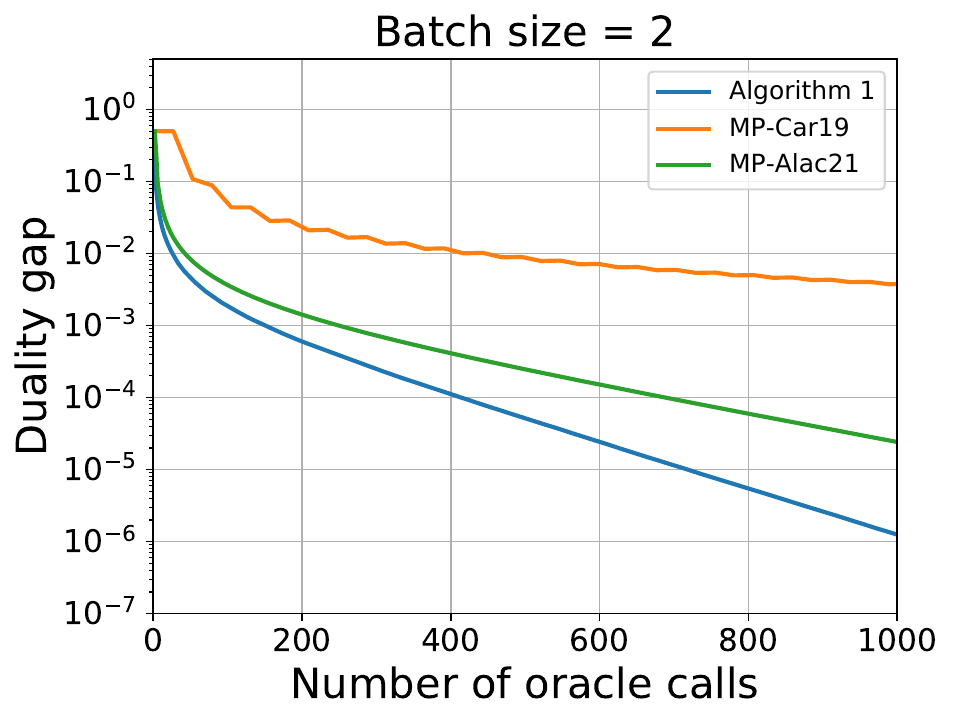}
\includegraphics[width=0.35\textwidth]{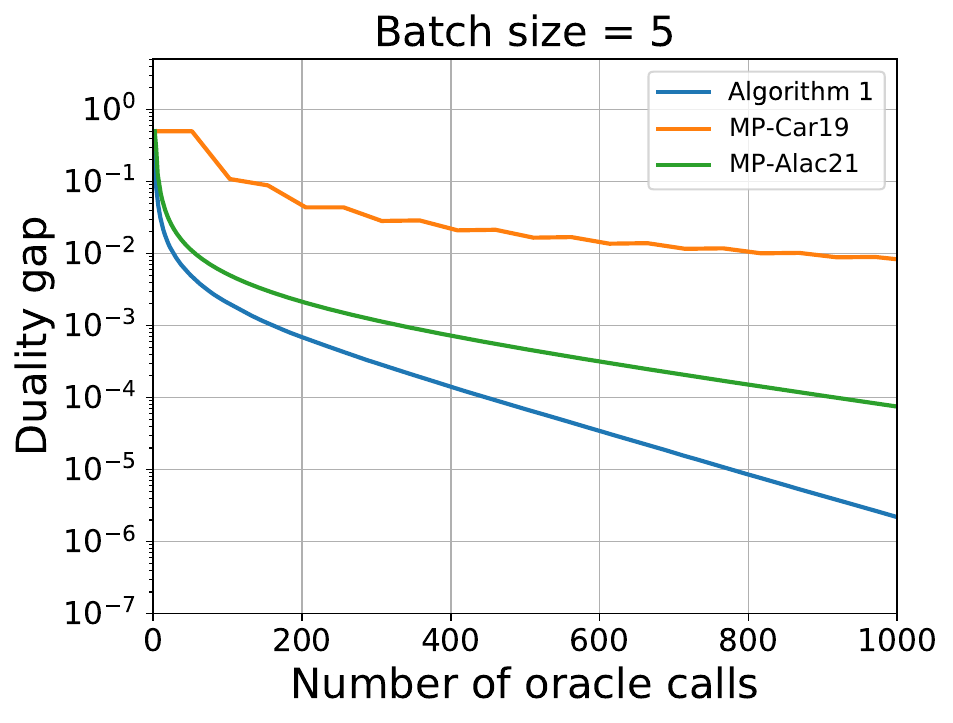}
\includegraphics[width=0.35\textwidth]{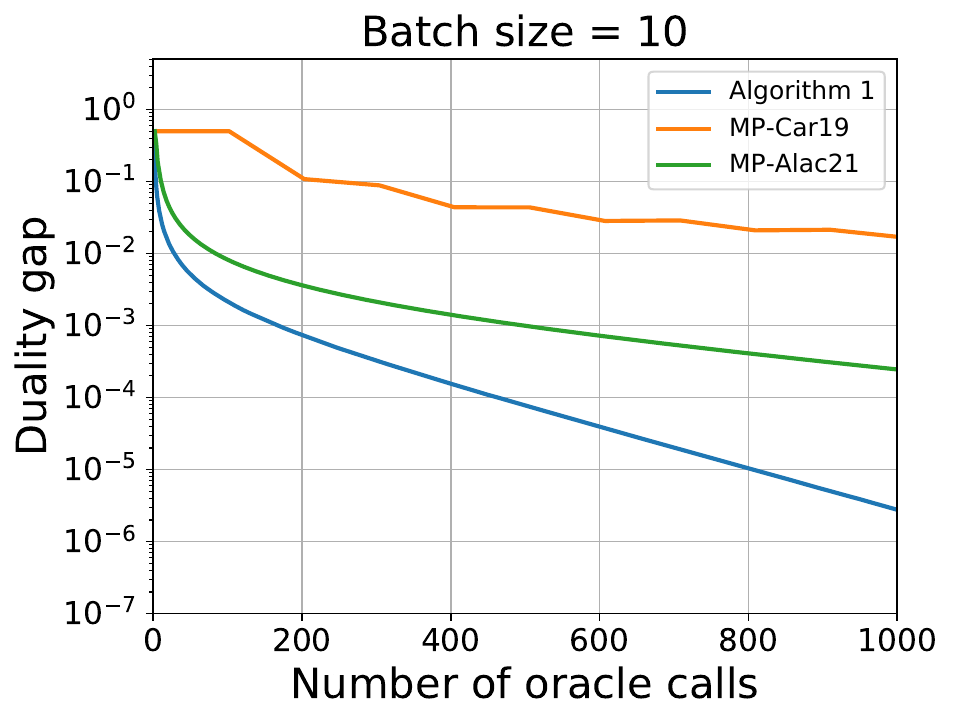}
\end{minipage}
\caption{Comparison of computational complexities for Algorithm~\ref{alg:new}, \texttt{MP-Car19} (Algorithm 1+2 from \cite{carmon2019variance}), and \texttt{MP-Alac21} (Algorithm 2 \cite{alacaoglu2021stochastic}) with different batch sizes on test matrix from \cite{nemirovski2013mini}.}
\label{fig:nem}
\end{figure}

\begin{figure}
\centering
\begin{minipage}[][][b]{\textwidth}
\centering
\includegraphics[width=0.35\textwidth]{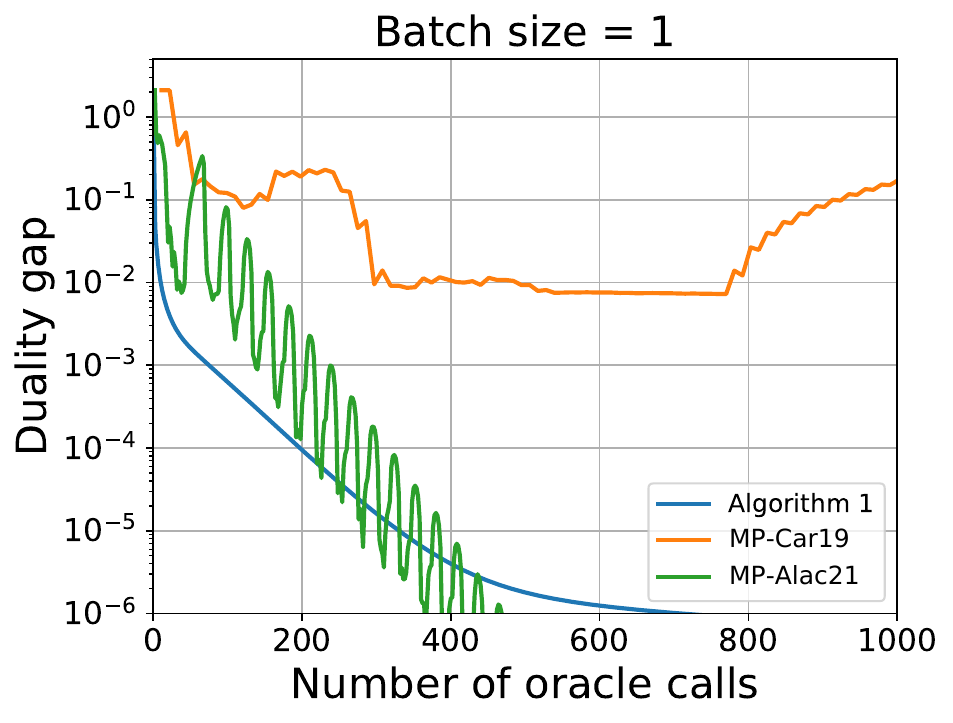}
\includegraphics[width=0.35\textwidth]{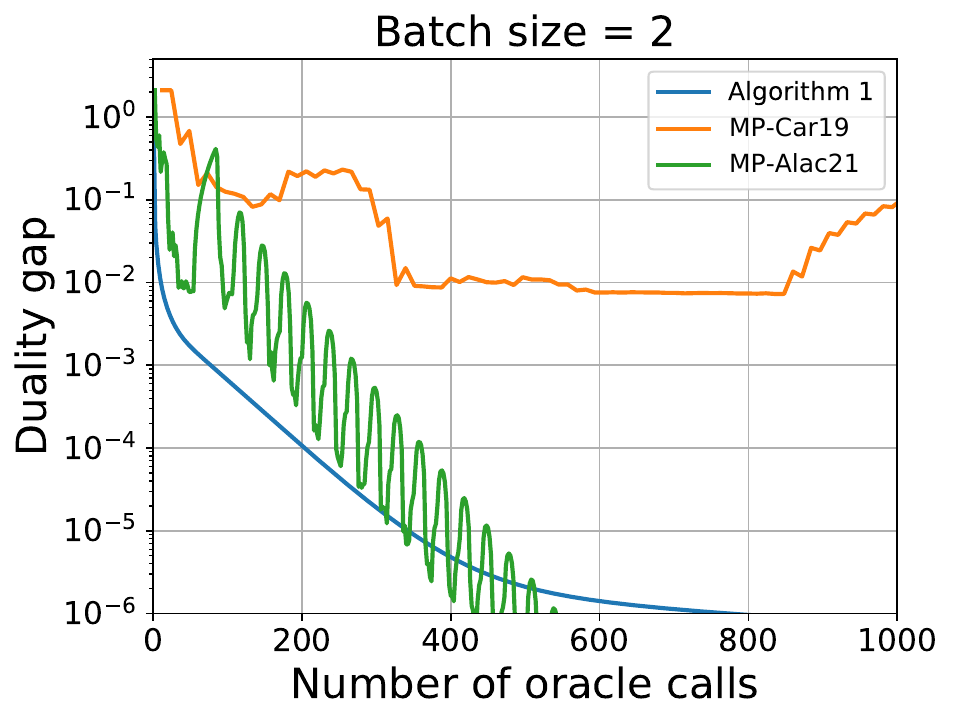}
\includegraphics[width=0.35\textwidth]{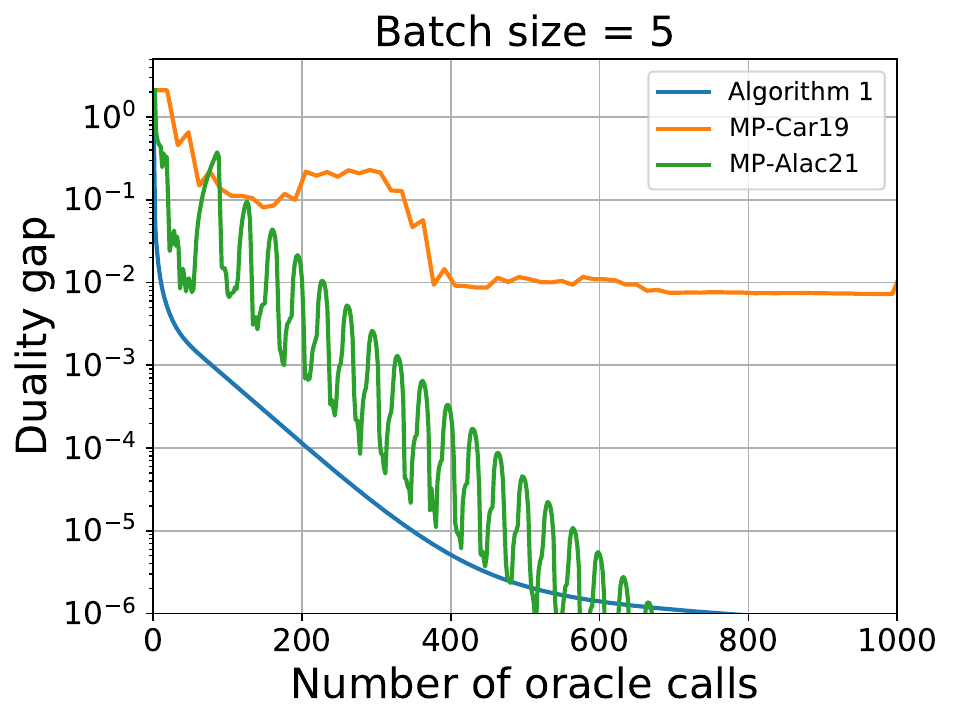}
\includegraphics[width=0.35\textwidth]{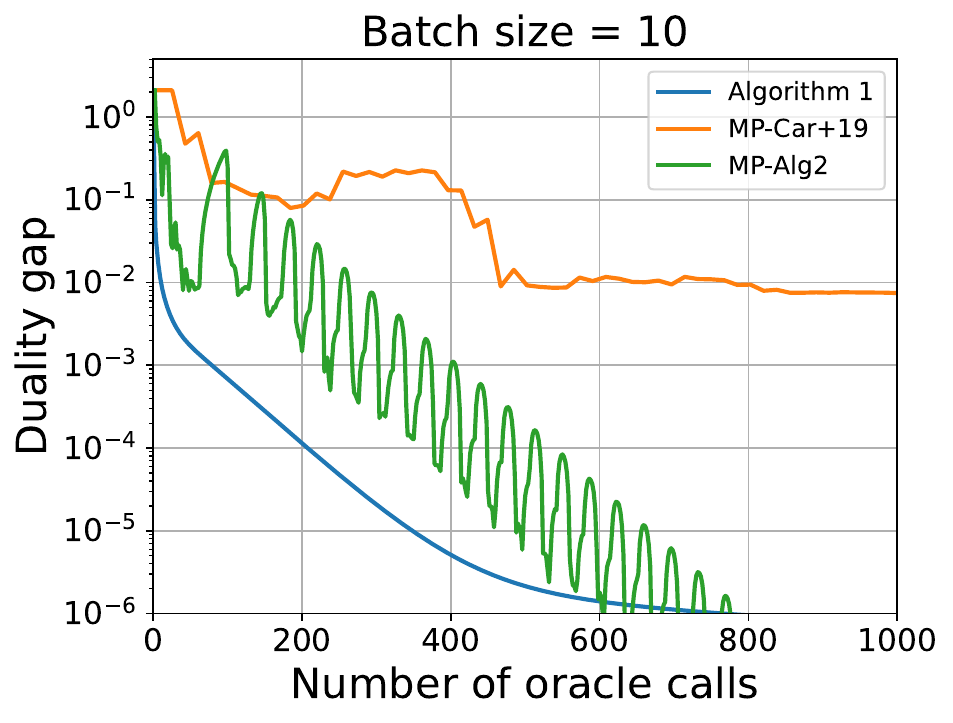}
\end{minipage}
\caption{Comparison of computational complexities for Algorithm~\ref{alg:new}, \texttt{MP-Car19} (Algorithm 1+2 from \cite{carmon2019variance}), and \texttt{MP-Alac21} (Algorithm 2 \cite{alacaoglu2021stochastic}) with different batch sizes on policeman and burglar matrix from \cite{nemirovski2013mini}.}
\label{fig:PB}
\end{figure}
\begin{figure}
\centering
\begin{minipage}[][][b]{\textwidth}
\centering
\includegraphics[width=0.35\textwidth]{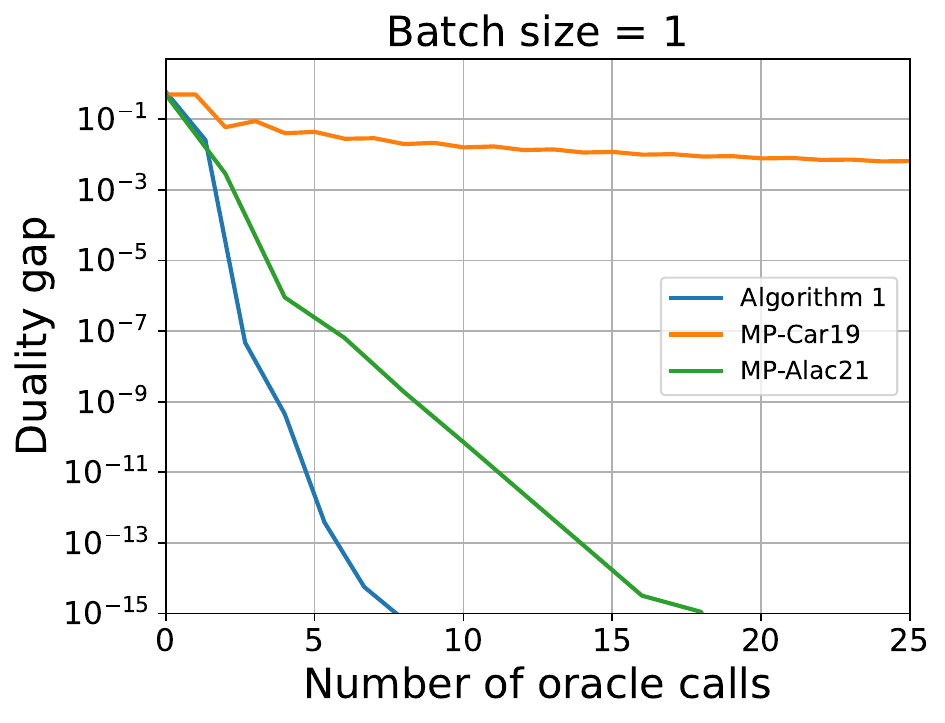}
\includegraphics[width=0.35\textwidth]{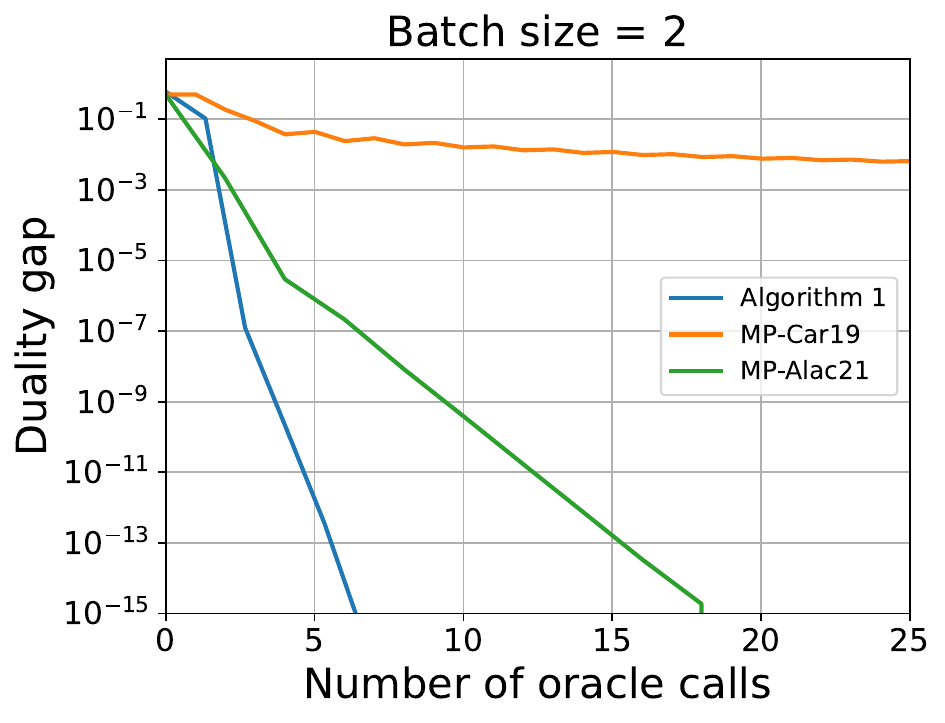}
\includegraphics[width=0.35\textwidth]{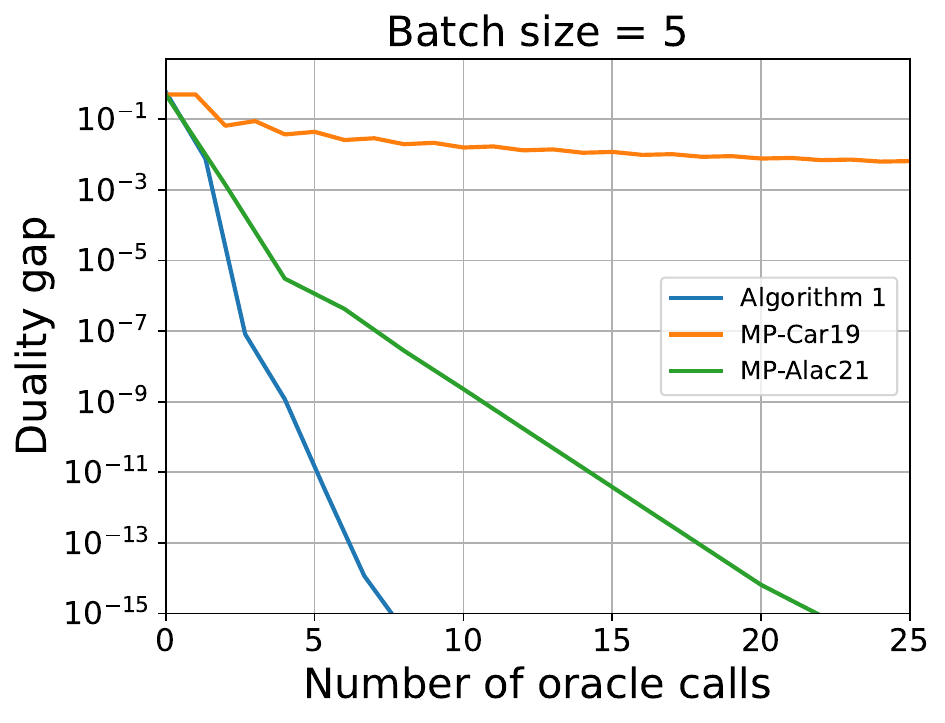}
\includegraphics[width=0.35\textwidth]{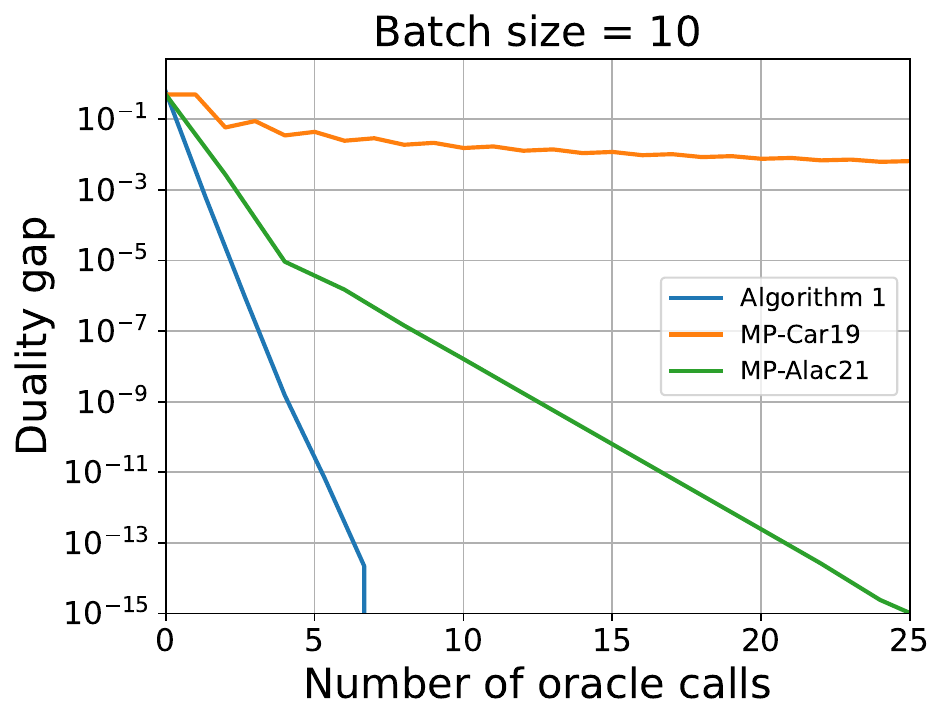}
\end{minipage}
\caption{Comparison of computational complexities for Algorithm~\ref{alg:new}(with adjusted parameters), \texttt{MP-Car19} (Algorithm 1+2 from \cite{carmon2019variance}), and \texttt{MP-Alac21} (Algorithm 2 \cite{alacaoglu2021stochastic}) with different batch sizes on first test matrix from \cite{nemirovski2013mini}.}
\label{fig:nem_updated}
\end{figure}

Additionally, we also notice that on given test matrices it is possible to improve convergence rates by adjusting parameters. Using grid search we achieve the parameters that guaranteed the best convergence. With these parameters we run algorithms and plot Figures \ref{fig:nem_updated} and \ref{fig:PB_updated}. The results of this experiment are similar: our algorithm provides better results than the competitors and the convergence rate does not depend on the batch size. 

\begin{figure}
\centering
\begin{minipage}[][][b]{\textwidth}
\centering
\includegraphics[width=0.35\textwidth]{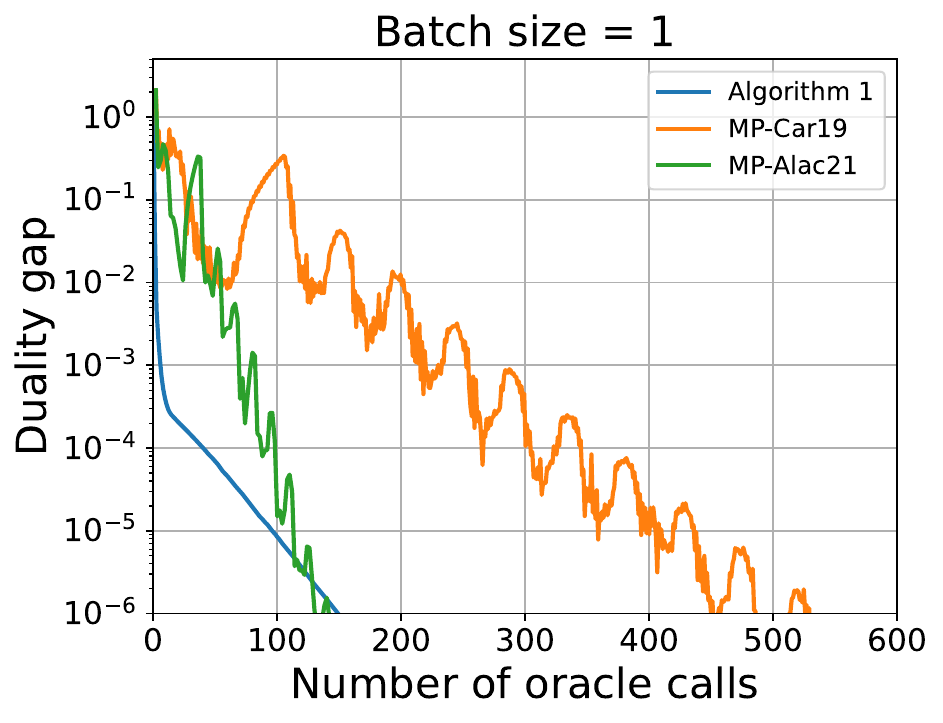}
\includegraphics[width=0.35\textwidth]{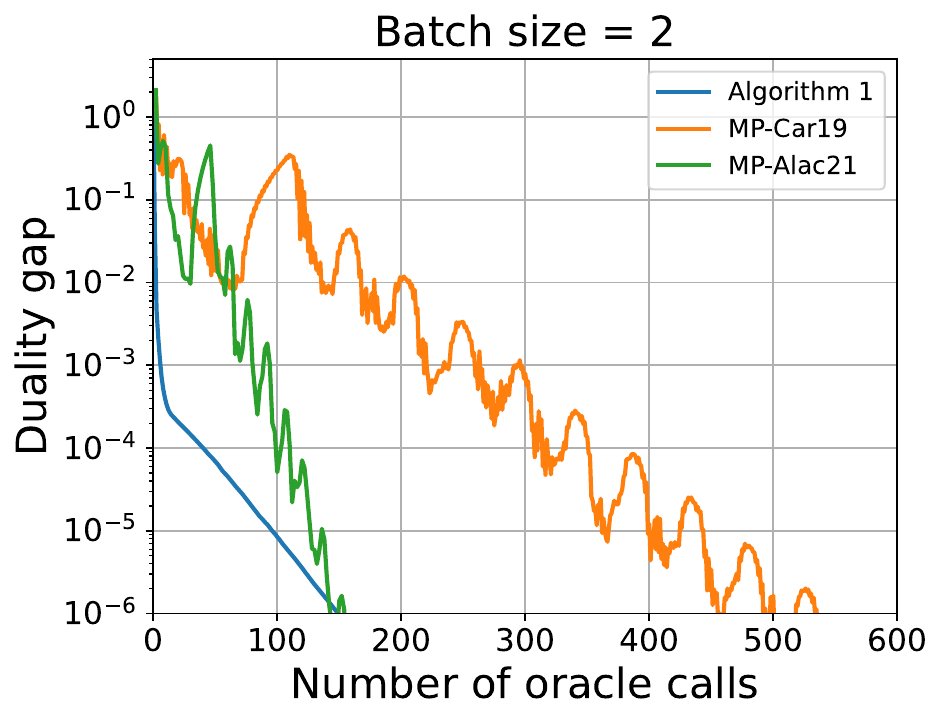}
\includegraphics[width=0.35\textwidth]{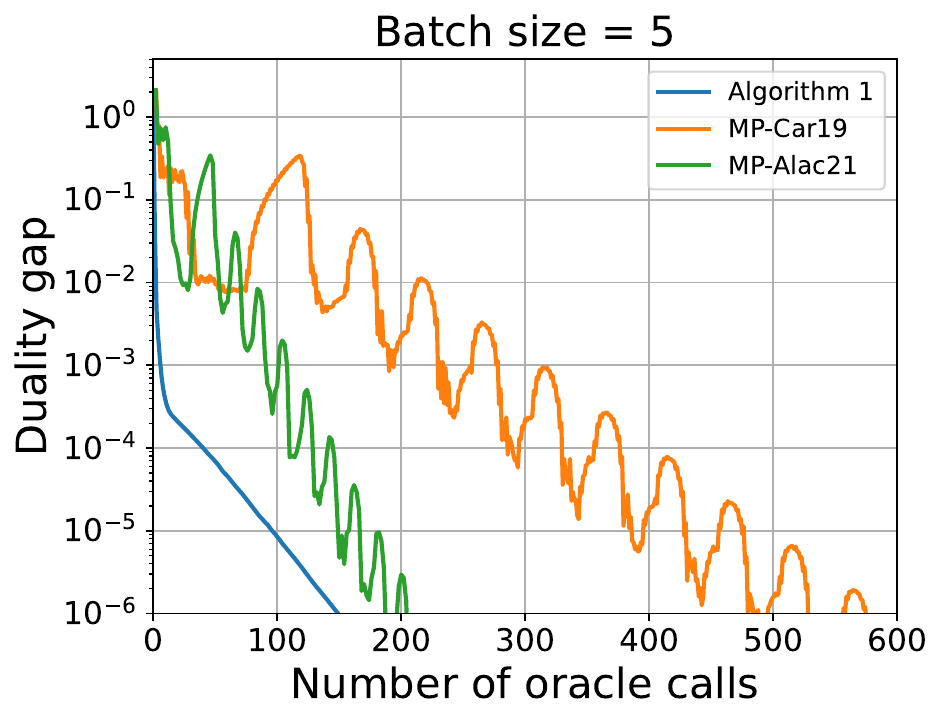}
\includegraphics[width=0.35\textwidth]{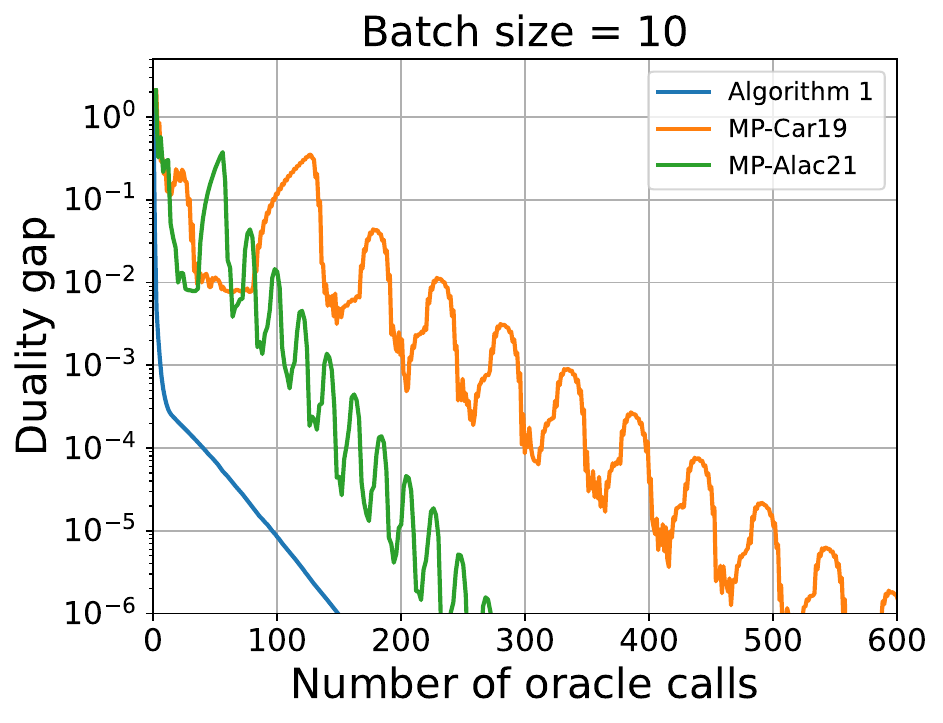}
\end{minipage}
\caption{Comparison of computational complexities for Algorithm~\ref{alg:new}(with adjusted parameters), \texttt{MP-Car19} (Algorithm 1+2 from \cite{carmon2019variance}), and \texttt{MP-Alac21} (Algorithm 2 \cite{alacaoglu2021stochastic}) with different batch sizes on policeman and burglar matrix from \cite{nemirovski2013mini}.}
\label{fig:PB_updated}
\end{figure}

\bibliographystyle{splncs04}
\bibliography{ltr}

\appendix
\section{Basic facts}

\begin{lemma} For all $u,v\in \mathbb{R}^n$ the Cauchy–Schwarz inequality holds:
\begin{align}
    |\left\langle u, v \right\rangle|^2 \leq \left\langle u, u \right\rangle\cdot \left\langle v, v \right\rangle . \label{ap:Cauchy–Schwarz}
\end{align}
\end{lemma}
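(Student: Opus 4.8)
The plan is to reduce the inequality to the nonnegativity of a real quadratic polynomial. First I would introduce the auxiliary function $\phi(t) = \langle u - tv,\, u - tv \rangle$ of a single real variable $t$, which is nonnegative for every $t \in \mathbb{R}$ since it equals a squared Euclidean norm. Expanding by bilinearity and symmetry of the inner product gives $\phi(t) = \langle v, v \rangle\, t^2 - 2\langle u, v \rangle\, t + \langle u, u \rangle$, a quadratic in $t$.

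Next I would split into two cases. If $\langle v, v \rangle = 0$, then $v = 0$, both sides of the claimed inequality vanish, and the statement holds trivially. Otherwise $\langle v, v \rangle > 0$, so $\phi$ is a genuine quadratic with positive leading coefficient that remains nonnegative on all of $\mathbb{R}$. Such a quadratic has at most one real root, so its discriminant is nonpositive: $4\langle u, v \rangle^2 - 4\langle u, u \rangle \langle v, v \rangle \leq 0$. Dividing by $4$ and rearranging yields $\langle u, v \rangle^2 \leq \langle u, u \rangle \langle v, v \rangle$, which is exactly the claim (the left-hand side equals $|\langle u, v \rangle|^2$ since the inner product is real).

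There is essentially no obstacle here; the only point demanding care is the degenerate case $v = 0$, which must be separated out before appealing to the positivity of $\langle v, v \rangle$. An equivalent and perhaps cleaner route would be to minimize $\phi$ explicitly at $t^{*} = \langle u, v \rangle / \langle v, v \rangle$ and substitute, obtaining $\phi(t^{*}) = \langle u, u \rangle - \langle u, v \rangle^2 / \langle v, v \rangle \geq 0$, which delivers the inequality directly without invoking the discriminant.
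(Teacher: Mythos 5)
Your proof is correct: the discriminant (or explicit minimization) argument for the quadratic $\phi(t)=\langle u-tv,\,u-tv\rangle$ is the classical proof of Cauchy--Schwarz, and you correctly isolate the degenerate case $v=0$ before dividing by $\langle v,v\rangle$. The paper states this lemma only as a basic fact in its appendix and offers no proof of its own, so there is nothing to compare against; your argument fully substantiates the claim as stated.
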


\begin{lemma} For all $u,v\in \mathbb{R}^n$ , for any $p>0$ and $q>0$ such that $\frac{1}{p}+\frac{1}{q}$ the Young's inequality holds:
\begin{align}
    \langle u, v\rangle\leq \frac{u^p}{p}+\frac{u^q}{q}. \label{ap:Young}
\end{align}
\end{lemma}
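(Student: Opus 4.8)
The statement as written is the classical Young's inequality, but its hypothesis (``$\frac{1}{p}+\frac{1}{q}$'') is evidently missing the equality ``$=1$'', and the right-hand side (``$\frac{u^p}{p}+\frac{u^q}{q}$'') should read $\frac{\|u\|_*^p}{p}+\frac{\|v\|^q}{q}$, since both terms cannot depend on $u$ and a vector cannot be raised to a power. Accordingly, the first thing I would do is state the intended inequality precisely: for conjugate exponents $p,q>1$ with $\frac1p+\frac1q=1$, and for the primal--dual norm pair $\|\cdot\|,\|\cdot\|_*$,
\[
\langle u,v\rangle \;\le\; \frac{\|u\|_*^{\,p}}{p}+\frac{\|v\|^{\,q}}{q}.
\]
In the body of the paper only the symmetric Euclidean case $p=q=2$ is ever invoked (e.g.\ in (\ref{lem2b:young}) and (\ref{lem:4:young})), where this reduces to $\langle u,v\rangle\le\frac12\|u\|_*^2+\frac12\|v\|^2$.

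The plan is to split the argument into a \emph{duality step} and a \emph{scalar step}. For the duality step I would invoke the defining property of the dual norm (equivalently H\"older's inequality; in the Euclidean case this is exactly Cauchy--Schwarz (\ref{ap:Cauchy–Schwarz})), which yields $\langle u,v\rangle\le\|u\|_*\,\|v\|$. This reduces the vector inequality to bounding the product $ab$ of the two nonnegative scalars $a=\|u\|_*$ and $b=\|v\|$, which is the genuine content of the lemma.

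For the scalar step I would prove $ab\le\frac{a^p}{p}+\frac{b^q}{q}$ using concavity of the logarithm (equivalently convexity of $\exp$). Discarding the trivial case $a=0$ or $b=0$, I assume $a,b>0$ and apply Jensen's inequality to $\log$ with weights $\frac1p,\frac1q$, which are admissible precisely because $\frac1p+\frac1q=1$:
\[
\log\!\Big(\tfrac{1}{p}a^p+\tfrac{1}{q}b^q\Big)\;\ge\;\tfrac{1}{p}\log a^p+\tfrac{1}{q}\log b^q\;=\;\log a+\log b\;=\;\log(ab).
\]
Exponentiating and using monotonicity of $\exp$ gives $ab\le\frac{a^p}{p}+\frac{b^q}{q}$, and composing with the duality step finishes the proof.

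There is no real obstacle here, only a caveat: the single point that must be handled carefully is the conjugacy condition $\frac1p+\frac1q=1$, which is exactly what legitimizes the Jensen weights, together with the correct placement of the dual norm on $u$ and the primal norm on $v$ so that the duality pairing is valid. I would also remark that for the symmetric case $p=q=2$ actually used in the paper, the scalar step is the elementary $ab\le\frac12 a^2+\frac12 b^2$, immediate from $(a-b)^2\ge0$, so no logarithmic argument is needed there.
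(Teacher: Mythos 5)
Your proof is correct, but note there is nothing in the paper to compare it against: the lemma is stated in the appendix as a basic fact with no proof at all, so your argument supplies exactly what the paper omits. Your diagnosis of the statement's defects is right on both counts --- the hypothesis must read $\frac{1}{p}+\frac{1}{q}=1$, and the right-hand side must be $\frac{\|u\|_*^p}{p}+\frac{\|v\|^q}{q}$, since as printed both terms depend on $u$ and a vector is raised to a power --- and the repaired form is consistent with every invocation in the body of the paper, e.g.\ in (\ref{lem2b:young}), (\ref{lem:4:young}) and (\ref{th1:res1:Young}), all of which need only the case $p=q=2$; the weighted variants appearing there, such as $2\eta^2\|\cdot\|_2^2+\frac{1}{8}\|\cdot\|_2^2$, follow from your inequality by the rescaling $u\mapsto\lambda u$, $v\mapsto v/\lambda$. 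Your two-step structure is the standard and correct one: the duality step $\langle u,v\rangle\leq\|u\|_*\,\|v\|$ (definition of the dual norm, reducing to Cauchy--Schwarz (\ref{ap:Cauchy–Schwarz}) in the Euclidean case) isolates the genuine content in the scalar bound $ab\leq\frac{a^p}{p}+\frac{b^q}{q}$, and your Jensen argument on $\log$ with weights $\frac{1}{p},\frac{1}{q}$ is airtight, including the separate dismissal of $a=0$ or $b=0$. One small remark: your restriction to $p,q>1$ is not an extra assumption, since $p,q>0$ together with $\frac{1}{p}+\frac{1}{q}=1$ forces $\frac{1}{p},\frac{1}{q}\in(0,1)$, so the lemma's stated hypothesis $p,q>0$ is already equivalent to conjugacy once the missing ``$=1$'' is restored.
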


\begin{lemma}[\cite{vandenberghe2022generalized}] For all $u$, $v$ and $w\in \mathcal{X}$ the  three point identity holds:
\begin{align}
    V(u,w) = V(u,v) + V(v,w) + \langle \nabla h(v)-\nabla h(w), u-v\rangle .
    \label{ap:3point}
\end{align}
\end{lemma}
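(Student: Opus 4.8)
The plan is to prove the identity by direct substitution of the definition $V(u,v) = h(u)-h(v)-\langle \nabla h(v), u-v\rangle$ into both sides and simplifying. Since the claim is a purely algebraic identity, no strong convexity, optimality conditions, or inequalities are involved; only bilinearity of the inner product together with cancellation of the function values of $h$ is needed.

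First I would expand the left-hand side directly from the definition,
\[
V(u,w) = h(u)-h(w)-\langle \nabla h(w), u-w\rangle .
\]
Then I would expand the three terms on the right-hand side, writing $V(u,v)=h(u)-h(v)-\langle \nabla h(v),u-v\rangle$ and $V(v,w)=h(v)-h(w)-\langle \nabla h(w),v-w\rangle$, and splitting the cross term by bilinearity as $\langle \nabla h(v)-\nabla h(w),u-v\rangle = \langle \nabla h(v),u-v\rangle - \langle \nabla h(w),u-v\rangle$.

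Next I would collect terms by type. The function values telescope, $h(u)-h(v)+h(v)-h(w)=h(u)-h(w)$, reproducing the $h$-part of the left-hand side. The two copies of $\langle \nabla h(v),u-v\rangle$ — one carried with a minus sign from $V(u,v)$ and one with a plus sign from the cross term — cancel exactly. Finally, the inner products against $\nabla h(w)$ combine by additivity in the second slot, $-\langle \nabla h(w),v-w\rangle-\langle \nabla h(w),u-v\rangle = -\langle \nabla h(w),(u-v)+(v-w)\rangle = -\langle \nabla h(w),u-w\rangle$, which matches the last term of the expanded left-hand side. Hence the two sides agree.

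There is no genuine obstacle in this argument; the only care required is correct bookkeeping of signs, together with recognizing that the presence of the $\nabla h(v)$ cross term is precisely what cancels the $\langle \nabla h(v),u-v\rangle$ contributions. The identity needs only differentiability of $h$ on $\dom h$, not convexity, and specializes, for $h(x)=\tfrac12\|x\|_2^2$ with $V(u,v)=\tfrac12\|u-v\|_2^2$, to the elementary expansion $\tfrac12\|u-w\|_2^2 = \tfrac12\|u-v\|_2^2 + \tfrac12\|v-w\|_2^2 + \langle v-w,u-v\rangle$, which serves as a convenient sanity check.
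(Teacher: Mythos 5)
Your proof is correct: the direct expansion of all three Bregman distances by definition, with the telescoping of the $h$-values and the cancellation of the $\langle \nabla h(v), u-v\rangle$ terms, is exactly the standard argument, and the paper itself gives no proof (it only cites \cite{vandenberghe2022generalized}, where this same computation appears). Your added remarks — that convexity plays no role and that the Euclidean case $h(x)=\tfrac12\|x\|_2^2$ recovers the parallelogram-type expansion — are both accurate and serve as a sound sanity check.
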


\end{document}